\DeclareMathOperator{\St}{St}
\DeclareMathOperator{\Rep}{Rep}
\DeclareMathOperator{\Perv}{Perv}
\DeclareMathOperator{\LInd}{LInd}
\DeclareMathOperator{\nInd}{nInd}
\DeclareMathOperator{\Av}{Av}
\DeclareMathOperator{\triv}{triv}
\newcommand{\ceil}[1]{{\lceil #1 \rceil}}
\newcommand{\floor}[1]{{\lfloor #1 \rfloor}}
\newcommand{\ubar}[1]{\underline{#1}}
\renewcommand{\mod}{\operatorname{mod}}
\renewcommand{\supp}{\operatorname{supp}}
\title[Comparing complex and $p$-adic $\GL_n$]{On a geometric comparison of representations of complex and $p$-adic $\GL_n$}
\author{Taiwang Deng}
\address{Beijing Institute of Mathematical Sciences and Applications, Huairou District, Beijing 101408, China}
\email{dengtaiw@bimsa.cn}
\thanks{TD was supported by NSFC grant No. 12401013 and BJNSF grant No.1244042}
\author{Chang Huang}
\address{Department of Mathematics \\ Tsinghua University, Haidian District, Beijing 100084, China}
\email{hc21@mails.tsinghua.edu.cn}
\author{Bin Xu}
\address{Yau Mathematical Sciences Center and Department of Mathematics \\  Tsinghua University, Haidian District, Beijing 100084, China}
\email{binxu@tsinghua.edu.cn}
\thanks{BX was supported by Ministry of Science and Technology of China grant No. 2021YFA1000700}
\author{Qixian Zhao}
\address{Yau Mathematical Sciences Center \\ Tsinghua University, Haidian District, Beijing 100084, China}
\email{zhao{\_}qixian@tsinghua.edu.cn}
\date{March 17, 2026}
\begin{document}

\begin{abstract}
	In this paper, we use geometric methods to study the relations between admissible representations of $\GL_n(\BC)$ and unramified representations of $\GL_m(\BQ_p)$. We show that the geometric relationship between Langlands parameter spaces of $\GL_n(\BC)$ and $\GL_m(\BQ_p)$ constructed by the first named author is compatible with the functor recently defined algebraically by Chan-Wong. We then show that the said relationship intertwines translation functors on representations of $\GL_n(\BC)$ and partial Bernstein-Zelevinskii derivatives on representations of $\GL_m(\BQ_p)$, providing purely geometric counterparts to some results of Chan-Wong. In the sequels, the techniques of this work will be extended to real and $p$-adic classical groups and used to study their Arthur packets. 
\end{abstract}

\maketitle

\tableofcontents

\section{Introduction}

In a recent work of Chan-Wong \cite{Chan-Wong:GLn}, the authors construct algebraically an exact functor $\Gamma_{n,m}$ from representations of $\GL_n(\BC)$ to those of $\GL_m(\BQ_p)$ (this is recalled briefly in \textsection \ref{subsec:alg-comparison}). Further, they showed that their functor plays well with certein translation functors on $\GL_n(\BC)$ and Bernstein-Zelevinskii derivatives on $\GL_m(\BQ_p)$. In other words, the following diagram commutes 
\begin{equation}\label{diag:rep-square}
	\begin{tikzcd}
		\Rep^I(\GL_m(\BQ_p)) \ar[d, "\text{BZ derivative}"'] 
		& \Rep(\GL_n(\BC)) \ar[l, "\Gamma_{n,m}"'] \ar[d, "\text{translation}"]\\
		\Rep^I(\GL_{m'}(\BQ_p))
		& \Rep(\GL_n(\BC)) \ar[l, "\Gamma_{n,m'}"]
	\end{tikzcd}.
\end{equation}
Here $\Rep(\GL_n(\BC))$ stands for the category of Harish-Chandra modules attached to the pair $(\fgl_n(\BC), \bU(n))$, and $\Rep^I(\GL_m(\BQ_p))$ stands for the Iwahori component of the category of smooth representations of $\GL_m(\BQ_p)$.

The upshot of this paper is to present a purely geometric construction on the Langlands dual side that overlaps with the above result on the level of Grothendieck groups.

Let us explain what this means. Recall from the work of Adams-Barbasch-Vogan \cite{ABV} that the Langlands correspondence for a real group $\check G_\BR$ at an infinitesimal character $\Lambda_\BR$ can be roughly phrased as a perfect pairing between Grothendieck groups
\begin{equation*}
	K \Rep(\check G_\BR)_{\Lambda_\BR} \times K \Perv( \cY(\Lambda_\BR), G_{\Lambda_\BR}) \aro \BZ,
\end{equation*}
see \textsection \ref{subsec:LLC-GLnC}. Here $\cY(\Lambda_\BR)$ is the so called \textit{ABV space}, a smooth projective variety over $\BC$ equipped with an action of the endoscopic group $G_{\Lambda_\BR}$ of $\Lambda_\BR$, so that the set of $G_{\Lambda_\BR}$-orbits parameterize conjugacy classes of Langlands paremeters of representations of $\check G_\BR$ with infinitesimal character $\Lambda_\BR$. The category $\Perv( \cY(\Lambda_\BR), G_{\Lambda_\BR})$ is the category of $G_{\Lambda_\BR}$-equivariant perverse sheaves on $\cY(\Lambda_\BR)$, and $K$ denotes the Grothendieck group. This rough statement is in fact true on the nose for $\check G_\BR = \GL_n(\BC)$. This pairing is compatible with the translation functors of a certain type in the sense that 
they are adjoint to pullback-pushforward functors on sheaves on the ABV spaces, see \textsection\ref{subsec:translation}.

In \cite{Vogan:p-adic-Langlands} Vogan made a similar conjecture on the $p$-adic side. Roughtly speaking, the Langlands correspondence for a $p$-adic group $\check H$ at an infinitesimal character $\Lambda_p$ should be phrased as a perfect pairing
\begin{equation*}
	K \Rep(\check H)_{\Lambda_p} \times K \Perv(E_{\Lambda_p}, H_{\Lambda_p}) \aro \BZ,
\end{equation*}
see \textsection \ref{subsec:LLC/Qp}, where $E_{\Lambda_p}$ is the so called \textit{Vogan variety}, a conical affine variety playing a similar role as the ABV space. It is equipped with an action of $H_{\Lambda_p}$ with finitely many orbits, where $H_{\Lambda_p}$ is the centralizer of $\Lambda_p$ in the Langlands dual group of $\check H$. In the case where $\check H = \GL_m(\BQ_p)$, this conjectural statement was first formulated by Zelevinskii \cite{Zelevinskii} and proven by Ariki \cite{Ariki} and Chriss-Ginzburg \cite{Chriss-Ginzburg}. By the recent work of the first named author \cite{Deng:BZ}, certain partial Bernstein-Zelevinskii derivatives on $\Rep(\GL_m(\BQ_p))$ are adjoint to Lusztig's inductions on sheaves on $E_{\Lambda_p}$ (this is recalled in \textsection \ref{subsec:Lusztig-ind}).

Therefore, under the Langlands correspondences, the adjoints of the two vertical arrows in (\ref{diag:rep-square}) have explicit geometric descriptions. Finally, the first named author  constructed in \cite{Deng:symm} a geometric comparison between the Langlands parameter spaces $\cY(\Lambda_\BR)$ for $\GL_n(\BC)$ and $E_{\Lambda_p}$ for $\GL_m(\BQ_p)$. More precisely, there exists an open immersion of quotient stacks $[E_{\Lambda_p}/(\GL_m)_{\Lambda_p}] \hookleftarrow [\cY(\Lambda_\BR)/ (\GL_n(\BC)_\BC)_{\Lambda_\BR}]$ whenever the pairs $(n, \Lambda_\BR)$ and $(m, \Lambda_p)$ satisfy certain compatibility conditions (\textsection \ref{subsec:geom-comparison}). In particular, we have a pullback functor from the equivariant derived category on $E_{\Lambda_p}$ to the equivariant derived category of $\cY(\Lambda_\BR)$. We thus obtain a diagram
\begin{equation}\label{diag:param-square}
	\begin{tikzcd}
		D^b( E_{\Lambda_p}, \GL_{m,\Lambda_p}) \ar[r,"\text{pullback}"] 
		& D^b( \cY(\Lambda_\BR) , \GL_{n,\Lambda_\BR}) \\
		D^b( E_{\Lambda_p}', \GL_{m',\Lambda_p})  \ar[u, "\text{Lusztig induction}"] \ar[r,"\text{pullback}"]
		& D^b( \cY(\Lambda_\BR') , \GL_{n,\Lambda_\BR'}) \ar[u, "\text{push-pull}"']
	\end{tikzcd}.
\end{equation}
The goal of this paper is two fold. First, we show the two horizontal arrows in (\ref{diag:param-square}) are dual to the horizontal arrows in (\ref{diag:rep-square}) on the level of Grothendieck groups under Langlands correspondence. Second, we show the diagram (\ref{diag:param-square}) commutes. See \S \ref{subsec:main-results} below for more precise statements. 

The geometric nature of our construction allows us to compare finer structures on the parameter spaces. In particular, the pullback functor from $E_{\Lambda_p}$ to $\cY(\Lambda_\BR)$ preserves microlocal data of perverse sheaves in the sense of \cite[Proposition 20.2]{ABV}. On the real side, such data encodes information of Arthur packets (see \cite{ABV} and \cite{Adams-Arancibia-Mezo}), and a similar statement has been conjectured for $p$-adic groups \cite{Cunningham-etal:microlocal}. For general linear groups this observation does not have new representation theoretic implications since Arthur packets for general linear groups are rather simple. For real unitary groups and $p$-adic symplectic and orthogonal groups, however, our approach already allows us to say something interesting, see \textsection \ref{subsec:U-groups} below. In fact this is one of our main motivations for the current project.

The idea of relating real and $p$-adic groups by the geometry of their Langlands parameter spaces is not new. Earlier works in this direction include Lusztig-Zelevinskii \cite{Zelevinskii:RvsQp}, Ciubotaru-Trapa \cite{Ciubotaru-Trapa:duality}, and Barchini-Trapa \cite{Barchini-Trapa}. In particular, Barchini-Trapa constructed, for each reductive group $G$ of classical or certain exceptional types, a map from a Vogan variety attached to $G(\BQ_p)$ to an ABV space attached to $G(\BR)$. Our construction goes in the other direction, where the ABV space of a real group is mapped into the Vogan variety of a $p$-adic group of larger rank. It is worth noting that our map is an open immersion of stacks, which in particular leads to a bijection on the sets of local systems on a given orbit in the ABV space and those on the corresponding orbit in the Vogan variety. In representation theoretic terms, this means we have a bijection between the corresponding L-packets, a property that sometime fails in the approach of Barchini-Trapa.

\subsection{Main results}\label{subsec:main-results}

Let us state the main results in simplified form. 

Fix $m$ and let $\Lambda_p$ be an unramified infinitesimal character of $\check H = \GL_m(\BQ_p)$. Up to conjugacy, it is completely determined by multiplicities of each eigenvalue $p^a$ of $\Lambda_p(\ff)$, where $\ff$ is a fixed lift of the arithmetic Frobenius. Write $\varphi$ for the function that sends $a \in \BC$ to the multiplicity of the eigenvalue $p^a$ of $\Lambda_p(\ff)$. For the sake of the introduction, assume $\varphi$ is supported on $\BZ$. In other words, the eigenvalues of $\Lambda_p(\ff)$ are of the form $p^a$, $a \in \BZ$. Assume moreover that there exists a number $r' \in \BZ$ so that $\varphi$ is weakly increasing on $(-\infty, r'+1]$ and weakly decreasing on the interval $[r', \infty)$ (see Assumption \ref{assump:r}\footnote{The number $r'$ here is equal to the number $\floor r$ in \ref{assump:r}.}). Let $n = \varphi(r')$. Then there exists an integral infinitesimal character $\Lambda_\BR$ of $\GL_n(\BC)$ and an open immersion
\begin{equation*}
	\zeta: [\cY(\Lambda_\BR)/ \GL_n(\BC)_\BC] \injects [E_{\Lambda_p} / (\GL_m)_{\Lambda_p}]
\end{equation*}
see \cite{Deng:symm} or Corollary \ref{cor:zeta}. Note that if $\Lambda_\BR = (\lambda_L, \lambda_R) \in \BC^n \oplus \BC^n$ in standard notations (see \ref{cls:RT-notations-GLn}), the parameter space $[\cY(\Lambda_\BR)/ \GL_n(\BC)_\BC]$ can be written as $[P_{\lambda_L} \backslash \GL_n / P_{\lambda_R}]$ where $P_{\lambda_L}$ (resp. $P_{\lambda_R}$) is the upper-triangular parabolic subgroup of $\GL_n$ determined by simple roots orthogonal to $\lambda_L$ (resp. $\lambda_R$). In particular, we have a pullback functor
\begin{equation*}
	\zeta^* : D^b(E_{\Lambda_p}, (\GL_m)_{\Lambda_p}) \aro D^b(\GL_n, P_{\lambda_L} \times P_{\lambda_R}) .
\end{equation*}

Let us first state our second result. Suppose $\varphi$ can be decomposed as $\varphi = \psi + c[k]$, where $c \ge 0$ and $k > r'+1$ are integers, $[k]: \BC \to \BN$ is the characteristic function at $k$, and $\psi: \BC \to \BN$ is a function constructed similar to $\varphi$ for some infinitesimal character $\Lambda_p'$ of $\GL_{m-c}(\BQ_p)$ so that $\psi$ is also weakly increasing on $(-\infty, r'+1]$ and weakly decreasing on $[r',\infty)$. Then we have another map
\begin{equation*}
	\zeta': [P_{\lambda_L'} \backslash \GL_n / P_{\lambda_R'}] = [\cY(\Lambda_\BR')/ \GL_n(\BC)_\BC] \injects [E_{\Lambda_p'} / (\GL_{m-c})_{\Lambda_p'}]
\end{equation*}
for some other real infinitesimal character $\Lambda_\BR' = (\lambda_L', \lambda_R')$ of $\GL_n(\BC)$ and some upper-triangular parabolics $P_{\lambda_L'}$, $P_{\lambda_R'}$, and hence another pullback functor
\begin{equation*}
	(\zeta')^* : D^b(E_{\Lambda_p'}, (\GL_{m-c})_{\Lambda_p'}) \aro D^b(\GL_n, P_{\lambda_L'} \times P_{\lambda_R'}).
\end{equation*}
On the $p$-adic side, Lusztig has defined an induction functor on sheaves (see \ref{cls:Lind} or \cite{Lusztig:quiver})
\begin{equation*}
	\LInd_{\psi}^\varphi\footnotemark: D^b(E_{\Lambda_p'}, (\GL_{m-c})_{\Lambda_p'}) \aro D^b(E_{\Lambda_p}, (\GL_m)_{\Lambda_p}).
\end{equation*}
\footnotetext{The functor $\LInd_\psi^\varphi$ here is the same as $\LInd_{\psi, c[k]}^\varphi \circ \operatorname{Infl}_1^{G_{c[k]}}$ in the body of the paper, where $\LInd_{\psi, c[k]}^\varphi$ is defined in \ref{cls:Lind} and $\operatorname{Infl}$ is the inflation functor.}%
On the real side, let $Q_l = P_{\lambda_L} \cap P_{\lambda_L'}$ and $Q_r = P_{\lambda_R} \cap P_{\lambda_R'}$\footnote{Because of an unfortunate conventional reason, the group $Q_l$ (resp. $Q_r$) here is in fact denoted by $Q_r$ (resp. $Q_l$) in the body of the paper.}. Then we have natural maps
\begin{equation*}
	[P_{\lambda_L'} \backslash \GL_n / P_{\lambda_R'}] \laro [Q_l \backslash \GL_n / Q_r] \aro [P_{\lambda_L} \backslash \GL_n / P_{\lambda_R}]
\end{equation*}
and hence a pullback-pushforward functor
\begin{equation*}
	\bI_{\Lambda_\BR'}^{\Lambda_\BR}: D^b(\GL_n, P_{\lambda_L'} \times P_{\lambda_R'}) \aro D^b(\GL_n, P_{\lambda_L} \times P_{\lambda_R}).
\end{equation*}
Some important special cases and examples of such $\varphi$, $\psi$, $\Lambda_\BR$ and $\Lambda_\BR'$ are discussed in \ref{cls:reg-to-sing}-\ref{cls:general_ex}.

\begin{theorem}\label{thm:trans-der}
	The following diagram commutes
	\begin{equation*}
		\begin{tikzcd}
			D^b(E_{\Lambda_p}, (\GL_m)_{\Lambda_p}) \ar[r, "\zeta^*"]
			& D^b(\GL_n, P_{\lambda_L} \times P_{\lambda_R})
			\\
			D^b(E_{\Lambda_p'}, (\GL_{m-c})_{\Lambda_p'}) \ar[r, "(\zeta')^*"] \ar[u, "\LInd_{\psi}^\varphi"]
			&  D^b(\GL_n, P_{\lambda_L'} \times P_{\lambda_R'}) \ar[u, "\bI_{\Lambda_\BR'}^{\Lambda_\BR}"']
		\end{tikzcd}.
	\end{equation*}
	By taking adjoints, we obtain the following commutative diagram 
	\begin{equation*}
		\begin{tikzcd}
			K\Rep_{\Lambda_p}(\GL_m(\BQ_p)) \ar[d, "{}^kD"']
			& K\Rep_{\Lambda_\BR}(\GL_n(\BC)) \ar[l]  \ar[d, "T_{\Lambda_\BR}^{\Lambda_\BR'}"]
			\\
			K\Rep_{\Lambda_p'}(\GL_{m-c}(\BQ_p)) 
			& K\Rep_{\Lambda_\BR'}(\GL_n(\BC)) \ar[l]
		\end{tikzcd}
	\end{equation*}
	where ${}^kD$ is the partial Bernstein-Zelevinskii derivative restricted to $K\Rep_{\Lambda_p}(\GL_m(\BQ_p))$.
\end{theorem}

This appears as Corollary \ref{cor:compare-functors}. There is an analogous statement in the case where $\varphi = c[k] + \psi'$ with $k < r$, see \S \ref{subsec:left-to-right}. Although our requirements on $\Lambda_p$ and $\Lambda_p'$ seem rather strict on first sight, they are met in the case we are most interested in, see \S \ref{subsec:U-groups}.

We now turn to the first result. Instead of fixing $m$, we fix $n$ instead, and let 
\begin{equation*}
	\Lambda_\BR = (\lambda_L, \lambda_R) = (\lambda_{L,1},\ldots, \lambda_{L,n}, \lambda_{R,1},\ldots , \lambda_{R,n}) \in \BC^n \oplus \BC^n
\end{equation*}
be an integral dominant infinitesimal character for $\GL_n(\BC)$ (viewed as a real group) so that $\min_i\{\operatorname{Re} \lambda_{L,i}\} > \max_i \{\operatorname{Re} \lambda_{R,i}\} +1$ (any integral dominant $\Lambda_\BR$ can be transformed into this form by adding a twist by the determinant character). Write
\begin{align*}
	m &:= \sum_i (\lambda_{L,i} - \lambda_{R,i})\\ 
	\bm &:= \sum_i \big[ \lambda_{R,i} + \tfrac12, \lambda_{L,i} - \tfrac12 \big].
\end{align*}
Here each $\big[ \lambda_{R,i} + \tfrac12, \lambda_{L,i} - \tfrac12 \big]$ is a \textit{segment} in the sense of Zelevinskii \cite{Zelevinskii}, that is a closed interval in a $\BZ$-coset in $\BC$ from $\lambda_{R,i} + \tfrac12$ to $\lambda_{L,i} - \tfrac12$, and $\bm$ is a formal sum of segments, called a \textit{multisegment}. Each multisegment corresponds to an orbit in the Vogan variety of $\GL_m(\BQ_p)$. Let $\varphi: \BC \to \BN$ be the function that records the multiplicity of each number appearing in $\bm$. Then $\varphi$ comes from an infinitesimal character $\Lambda_p$, it satisfies the increasing-decreasing assumption for some number $r'$ in the sense explained above, and the fixed number $n$ equals $\varphi(n)$ (this function $\varphi$ is supported inside a $\BZ$-coset in $\BC$ which may not be equal to $\BZ$, but this affects nothing). 

In this setup, the exact functor $\Gamma_{n,m}$ constructed by Chan-Wong \cite{Chan-Wong:GLn} restricts to 
\begin{equation*}
	\Gamma_{n,m}: K \Rep(\GL_n(\BC))_{\Lambda_\BR} \injects K \Rep^I(\GL_m(\BQ_p))_{\Lambda_p}.
\end{equation*}
On the other hand, we have the pullback functor $\zeta^*$. Our second result is:

\begin{theorem}
	Consider the perfect pairings
	\begin{equation}
		K \Rep (\GL_n(\BC))_{\Lambda_\BR} \times K D^b(\GL_n, P_{\lambda_L} \times P_{\lambda_R}) \aro \BZ \tag{\ref{eqn:LLC-GLn-pairing}}
	\end{equation}
	\begin{equation}
		K \Rep(\GL_m(\BQ_p))_{\Lambda_p} \times K D^b(E_{\Lambda_p}, (\GL_m)_{\Lambda_p}) \aro \BZ \tag{Proposition \ref{prop:LLC-GLmQp}}
	\end{equation}
	given by the local Langlands correspondences. Up to a normalization, the functors $\Gamma_{n,m}$ and $\zeta^*$ are adjoint to each other, that is
	\begin{equation*}
		\langle \Gamma_{n,m} M, \cF\rangle =  \langle M,  \zeta^* \cF \rangle
	\end{equation*}
	for any $M \in K \Rep(\GL_n(\BC))_{\Lambda_\BR}$ and any $\cF \in K D^b(E_{\Lambda_p}, (\GL_m)_{\Lambda_p})$.
\end{theorem}

This appears as Theorem \ref{thm:alg-vs-geom}. By ``normalization'', we mean the map $\zeta$ should in fact be replaced by $\zeta \circ (\tau \times \tau)$\footnote{In Theorem \ref{thm:alg-vs-geom}, the map $\zeta \circ (\tau \times \tau)$ is denoted by $\imath$.}, where $\tau \times \tau : [P_{\lambda_L} \backslash \GL_n / P_{\lambda_R}] \bij [P_{\tau(\lambda_L)} \backslash \GL_n / P_{\tau(\lambda_R)}]$ is an isomorphism induced by the involution on the Cartan of $\GL_n$ given by $-w_0$.

\subsection{Motivation: unitary groups}\label{subsec:U-groups}

We outline a conjectural relation between certain Arthur-Vogan packets of real unitary groups and $p$-adic orthogonal/symplectic groups. The proof will be contained in the sequel of this paper \cite{DHXZ:U}.

Let us describe the packet on the real side. For a positive integer $n$, let $\check G_\BR$ be the unitary group $\bU(\frac{n}{2}, \frac{n}{2})$ if $n$ is even or $\bU(\frac{n-1}{2}, \frac{n+1}{2})$ if $n$ is odd. Via base change, we identify the set $\Psi(\check G_\BR)$ of Arthur parameters of $\check G_\BR$ with the subset of conjugate self-dual parameters of parity $(-1)^{n}$ in the set $\Psi(\GL_n(\mathbb{C}))$ of Arthur parameters of $\GL_n(\BC)$. For $\psi^\BR \in \Psi(\check G_\BR)$, its base change $BC(\psi^\BR)$ can be viewed as an $n$-dimensional continuous representation of $\BC^\times \times \SL_2(\BC)$. Suppose it has good parity, i.e. it takes the form
\begin{equation*}
	BC(\psi^{\mathbb{R}}) = \bigoplus_{i = 1}^{r} \, (z/\bar{z})^{\frac{k_i}{2}} \boxtimes S_{m_i}
\end{equation*}
where $k_i, m_i \in \mathbb{Z}$ and $n \equiv k_i + m_i$ mod $2$, $z$ denotes an element of $\BC^\times$, and $S_{m_i}$ denotes the $m_i$-dimensional irreducible representation of $\SL_2(\BC)$. By twisting $BC(\psi^{\mathbb{R}})$ with the character $(z/\bar{z})^{l}$ of $\mathbb{C}^{\times}$, we may further assume $k_i > m_i - 1$. Define Arthur-Vogan packet to be
\begin{equation*}
	\Pi_{\psi^\BR}^{\text{A-V}} := \bigsqcup_{p + q = n} \Pi_{\psi^\BR}^{\bU(p,q)}
\end{equation*}
where $\Pi_{\psi^\BR}^{\bU(p,q)}$ is the Arthur packet of $\bU(p,q)$ corresponding to the parameter $\psi^\BR$. 

On the $p$-adic side, let $N = \sum_{i = 1}^{r} (k_i + 1)m_i$, and let 
\begin{equation*}
	\check H = 
	\begin{cases}
		\Sp(N-1) & \text{ if $n$ is odd; } \\
		\SO(N+1) & \text{ if $n$ is even }
	\end{cases}
\end{equation*}
(note that $N \equiv n$ mod $2$). Through the standard representation of $H$ ($H$ is the Langlands dual group of $\check H$), we identify the set $\Psi(\check H)$ of Arthur parameters of $\check H$ with the subset of self-dual parameters of orthogonal/symplectic type in the set $\Psi(\GL_N(\BQ_p))$ of Arthur parameters of $\GL_N(\BQ_p)$. Consider the parameter
\begin{equation*}
	\psi^{p} = \bigoplus_{i = 1}^{r} \mathbf{1}_{W_{\BQ_p}} \boxtimes S_{k_i + 1} \boxtimes S_{m_i}
\end{equation*}
where $\mathbf{1}_{W_{\mathbb{Q}_p}}$ is the trivial representation of the Weil group $W_{\BQ_p}$. Then $\psi^{p} \in \Psi(\check H)$ and has good parity. Define the corresponding Arthur-Vogan packet to be
\begin{equation*}
	\Pi_{\psi^{p}}^{\text{A-V}} = 
	\begin{cases} 
		\Pi_{\psi^p}^{\check H} & \text{ if $n$ is odd; } \\
		\Pi_{\psi^p}^{\check H} \sqcup \Pi_{\psi^p}^{\check H_{an}} & \text{ if $n$ is even. }
	\end{cases} 
\end{equation*}
Here $\check H_{an}$ is the non-split inner form of $\check H$.

\begin{conjecture}\label{conj: comparison of A-packets}
	When $n$ is even, there is a bijection 
	\begin{equation*}
		\Pi_{\psi^\BR}^{\text{A-V}} \bijects \Pi_{\psi^p}^{\text{A-V}}.
	\end{equation*}
	When $n$ is odd, there is a bijection
	\begin{equation*}
		\bigsqcup_{\substack{%
				p + q = n\\%
				p \equiv \frac{n-1}{2} \, {\rm mod} \, 2%
			}} 
		\Pi_{\psi^\BR}^{\bU(p,q)} \bijects \Pi_{\psi^{p}}^{\text{A-V}}.
	\end{equation*}
\end{conjecture}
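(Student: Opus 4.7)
The plan is to deduce the conjecture from a classical-group analog of the geometric open immersion of parameter stacks constructed in this paper, combined with the microlocalization description of Arthur packets of Adams-Barbasch-Vogan on the real side and the conjectural p-adic analog of Cunningham et al. The strategy mirrors the $\GL$ case treated above: the Arthur-Vogan packet on each side is identified with a set of irreducible perverse sheaves on the relevant Langlands parameter variety characterized by a microlocal condition at the orbit of the parameter; an open immersion of parameter stacks preserves such sheaves and their microlocal data, and therefore produces the bijection.

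Concretely, I would first construct an open immersion of quotient stacks
\begin{equation*}
[E_{\Lambda_p}/H_{\Lambda_p}] \hookleftarrow [\cY(\Lambda_\BR)/G_{\Lambda_\BR}]
\end{equation*}
extending the one of \cite{Deng:symm} from $\GL$ to the pairs (real unitary, p-adic orthogonal/symplectic). The identity $N = \sum_i (k_i+1)m_i$ and the normalization $k_i > m_i - 1$ built into the statement are precisely the matching condition between the two infinitesimal characters, so this should be obtained from the ambient $\GL_N$ open immersion by imposing on both sides the compatible self-duality involution that cuts out the classical group, with the desired open immersion arising as the induced map on the fixed-point substacks. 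Next I would check that the $H_{\Lambda_p}$-orbit in $E_{\Lambda_p}$ attached to $\psi^p$ is the preimage of the $G_{\Lambda_\BR}$-orbit in $\cY(\Lambda_\BR)$ attached to $\psi^\BR$; this is a bookkeeping step in the Jordan-data parametrization once the orbit labels on both sides are lined up using the explicit shapes of $BC(\psi^\BR)$ and $\psi^p$. Finally, by ABV and Adams-Arancibia-Mezo \cite{Adams-Arancibia-Mezo}, $\Pi_{\psi^\BR}^{\text{A-V}}$ is in bijection with the irreducible perverse sheaves in $\Perv(\cY(\Lambda_\BR), G_{\Lambda_\BR})$ whose characteristic cycle has full support along the conormal to this orbit; modulo the conjecture of \cite{Cunningham-etal:microlocal} on the p-adic side, $\Pi_{\psi^p}^{\text{A-V}}$ admits the same description in $\Perv(E_{\Lambda_p}, H_{\Lambda_p})$. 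Pullback along the open immersion is fully faithful on irreducible perverse sheaves supported in the image and preserves microlocal data, hence induces the desired bijection. The even/odd dichotomy should emerge from the component-group structure: for $n$ even the centralizer of $\psi^p$ has an extra $\BZ/2$ factor pairing with the two inner forms of $\check H$, whereas for $n$ odd this factor is absent and only the inner forms $\bU(p,q)$ with $p \equiv \tfrac{n-1}{2} \bmod 2$ contribute.

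The main obstacle is twofold. First, while the $\GL$ construction of the open immersion in \cite{Deng:symm} is explicit, extending it to classical groups requires specifying the right pair of self-duality involutions on the two sides; because the passage from $n$-dimensional parameters on the real side to $N$-dimensional parameters on the p-adic side is already nontrivial in the $\GL$ case, pinning down the correct matching of involutions on the underlying vector spaces is the main technical novelty, and is where the orthogonal-versus-symplectic dichotomy for $\check H$ will enter. Second, the p-adic microlocal description of Arthur packets is still conjectural for classical groups, so to complete the argument one would either need to verify it in the cases at hand, perhaps by reducing to the $\GL_N$ statement via the base change of $\psi^p$, or bypass it by replacing the sheaf-theoretic identification on the p-adic side with a parabolic-induction computation using the Lusztig-induction compatibility of \S\ref{subsec:Lusztig-ind} together with the translation/BZ-derivative square of Theorems \ref{thm:compare-functors} and \ref{thm:compare-functors:left}.
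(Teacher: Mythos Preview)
The paper does not prove this statement; it is stated as a conjecture, with the proof deferred to a sequel. However, the paper does sketch its intended strategy, and that strategy differs substantially from yours.

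The paper's outlined approach is representation-theoretic rather than microlocal. In the regular infinitesimal character case, $\Pi_{\psi^\BR}^{\bU(p,q)}$ is an Adams--Johnson packet and can be computed explicitly by cohomological induction, while $\Pi_{\psi^p}^{\text{A-V}}$ has been computed explicitly by Moeglin; the bijection is then verified by direct comparison. For singular infinitesimal character, Moeglin--Renard obtain $\Pi_{\psi^\BR}^{\bU(p,q)}$ from a regular packet by translation functors, and Moeglin obtains $\Pi_{\psi^p}^{\text{A-V}}$ from the regular case by derivatives analogous to Bernstein--Zelevinskii derivatives. The sequel then establishes the bijection by proving that translation functors and derivatives correspond under the geometric comparison, i.e.\ by the classical-group analog of the commuting square (\ref{diag:param-square}).

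Your proposal instead aims for a uniform argument via the microlocal characterization of Arthur packets on both sides, transported by an open immersion of parameter stacks. This is conceptually cleaner but has a genuine gap you already flag: the p-adic microlocal description of Arthur packets for classical groups is conjectural, so your argument is conditional on it. The paper's route avoids this entirely by anchoring everything in the explicit regular-case computations and then propagating via functors whose compatibility is what the present paper (and its sequel) actually proves. Your second proposed workaround---replacing the microlocal step by a parabolic-induction computation using the Lusztig-induction/translation compatibility---is in fact essentially the paper's strategy, but note that it still needs the regular-case base step, which in the paper's plan is handled by the explicit Adams--Johnson and Moeglin descriptions rather than by any sheaf-theoretic argument.
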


The bijection can be described very explicitly. When the infinitesimal character of $\psi^\BR$ is regular, $\Pi_{\psi^\BR}^{\bU(p,q)}$ is an Adams-Johnson packet \cite{Adams-Johnson,Arancibia-Moeglin-Renard}, which can be computed explicitly by cohomological induction in the good range \cite{Knapp-Vogan:induction}. In this case, Moeglin \cite{Moeglin:A-classical} has also computed $\Pi_{\psi^p}^{\text{A-V}}$ explicitly. Therefore the conjecture can be verified directly in the regular case. When the infinitesimal character of $\psi^\BR$ is singular, Moeglin-Renard \cite{Moeglin-Renard:unitary} showed that $\Pi_{\psi^\BR}^{\bU(p,q)}$ can be obtained from a regular packet by applying translation functors. In this case, Moeglin \cite{Moeglin:A-classical} also showed that $\Pi_{\psi^p}^{\text{A-V}}$ can be obtained from the regular case by computing derivates, which are operations analogous to the Bernstein-Zelevinskii derivatives for classical groups. In the sequel to this paper \cite{DHXZ:U} we will prove this conjecture by relating translation functors with derivative through the geometry of Langlands parameters as in the current work.

\subsection{Outline of the paper}

The preliminary section \textsection\ref{sec:prelim} contains an overview of Langlands correspondence for real and $p$-adic groups and their specializations to $\GL_n(\BC)$ and $\GL_m(\BQ_p)$, as well as their relations with translation functors and partial BZ derivatives. In \textsection \ref{sec:compare-spaces} we review the geometric and algebraic comparison theorems of \cite{Deng:symm} and \cite{Chan-Wong:GLn} that relate representations of $\GL_n(\BC)$ and $\GL_m(\BQ_p)$. The compatibility of the two approaches is proven in \textsection \ref{subsec:alg-vs-geom}. Then in \textsection \ref{sec:compare-functors} we prove the commutativity of the diagram (\ref{diag:param-square}). 

\subsection{General notations}\label{subsec:gen-notn}

Our notations related to equivariant perverse sheaves generally follow \cite{Achar:book}. Suppose $X$ is a variety with an action of an algebraic group $G$. We write $[X/G]$ for the quotient stack, $\Perv(X,G)$ for the category of $G$-equivariant perverse sheaves on $X$, and $D^b(X,G)$ for the bounded equivariant derived category. The Grothendieck group of $D^b(X,G)$ is naturally identified with the Grothendieck group of $\Perv(X,G)$. If $G$ is a closed subgroup of another algebraic group $H$, we write $H \times_G X$ for the induced variety, which is the quotient of $H \times X$ by the middle action of $G$ given by $g \cdot (h,x) = (h g\inv, g\cdot x)$. In this situation, we have an isomorphism $[G \backslash X] \cong [H \backslash (H \times_G X)]$. This will be used frequently.

Suppose $f: X \to Y$ is a map of varieties over $\BC$, the (derived) pushforward and pullback functors in the six operation formalism are denoted by $f_*$, $f_!$, $f^*$, and $f^!$. 
If $X$ and $Y$ admit group actions $G \acts X$, $H \acts Y$ and if the actions are compatible with respect to a group homomorphism $\varphi: G \to H$, then $f$ induces a morphism of quotient stacks $[G \backslash X] \to [H \backslash Y]$ and the above functors lift to functors between equivariant derived categories $D^b(X,G)$ and $D^b(Y,H)$ which will be denoted by the same symbols. If moreover the induced map $[G \backslash X] \to [H \backslash Y]$ is an isomorphism, we always identify the derived categories $D^b(X,G)$ and $D^b(Y,H)$ via $f^*$, unless otherwise stated.

For any subset $S$ inside a $\BQ$-coset $a+\BQ \subset \BC$ (typically $S = \supp \varphi$ for some integral weight function $\varphi$, see \textsection \ref{subsec:LLC/Qp}), we will translate functions defined on $\BQ$ to $S$. Examples include the floor and ceiling functions: for $r \in a + \BQ$, 
\begin{align*}
	\floor r &:= a + (\text{largest integer that is } \le r-a),\\
	\ceil r &:= a + (\text{smallest integer that is } \ge r-a).
\end{align*}
Another example is the order $\le$: for $x,y \in a + \BQ$,
\begin{equation*}
	x \le y \iff x-a \le y-a.
\end{equation*}
The $\min$ and $\max$ functions are modified similarly. 

\subsection{Acknowledgement}

We wish to thank Jeffrey Adams, Peng Shan, and Peter Trapa for valuable discussions. We would like to thank the referee(s) for many valuable comments.

\section{Preliminaries}\label{sec:prelim}

\subsection{Local Langlands correspondence for $\GL_n(\BC)$}\label{subsec:LLC-GLnC}

Let us briefly review the shape of local Langlands correspondence for real reductive algebraic groups due to Adams-Barbasch-Vogan \cite{ABV}, or rather the pure version formulated in \cite{Vogan:p-adic-Langlands}. We then apply it to $\GL_n(\BC)$.

\begin{clause}[Local Langlands correspondence for real groups {\cite{Vogan:p-adic-Langlands}}]\label{cls:LLC/R}	
	Let $G_\BC$ be a connected reductive algebraic group over $\BC$ with a fixed pinning, and let $\check G_\BC$ be its Langlands dual group equipped with the dual pinning. In particular, we have fixed Cartan subalgebras $\check \fh_\BC \subset \check \fg_\BC$ and $\fh_\BC \subset \fg_\BC$ so that $(\check \fh_\BC)^* = \fh_\BC$. Fix an action $\Gamma = \operatorname{Gal}(\BC/\BR) = \{1,\delta\}$ on $\check G_\BC$ by \textit{anti-holomorphic} automorphisms giving rise to a \textit{quasi-split} real form $\check G_\BR$ of $\check G_\BC$. This action induces an involution on the root datum of $\check G_\BC$ and hence one on the root datum of $G_\BC$, which lifts to a unique \textit{holomorphic} involution of $G_\BC$ preserving the pinning, also denoted by $\delta$. 
	We write $\check G_\BC^\Gamma = \check G_\BC \rtimes \{1,\delta\}$ and $G_\BC^\Gamma = G_\BC \rtimes \{1,\delta\}$ for the semidirect products with respect to the above actions.
	
	An \textit{infinitesimal character} for $\check G_\BC$ is a $\BC$-algebra homomorphism $Z(\cU(\check \fg_\BC)) \to \BC$, where $\cU(\check \fg_\BC)$ is the enveloping algebra and $Z(\cU(\fg_\BC))$ is its center. By the Harish-Chandra isomorphism, infinitesimal characters can be described by an element $\Lambda_\BR$ in $(\check \fh_\BC)^*$ up to Weyl group conjugacy. By the identification $(\check \fh_\BC)^* = \fh_\BC$, an infinitesimal character is a $W$-orbit of an element $\Lambda_\BR$ in $\fh_\BC$, where $W$ denotes the Weyl group. 
	
	
	The integral Weyl group of $\Lambda_\BR$ is 
	\begin{equation*}
		W_{\Lambda_\BR} = \big\{ w \in W \mid w \Lambda_\BR - \Lambda_\BR \text{ is a $\BZ$-linear combination of roots} \big\}.
	\end{equation*}
	
	A \textit{pure real form} of $\check G_\BC$ is an element $x \in \check G_\BC^\Gamma - \check G_\BC$ so that $x^2 = 1$. Two pure real forms are equivalent if they are conjugate by an element of $\check G_\BC$. There is a bijection
	\begin{equation*}
		\big\{ x \in \check G_\BC^\Gamma - \check G_\BC \mid x^2 =1 \big\}/\sim \; \cong H^1(\Gamma, \check G_\BC),
	\end{equation*}
	see \cite[Proposition 2.7(c)]{Vogan:p-adic-Langlands}. Given a pure real form $x$, $\Ad x|_{\check G_\BC}$ (conjugation on $\check G_\BC$ by $x$) is an anti-holomorphic involution and gives rise to a real form $\check G_{\BR,x} = (\check G_\BC)^{\Ad x}$ of $\check G_\BC$.
	We will write $\Rep_{\Lambda_\BR}(\check G_{\BR,x})$ for the category of Harish-Chandra modules over the pair $(\check \fg_\BC, \check K_x)$ with infinitesimal character $\Lambda_\BR$, where $\check K_x$ is the complexification of a maximal compact subgroup of $\check G_{\BR,x}$.
	
	Fix a dominant (possibly singular) infinitesimal character $\Lambda_\BR \in \fh_\BC$ for $\check G_\BC$. Consider the group $G_{\BC,\Lambda_\BR} = Z_{G_\BC}(\exp(2\pi i \Lambda_\BR))$, sometimes called the endoscopic group. We set 
	\begin{equation*}
		\cY({\Lambda_\BR}) = \big\{ y \in G_\BC^\Gamma- G_\BC \mid y^2 = \exp(2\pi i {\Lambda_\BR}) \big\} \times \cP_{\Lambda_\BR}
	\end{equation*}
	where $\cP_{\Lambda_\BR}$ is the partial flag variety of $G_{\BC,\Lambda_\BR}$ whose type is given by simple roots orthogonal to ${\Lambda_\BR}$. It is equipped with the natural diagonal action of $G_{\BC, \Lambda_\BR}$ with finitely many orbits, and hence one may consider standard and simple perverse sheaves as $!$- and $!*$-extensions of locally constant perverse sheaves on orbits. The space $\cY(\Lambda_\BR)$ is the space of Langlands parameters with infinitesimal character ${\Lambda_\BR}$ introduced in \cite{ABV} and reformulated in \cite{Adams-du-Cloux}, which we call the \textbf{ABV space}. The set of $G_{\BC, \Lambda_\BR}$-orbits (resp. simple $G_{\BC, \Lambda_\BR}$-equivariant perverse sheaves) on $\cY(\Lambda_\BR)$ parameterize $\Phi(\check G_\BR, \Lambda_\BR)$ (resp. $\Xi(\check G_\BR, \Lambda_\BR)$), the set of conjugacy classes of (resp. complete) Langlands parameters with infinitesimal character $\Lambda_\BR$.
	
	\begin{theorem}[LLC/$\BR$, {\cite[Conjecture 8.11']{Vogan:p-adic-Langlands}, \cite[Theorem 1.24, Proposition 17.16]{ABV}}]\label{thm:Vogan-duality}~
		There is a perfect pairing
		\begin{equation*}
			\bigoplus_{x \in H^1(\Gamma, \check G_\BC)} K \Rep_{\Lambda_\BR}(\check  G_{\BR,x}) \times 
			K \Perv(\cY(\Lambda_\BR), G_{\BC,\Lambda_\BR})
			\aro \BZ,
		\end{equation*}
		under which the classes of simple (resp. standard) objects on both sides form dual bases up to some signs. In particular, the change of basis matrices on both sides between simple objects and standard objects are transpose to each other.
	\end{theorem}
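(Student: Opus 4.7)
The plan is to construct the pairing on the basis of standard objects and then to use the Kazhdan-Lusztig-Vogan formalism to upgrade the duality statement to irreducibles. First I would identify a common indexing set for both sides of the pairing. By the Langlands classification of Adams-Barbasch-Vogan in the form reformulated by Vogan, the $G_{\BC,\Lambda_\BR}$-orbits on $\cY(\Lambda_\BR)$ together with irreducible equivariant local systems on them are in bijection with pairs $(x,\pi)$, where $x$ runs over equivalence classes of pure real forms of $\check G_\BC$ and $\pi$ runs over irreducible Harish-Chandra modules of $\check G_{\BR,x}$ with infinitesimal character $\Lambda_\BR$. Concretely, an orbit $\mathcal{O}$ encodes the $L$-parameter underlying an $L$-packet, and a character of the equivariant fundamental group $\pi_1^{G_{\BC,\Lambda_\BR}}(\mathcal{O})$ encodes the associated representation of the $A$-group, which specifies a pure real form together with an $L$-packet member.

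Next I would set up the two bases on each side and \emph{define} the pairing on the standard basis. On the representation side, to each pair $(x,\pi)$ one attaches a standard module $M(x,\pi)$ (a principal-series-type module with $\pi$ as its Langlands subquotient), giving a second basis of $\bigoplus_{x} K\Rep_{\Lambda_\BR}(\check G_{\BR,x})$. On the sheaf side, $[\,j_{\mathcal{O},!}\mathcal{L}[\dim\mathcal{O}]\,]$ and $[\operatorname{IC}(\mathcal{O},\mathcal{L})]$ each form a basis of $K\Perv(\cY(\Lambda_\BR),G_{\BC,\Lambda_\BR})$. I would then declare
\begin{equation*}
	\langle [M(x,\pi)],\, [\,j_{\mathcal{O},!}\mathcal{L}[\dim \mathcal{O}]\,] \rangle \;=\; (-1)^{\dim \mathcal{O}}\,\delta_{(x,\pi),(\mathcal{O},\mathcal{L})}
\end{equation*}
under the parameterization of the first step and extend linearly. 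By construction this is a perfect pairing under which standard objects on the two sides form dual bases up to the stated sign.

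To derive the corresponding duality for simple objects I would invoke the Kazhdan-Lusztig-Vogan theorem. On the representation side, the change-of-basis matrix expressing irreducibles in terms of standards is given by KLV polynomials evaluated at $1$; on the sheaf side, the change of basis from $[\operatorname{IC}(\mathcal{O},\mathcal{L})]$ to $[\,j_{\mathcal{O},!}\mathcal{L}[\dim\mathcal{O}]\,]$ is computed by the stalks of IC sheaves on smaller orbits, which are given by the same polynomials. A bookkeeping with dimension shifts shows that the two unitriangular transition matrices are inverse transposes of each other, up to the factor $(-1)^{\dim \mathcal{O} - \dim \mathcal{O}'}$. Combining this with the standard-standard duality defined above formally yields $\langle [\pi],[\operatorname{IC}(\mathcal{O},\mathcal{L})] \rangle = \pm\,\delta$ under the same parameterization, which is exactly the content of the theorem.

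The main obstacle is the Kazhdan-Lusztig-Vogan theorem itself, proven by Lusztig-Vogan through Beilinson-Bernstein localization, which translates Harish-Chandra modules into $\check K_x$-equivariant $\mathcal{D}$-modules on the flag variety and identifies the Jantzen filtration on a standard module with the weight filtration of the underlying mixed Hodge module. A subsidiary technical issue, relevant because $\Lambda_\BR$ may be singular, is the reduction of the problem from the ambient group $G_\BC$ at integral infinitesimal character to the endoscopic group $G_{\BC,\Lambda_\BR}$ acting on the partial flag variety $\cP_{\Lambda_\BR}$; this is handled by the translation principle together with the identification of the integral Weyl group $W_{\Lambda_\BR}$ with the Weyl group of the endoscopic group, together with the fact that the twisting component $\{\,y \in G_\BC^{\Gamma} - G_\BC \mid y^{2} = \exp(2\pi i \Lambda_\BR)\,\}$ precisely accounts for the extended $A$-groups appearing in the pure form picture.
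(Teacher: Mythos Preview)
The paper does not prove this theorem; it is quoted as a known result from \cite{ABV} and \cite{Vogan:p-adic-Langlands}, with no argument given beyond the citation. Your outline is essentially the strategy carried out in \cite{ABV}: parametrize both sides by the same set of geometric data (orbits plus equivariant local systems), define the pairing to make standards dual, and then invoke the Kazhdan--Lusztig--Vogan theorem to conclude that simples are also dual. So as a summary of the literature proof your sketch is accurate, and there is nothing to compare against in the paper itself.

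One small caution on your write-up: the sign in the actual statement is not just $(-1)^{\dim\mathcal{O}}$ but also involves the Kottwitz sign attached to the pure real form (the paper mentions this immediately after the theorem). For $\GL_n(\BC)$ these Kottwitz signs are trivial, which is why the paper can ignore them downstream, but in the general statement you are sketching they must be present for the bases to match correctly.
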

	
	The statement regarding change of basis matrices was first obtained by Vogan \cite{Vogan:IC4} in the case of regular infinitesimal characters, and the above perfect pairing is also known as the Vogan duality. The signs here are a combination of the ones due to Kottwitz \cite[pp. 291-292]{Kottwitz:sign} and $(-1)^{d_Q}$ where $d_Q$ is the dimension of a $G_{\BC,\Lambda_\BR}$-orbit in $\cY(\Lambda_\BR)$. The Kottwitz signs are equal to $1$ for quasi-split groups, in particular for $\GL_n(\BC)$.
\end{clause}

In the case of $\check G = \check G_\BR = \GL_n(\BC)$, both sides of these pairings are significantly simplified. Namely, $H^1(\Gamma, \check G_\BC) = 1$, and there is only one $G_\BC$-conjugacy class in the set $\{y \in G_\BC^\Gamma-G_\BC \mid y^2 = \exp(2\pi i \Lambda_\BR)\}$. Let $y$ be any element in the latter set. Then 
\begin{equation}\label{eqn:real-parameter-stack}
	[G_{\BC,\Lambda_\BR} \backslash \cY(\Lambda_\BR) ]
	= [G_{\BC,\Lambda_\BR} \backslash \big( (G_{\BC,\Lambda_\BR} \cdot y) \times \cP_{\Lambda_\BR} \big)]
	= [K_{\Lambda_\BR,y} \backslash \cP_{\Lambda_\BR}],
\end{equation}
where $K_{\Lambda_\BR,y} = G_{\BC,\Lambda_\BR}{}^{\Ad y}$ (fixed point subgroup). Hence the pairing in Theorem \ref{thm:Vogan-duality} reduces to the following form
\begin{equation*}
	K \Rep_{\Lambda_\BR}( \check G) \times K \Perv(\cP_{\Lambda_\BR}, K_{\Lambda_\BR,y}) \aro \BZ
\end{equation*}
We need more notations to describe these objects.

\begin{clause}[Representation theoretic notations for $\GL_n(\BC)$]\label{cls:RT-notations-GLn}
	Let us write $\check G = G = \GL_n(\BC)$, $\check \fg = \fg = \fgl_n(\BC)$. Following \cite[\textsection 7.1]{Vogan:book}, the embedding
	\begin{equation*}
		\check \iota: \check g \injects \check \fg \oplus \check \fg,\quad
		x \mapsto (x,\bar x)
	\end{equation*}
	realizes $\check \fg$ as a real form of $\check \fg \oplus \check \fg$ with respect to the anti-holomorphic involution
	\begin{equation*}
		\delta: \check \fg \oplus \check \fg \bijects \check \fg \oplus \check \fg,\quad
		(x,y) \mapsto (\bar y, \bar x).
	\end{equation*}
	
	Write $\check H \subset \check G$ for the diagonal torus; so $\check H \cong (\BC^\times)^n$ as a real group. Denote its Lie algebra by $\check \fh$, then the complexification of $\check \fh$ along $\check \iota$ is $\check \fh \oplus \check \fh$. Recall that a continuous character $\BC^\times \to \BC^\times$ has the form $z \mapsto z^a \bar z^b$ for some $a,b \in \BC$ with $a-b \in \BZ$. So a continuous character $\check H \to \BC^\times$ is determined by a $2n$-tuple
	\begin{equation*}
		\Lambda_\BR = (\lambda_L, \lambda_R) = (\lambda_{L,1},\ldots,\lambda_{L,n},\lambda_{R,1}, \ldots, \lambda_{R,n}) \in \BC^{2n},\quad 
		\lambda_{L,i} - \lambda_{R,i} \in \BZ,
	\end{equation*}
	namely
	\begin{equation*}
		\chi_{\Lambda_\BR} = \chi_{(\lambda_L, \lambda_R)}: \check H \cong (\BC^\times)^n \aro \BC^\times,\quad
		(z_1,\ldots,z_n) \mapsto z_1{}^{\lambda_{L,1}} \bar z_1{}^{\lambda_{R,1}} \cdots z_n{}^{\lambda_{L,n}} \bar z_n{}^{\lambda_{R,n}}.
	\end{equation*}
	Its differential is the restriction of the Lie algebra character
	\begin{equation}\label{eqn:Lambda_R-as-Lie-alg-char}
		\Lambda_\BR = (\lambda_L, \lambda_R): \check \fh \oplus \check \fh \aro \BC,\quad
		(x,y) \mapsto \lambda_L(x) + \lambda_R(y)
	\end{equation}
	along $\check \iota: \check \fh \to \check \fh \oplus \check \fh$.
	
	Let $\check B = \check H \check N \subset \check G$ be the upper-triangular Borel. We consider the normalized parabolic inductions
	\begin{align*}
		\nInd_{\check B}^{\check G} \chi_{(\lambda_L, \lambda_R)} 
		&= \big\{ f \in C^\infty(\check G) \mid \forany b = hn \in \check B, f(bg) = \delta(h)^{\frac12} \chi_{(\lambda_L, \lambda_R)}(h) f(g) \big\}.
	\end{align*}
	Here $\delta$ is the modular character of $\check B$. We write
	\begin{equation*}
		X(\lambda_L, \lambda_R) := \check K \text{-finite part of } \nInd_{\check B}^{\check G} \chi_{(\lambda_L, \lambda_R)},
	\end{equation*}
	where $\check K = \bU(n) \subset \check G$ is a maximal compact subgroup, and
	\begin{equation*}
		\bar X(\lambda_L, \lambda_R) := \text{the Langlands subquotient of } X(\lambda_L, \lambda_R).
	\end{equation*}
	Then $X(\lambda_L, \lambda_R)$ has infinitesimal character given by $\Lambda_\BR = (\lambda_L, \lambda_R)$ (we are viewing $\Lambda_\BR$ as a character on $\check \fh \oplus \check \fh$ as in (\ref{eqn:Lambda_R-as-Lie-alg-char})). Morever, the set of ``standard'' (resp. irreducible) objects in $\Rep_{\Lambda_\BR}( \check G)$ is precisely
	\begin{equation*}
		\big\{ X(\lambda_L, w \lambda_R) \mid w \in W_{\Lambda_\BR} \big\} \quad 
		\big(\text{resp. } \big\{ \bar X( \lambda_L, w \lambda_R) \mid w \in W_{\Lambda_\BR} \big\} \big),
	\end{equation*}
	where by an abuse of notation we write
	\begin{equation}\label{eqn:W-LambdaR}
		W_{\Lambda_\BR} := \{w \in W \mid w \lambda_L - \lambda_L \text{ is a $\BZ$-lienar combination of roots of $\check \fh$ in $\check \fg$}\}
	\end{equation}	
	for the integral Weyl group of $\lambda_L \in \check \fh^*$ (which is the same as the integral Weyl group of $\lambda_R$). This $W_{\Lambda_\BR}$ is different from the integral Weyl group $W_{\Lambda_\BR}$ defined in \ref{cls:LLC/R}; in the current notation, the integral Weyl group in \ref{cls:LLC/R} is equal to $W_{\Lambda_\BR} \times W_{\Lambda_\BR}$. The ``standard'' modules $X(\lambda_L, w\lambda_R)$ are different from the actual standard modules as representations, but they are the same in the Grothendieck group. Since we almost entirely deal with Grothendieck groups, it doesn't hurt to work with $X(\lambda_L, w\lambda_R)$ instead.
\end{clause}

\begin{clause}[Geometric setup and LLC for $\GL_n(\BC)$]\label{cls:real-geom-setup}
	On the dual side, it is more convenient to use a different embedding of $\fg$ into $\fg \oplus \fg$, namely
	\begin{equation*}
		\iota: \fg \injects \fg \oplus \fg,\quad
		x \mapsto (x, - \bar x^t),
	\end{equation*}
	and the involution $\delta$ is changed to
	\begin{align*}
		\delta: \fg \oplus \fg \bijects \fg \oplus \fg,&\quad 
		(x,y) \mapsto (y,x).
	\end{align*}
	
	Write $G_{\Lambda_\BR} := G_{\lambda_L} = G_{\lambda_R}$ (the second equality is because $\lambda_L - \lambda_R$ is integral), a Levi subgroup in $G$. Then the endoscopic group for $\Lambda_\BR$ is
	\begin{equation*}
		(G \times G)_{\Lambda_\BR} = G_{\Lambda_\BR} \times G_{\Lambda_\BR},
	\end{equation*}
	and 
	\begin{equation*}
		K_{\Lambda_\BR,y} = \Delta G_{\Lambda_\BR}.
	\end{equation*}	
	Hence the quotient $[G_\BC \backslash \cY(\Lambda_\BR)]$ defined in (\ref{eqn:real-parameter-stack}) can be identified with $[\Delta G_{\Lambda_\BR} \backslash (\cP_{\lambda_L} \times \cP_{\lambda_R})]$, where $\cP_{\lambda_L}$ (resp. $\cP_{\lambda_R}$) is the partial flag variety of $G_{\Lambda_{\BR}}$ whose type is determined by roots orthogonal to $\lambda_L$ (resp. $\lambda_R$), i.e. is given by the simple roots
	\begin{equation*}
		\Pi_L = \{ \alpha_i \mid \lambda_{L,i} = \lambda_{L,i+1} \} \quad
		(\text{resp. } \Pi_R = \{ \alpha_i \mid \lambda_{R,i} = \lambda_{R,i+1} \}).
	\end{equation*}
	
	We now describe standard\footnotemark and irreducible sheaves on $\cP_{\lambda_L} \times \cP_{\lambda_R}$. Assume first that $\Lambda_\BR$ is regular so that $\cP_{\lambda_L} \times \cP_{\lambda_R} = \cB_{\Lambda_\BR} \times \cB_{\Lambda_\BR}$. For $w \in W_{\Lambda_\BR}$, write
	\begin{equation*}
		Z_w := \big\{ (x,y) \in \cB_{\Lambda_\BR} \times \cB_{\Lambda_\BR} \mid \text{$\fb_x$ and $\fb_y$ are in relative position $w$} \big\}.
	\end{equation*}
	Here we say $\fb_x$ and $\fb_y$ are in relative position $w$ if for any maximal torus $T$ contained in $B_x \cap B_y$ and any lift $\dot{w}$ of $w$ in the normalizer $N_G(T)$, we have $\dot{w} F(B_y) = F(B_x)$ where $F(B)$ denotes the flag corresponding to the Borel $B$. The $Z_w$'s are precisely the $\Delta G_{\Lambda_\BR}$-orbits in $\cB_{\Lambda_\BR} \times \cB_{\Lambda_\BR}$. The orbit $Z_w$ contains $Z_v$ in its closure if and only if $w \ge v$ in the Bruhat order. The $!$-extension (resp. $!*$-extension) of the sheaf $\ubar \BC_{Z_w}[\dim Z_w]$ from $Z_w$ to $\cB_{\Lambda_\BR} \times \cB_{\Lambda_\BR}$ in the six operation formalism is denoted by $\cM_w$ (resp. $\cL_w$) and they are precisely the standard (resp. irreducible) objects in the category $\Perv(\cB_{\Lambda_\BR} \times \cB_{\Lambda_\BR}, \Delta G_{\Lambda_\BR})$. 
	\footnotetext{%
		In the geometric context, ``standard objects'' refer to the $!$-pushforwards of local systems on orbits. It originates from the study of category $\cO$, where the Beilinson-Bernstein localizations of Verma modules can be realized as $!$-pushforwards of trivial local systems on Schubert cells, and they are usually called ``standard $\cD$-modules'', see for example \cite[\S 2.2]{Soergel:n-coh}. The terminology was later adapted to the study of real groups and other areas of geometric representation theory, see for example \cite{HMSW:localization1}.
	}
	
	If $\Lambda_\BR$ is singular, then the $\Delta G_{\Lambda_\BR}$-orbit in $\cP_{\lambda_L} \times \cP_{\lambda_R}$ are parameterized by the double cosets $W(P_{\lambda_L}) \backslash W_{\Lambda_\BR} /W(P_{\lambda_R})$, where a coset $C$ corresponds to the orbit which is the image of $Z_w \subset \cB_{\Lambda_\BR} \times \cB_{\Lambda_\BR}$ under the projection $\cB_{\Lambda_\BR} \times \cB_{\Lambda_\BR} \surj \cP_{\lambda_L} \times \cP_{\lambda_R}$ for any $w \in C$. By abuse of notation, we again write $Z_w$ for orbits in $\cP_{\lambda_L} \times \cP_{\lambda_R}$, write $\cM_w$ for the standard module supported on $Z_w$, and write $\cL_w$ for the unique simple quotient of $\cM_w$. 
	
\end{clause}

\begin{namedtheorem}[LLC for $\GL_n(\BC)$]\label{prop:LLC-GLnC}
	Suppose $\Lambda_\BR$ is a dominant infinitesimal character for $\check G = \GL_n(\BC)$. The perfect pairing of Theorem \ref{thm:Vogan-duality} reduces to
	\begin{equation}\label{eqn:LLC-GLn-pairing}
		K \Rep_{\Lambda_\BR}( \check G) \times K \Perv(\cP_{\lambda_L} \times \cP_{\lambda_R}, \Delta G_{\Lambda_\BR}) \aro \BZ
	\end{equation}
	under which $X(\lambda_L, w \lambda_R)$ (resp. $\bar X(\lambda_L, w \lambda_R)$) is dual to $(-1)^{\dim Z_{w\inv}} \cM_{w\inv}$ (resp. $(-1)^{\dim Z_{w\inv}} \cL_{w\inv}$) for any $w \in W(P_{\lambda_L}) \backslash W_{\Lambda_\BR} /W(P_{\lambda_R})$. In other words, under the pairing,
	\begin{itemize}
		\item $\{ X(\lambda_L, w \lambda_R) \}_{w \in W(P_{\lambda_L}) \backslash W_{\Lambda_\BR} /W(P_{\lambda_R})}$ and $\{(-1)^{\dim Z_{w\inv}} \cM_{w\inv}\}_{w \in W(P_{\lambda_L}) \backslash W_{\Lambda_\BR} /W(P_{\lambda_R})}$ form dual bases, and 
		\item $\{ \bar X(\lambda_L, w \lambda_R)\}_{w \in W(P_{\lambda_L}) \backslash W_{\Lambda_\BR} /W(P_{\lambda_R})}$ and $\{(-1)^{\dim Z_{w\inv}} \cL_{w\inv}\}_{w \in W(P_{\lambda_L}) \backslash W_{\Lambda_\BR} /W(P_{\lambda_R})}$ form dual bases.
	\end{itemize}	 
\end{namedtheorem}

\subsection{Translation functors and push-pull functors}\label{subsec:translation}

In this subsection we recall the notion of translation functors and their geometric counterparts on the Langlands dual side.


We temporarily return to the general setup \ref{cls:LLC/R}. Let $\Lambda_\BR$ and $\Lambda_\BR'$ be two infinitesimal characters of $\check G_\BC$ that differ by an element $\mu$ in the character lattice of  $\check G_\BC$ (i.e. $\Lambda_\BR' = \Lambda_\BR + \mu$). Then the endoscopic groups $G_{\Lambda_\BR}$ and $G_{\Lambda_{\BR'}}$ are equal. For any real form $\check G_\BR$ of $\check G_\BC$, the \textit{translation functor} from $\Lambda_\BR$ to $\Lambda_\BR'$ is
\begin{equation*}
	T_{\Lambda_\BR}^{\Lambda_\BR'} : \Rep_{\Lambda_\BR}( \check G_\BR ) \aro \Rep_{[\Lambda_{\BR'}]}( \check G_\BR ),\quad
	V \mapsto \big( V \dotimes_\BC F_\mu \big)_{[\Lambda_\BR']},
\end{equation*}
where $F_\mu$ is the irreducible finite dimensional representation of $\check G_\BC$ with extremal weight $\mu$, and the subscript $(-)_{[\Lambda_\BR']}$ takes the generalized infinitesimal character $\Lambda_\BR'$. This is an exact functor. If we take the functor $T_{\Lambda_\BR}^{\Lambda_\BR'}$ on the direct sum $\bigoplus_x K \Rep_{\Lambda_\BR}(\check  G_{\BR,x})$ over pure real forms, then under the perfect pairing of Theorem \ref{thm:Vogan-duality}, we obtain an adjoint operator $(T_{\Lambda_\BR}^{\Lambda_\BR'})^*: K\Perv(\cY(\Lambda_\BR'), G_{\BC,\Lambda_\BR'}) \to K\Perv(\cY(\Lambda_\BR), G_{\BC, \Lambda_\BR})$. In a special situation, the map $(T_{\Lambda_\BR}^{\Lambda_\BR'})^*$ admits a nice geometric description. Suppose $\Lambda_\BR$ and $\Lambda_\BR'$ are in the closure of the same Weyl chamber, and $\Lambda_\BR$ is at least as regular as $\Lambda_\BR'$, i.e. if a simple root $\alpha$ is orthogonal to $\Lambda_\BR$, then it is also orthogonal to $\Lambda_\BR'$. Then we have a natural projection $\varpi: \cP_{\Lambda_\BR} \to \cP_{\Lambda_{\BR'}}$, and hence a projection $\cY(\Lambda_\BR) \surj \cY(\Lambda_\BR')$ which we still denote by $\varpi$.

\begin{proposition}\label{prop:adjoint-of-translation}
	Suppose $\Lambda_\BR$ and $\Lambda_\BR'$ are in the closure of the same Weyl chamber, and $\Lambda_\BR$ is at least as regular as $\Lambda_\BR'$. Consider the perfect pairings in Theorem \ref{thm:Vogan-duality} base changed to $\BC$
		\begin{gather*}
			\bigoplus_{x \in H^1(\Gamma, \check G_\BC)} K \Rep_{\Lambda_\BR}( \check G_{\BR,x} )_\BC
			\times
			K \Perv(\cY(\Lambda_\BR),  G_{\BC,\Lambda_\BR})_\BC
			\aro \BC\\
			\bigoplus_{x \in H^1(\Gamma, \check G_\BC)} K \Rep_{\Lambda_{\BR'}}( \check G_{\BR,x} )_\BC
			\times
			K \Perv(\cY(\Lambda_\BR'), G_{\BC,\Lambda_\BR'})_\BC
			\aro \BC.
		\end{gather*}
		Under these pairings, $T_{\Lambda_\BR}^{\Lambda_\BR'}$ (resp. $T_{\Lambda_\BR'}^{\Lambda_\BR}$) is adjoint to $\varpi^*$ (resp. $\varpi_*$), i.e.
		\begin{equation*}
			\langle T_{\Lambda_\BR}^{\Lambda_\BR'} -,- \rangle = \langle -, \varpi^* (-) \rangle,\quad
			\langle T_{\Lambda_\BR'}^{\Lambda_\BR} -,- \rangle = \langle -, \varpi_* (-) \rangle.
		\end{equation*}
\end{proposition}

The theorem essentially follows from \cite[14.9.(b)]{Vogan:IC4} (see also \cite[Proposition 17.16]{ABV}). We postpone the proof to Appendix \ref{subsec:trans_vs_pushpull}, see Proposition \ref{prop:sing_to_sing}.

Now let us restrict to $\check G = \check G_\BR = \GL_n(\BC)$, where a geometric description is available for more general types of translations. Suppose $\Lambda_\BR = (\lambda_{L,1},\ldots, \lambda_{L,n}, \lambda_{R,1},\ldots,\lambda_{R,n})$ is a dominant integral infinitesimal character. Suppose $1 \le j \le n$ and $c >0$ is so that either $j+c-1=n$ or 
\begin{equation}\label{eqn:lambdaL-and-lambdaL'-a}
	\lambda_{L,j} = \lambda_{L,j+1} = \cdots = \lambda_{L,j+c-1} > \lambda_{j+c}.
\end{equation}
Let $\Lambda_\BR' = (\lambda_L', \lambda_R')$ be so that $\lambda_R' = \lambda_R$, and
\begin{equation}\label{eqn:lambdaL-and-lambdaL'-b}
	\lambda_L' = \lambda_L - (e_j + \cdots + e_{j+r-1})
\end{equation}
where the $e_i$'s are the coordinate vectors of $\BC^{2n}$. In other words,
\begin{equation}\label{eqn:lambdaL-and-lambdaL'-c}
	\lambda_L' = (\lambda_{L,1}, \ldots, \lambda_{L,j-1}, \lambda_{L,j}-1, \ldots, \lambda_{L,j+c-1}-1, \lambda_{L,j+c}, \ldots, \lambda_{L,n}).
\end{equation}
Recall that one of our main theorems (Theorem \ref{thm:trans-der}, see also Corollary \ref{cor:compare-functors}) aims to compare Lusztig induction $\LInd_{\psi, c[k]}^\varphi$ on the $p$-adic side and the push-pull functor $\bI_{\Lambda_{\BR'}}^{\Lambda_\BR}$ on the real side, where the weight functions $\psi$ and $\varphi$ appearing in the $p$-adic side satisfy $\varphi = \psi + c[k]$ and $k > r$. Correspondingly, the infinitesimal characters $\Lambda_\BR$ and $\Lambda_\BR'$ on the real side can be chosen to satisfy (\ref{eqn:lambdaL-and-lambdaL'-a})-(\ref{eqn:lambdaL-and-lambdaL'-c}). This explains the reason for studying these cases.

Neither $\Lambda_\BR$ or $\Lambda_\BR'$ is more regular than the other, and hence the parabolic subgroups $P_{\lambda_L}$ and $P_{\lambda_R}$ may not contain one another. Let $Q_L = P_{\lambda_L} \cap P_{\lambda_L'}$, and write $\cQ_L = G/Q_L$. Then we have natural projections
\begin{equation*}
	\cP_{\lambda_L'} \times \cP_{\lambda_R'} = \cP_{\lambda_L'} \times \cP_{\lambda_R} \xtwoheadleftarrow{\pi'} \cQ_L \times \cP_{\lambda_R} \xtwoheadrightarrow{\pi} \cP_{\lambda_L} \times \cP_{\lambda_R}
\end{equation*}
and hence a pullback-pushforward functor
\begin{equation*}
	\bI_{\Lambda_{\BR'}}^{\Lambda_\BR} = \pi_* \circ \pi'{}^*: D^b( \cP_{\lambda_L'} \times \cP_{\lambda_R}, \Delta G) \aro D^b( \cP_{\lambda_L} \times \cP_{\lambda_R}, \Delta G).
\end{equation*}

\begin{proposition}\label{prop:adjoint-of-translation-mixed}
	Suppose $\Lambda_\BR$ and $\Lambda_\BR'$ are as above. Consider the perfect pairings (\ref{eqn:LLC-GLn-pairing}) base changed to $\BC$ 
	\begin{gather*}
		K \Rep_{\Lambda_\BR}( \check G)_\BC \times K \Perv(\cP_{\lambda_L} \times \cP_{\lambda_R}, \Delta G_{\Lambda_\BR})_\BC \aro \BC\\
		K \Rep_{\Lambda_\BR'}( \check G)_\BC \times K \Perv(\cP_{\lambda_L'} \times \cP_{\lambda_R}, \Delta G_{\Lambda_\BR})_\BC \aro \BC.
	\end{gather*}
	Under these pairings, $T_{\Lambda_\BR}^{\Lambda_\BR'}$ is adjoint to $\bI_{\Lambda_{\BR'}}^{\Lambda_\BR}$, that is
	\begin{equation*}
		\langle T_{\Lambda_\BR}^{\Lambda_\BR'} -,- \rangle = \langle -, \bI_{\Lambda_{\BR'}}^{\Lambda_\BR} - \rangle.
	\end{equation*}
\end{proposition}

\begin{proof}
	By Proposition \ref{prop:2nd_decomposition}, $T_{\Lambda_\BR}^{\Lambda_\BR'}$ can be factored as
	\begin{equation*}
		T_{\Lambda_\BR}^{\Lambda_\BR'} = T_{\Lambda_\BR''}^{\Lambda_\BR'} \circ T_{\Lambda_\BR}^{\Lambda_\BR''}.
	\end{equation*}
	Here $\Lambda_\BR'' = (\lambda_L'', \lambda_R'')$ is a dominant integral infinitesimal character with $\lambda_R'' = \lambda_R$ and $P_{\lambda_L''} = P_{\lambda_L} \cap P_{\lambda_L'} = Q_L$. In particular, $\Lambda_\BR''$ is more regular than $\Lambda_\BR$ and $\Lambda_\BR'$. By Proposition \ref{prop:adjoint-of-translation}, $T_{\Lambda_\BR''}^{\Lambda_\BR'}$ is adjoint to $\pi'{}^*$ and $T_{\Lambda_\BR}^{\Lambda_\BR''}$ is adjoint to $\pi_*$. Hence $T_{\Lambda_\BR}^{\Lambda_\BR'}$ is adjoint to $\pi_* \circ \pi'{}^* = \bI_{\Lambda_{\BR'}}^{\Lambda_\BR}$, as required.
\end{proof}

\subsection{Local Langlands correspondence for $\GL_m(\BQ_p)$}\label{subsec:LLC/Qp}


\begin{clause}[Vogan's local Langlands conjecture for $p$-adic groups, {\cite{Vogan:p-adic-Langlands}}]
	Let us first review the general statement for Vogan's formulation of local Langlands correspondence. We will only describe the shape of the theory and omit technical details since they are not really needed in what follows (we will eventually be working with alternative descriptions of the objects involved). 
	 
	Let $\check G$ be a connected reductive algebraic group over $F = \BQ_p$, let $G$ be its Langlands dual group (over $\BC$), let $W_F = I_F \rtimes \langle \ff \rangle$ be the Weil group of $F$, where $I_F$ denotes the Inertia and $\ff$ is a fixed lift of the arithmetic Frobenius $x \mapsto x^p$. Let $W_F' = W_F \times \SL_2(\BC)$ be the Weil-Deligne group. An \textit{infinitesimal character} is a continuous homomorphism
	\begin{equation*}
		\Lambda : W_F \aro {}^L \check G
	\end{equation*}
	satisfying certain conditions (see \cite[(4.4)]{Vogan:p-adic-Langlands} and \cite[\S 4.1]{Cunningham-etal:microlocal}). The \textit{Vogan variety} of $\Lambda$ is
	\begin{equation*}
		E_\Lambda = \big\{ \xi \in \fg \mid \forany w \in W_F, \Ad( \Lambda(w)) \xi = |w| \xi \big\}.
	\end{equation*}
	This is equipped with an action of $G_\Lambda = Z_G(\Lambda)$ with finitely many orbits. Similar to the real case, the set of $G_\Lambda$-orbits (resp. simple $G_\Lambda$-equivariant perverse sheaves) on $E_\Lambda$ parameterize $\Phi(\check G, \Lambda)$ (resp. $\Xi(\check G, \Lambda)$), the set of conjugacy classes of (resp. complete) Langlands parameters with infinitesimal character $\Lambda$.
	
	An element $\delta \in H^1(F,\check G)$ is called a \textit{pure inner form} of $\check G$, which determines a $p$-adic group $\check G_\delta$. For each irreducible smooth representation $\pi$ of $\check G_\delta(F)$, the local Langlands correspondence/conjecture attaches to $\pi$ a Langlands parameter
	\begin{equation*}
		\phi_\pi: W_F' \aro {}^L \check G
	\end{equation*}
	a continuous map satisfying certain conditions (see \cite[Definition 4.2]{Vogan:p-adic-Langlands}, \cite[\S 3.4]{Cunningham-etal:microlocal}). For any Langlands parameter $\phi$, one may attach to it an infinitesimal character $\Lambda_\phi$ by
	\begin{equation*}
		\Lambda_\phi : W_F \aro {}^L \check G,\quad
		w \mapsto \phi\left( w, 
		\begin{pmatrix}
			|w|^{1/2} \\ & |w|^{-1/2}
		\end{pmatrix} \right).
	\end{equation*}
	Let $\Rep_\Lambda(\check G_\delta)$ be the full subcategory of smooth representations of $\check G$ of finite length whose irreducible composition factors $\pi$ all have infinitesimal character $\Lambda$. Let $G_\Lambda = Z_G(\Lambda)$.
\end{clause}

\begin{conjecture}[Vogan's local Langlands correspondence]
	There is a perfect pairing
	\begin{equation}\label{eqn:LLC-Qp-pairing}
		\Big( \bigoplus_{\delta \in H^1(F,\check G)} K \Rep_\Lambda( \check G_\delta) \Big) \times
		K D^b( E_\Lambda, G_\Lambda) \aro \BZ
	\end{equation}
	such that the classes of simple (resp. standard) objects on both sides form dual bases up to some signs. In particular, the change of basis matrices on both sides between simple objects and standard objects are transpose to each other.
\end{conjecture}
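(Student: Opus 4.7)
The plan is to mimic the strategy of the real case (Theorem \ref{thm:Vogan-duality}): index natural bases on both sides of (\ref{eqn:LLC-Qp-pairing}) by the same combinatorial set, define the pairing on standard--standard pairs so that it is a signed Kronecker delta, and reduce the rest of the statement to a Kazhdan--Lusztig-type matching of transition matrices.

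First I would set up the parameter/orbit dictionary. A Langlands parameter $\phi: W_F' \to {}^L\check G$ with prescribed $\Lambda_\phi = \Lambda$ is determined by $\Lambda$ together with the nilpotent element $\xi \in \fg$ obtained as the image under the $\SL_2$ factor of $\phi$ of the standard upper-triangular nilpotent, and the condition $\Lambda_\phi = \Lambda$ forces $\xi \in E_\Lambda$. Therefore $G$-conjugacy classes of such $\phi$ correspond to $G_\Lambda$-orbits $O \subset E_\Lambda$, and enriching by a character of the component group (which encodes the pure inner form $\delta$ together with a choice of irreducible within the L-packet) corresponds to enriching an orbit by a $G_\Lambda$-equivariant irreducible local system, i.e.\ to a $G_\Lambda$-equivariant simple perverse sheaf on $E_\Lambda$. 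Write $\mathcal{I}$ for this common indexing set.

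Next I would construct the pairing. For each $i \in \mathcal{I}$, the representation side yields a standard module $M_i$ (parabolically induced from an essentially tempered representation on a Levi) with unique Langlands quotient $L_i$, and the $M$'s (resp.\ $L$'s) form a $\BZ$-basis of $\bigoplus_{\delta \in H^1(F,\check G)} K\Rep_\Lambda(\check G_\delta)$. The geometric side yields standard sheaves $\cM_i$ and simple sheaves $\cL_i$ in $K\Perv(E_\Lambda, G_\Lambda)$. Declare $\langle M_i, \cM_j\rangle = \pm \delta_{ij}$ with signs chosen as in the real case. The content of the theorem is then that $\langle L_i, \cL_j\rangle = \pm \delta_{ij}$ with the same signs, or equivalently that the transition matrix $L \leftrightarrow M$ on the representation side is the transpose of $\cL \leftrightarrow \cM$ on the geometric side.

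For $\check G = \GL_m(\BQ_p)$ this last matching is proven by Ariki and Chriss--Ginzburg: under the Zelevinskii classification, $\mathcal{I}$ is the set of multi-segments with fixed support, the standard sheaves match the Zelevinskii standard modules, and IC stalks of orbit closures compute the multiplicities of $L_i$ in $M_j$; concretely, Chriss--Ginzburg realize the equivariant $K$-theory of a Steinberg-type variety as (a specialization of) an affine Hecke algebra, and both Grothendieck groups appear as modules over it carrying canonical bases that pair to $\pm\delta$. The main obstacle for a general $\check G$, and the reason the full statement remains a conjecture, is precisely this last step: one must show that $p$-adic composition multiplicities $[M_j : L_i]$ are computed by stalks of IC sheaves on $E_\Lambda$. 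On the real side this is Beilinson--Bernstein localization combined with the Kazhdan--Lusztig conjecture; no analogous localization is currently available for general $p$-adic groups, so in the absence of a categorical local Langlands one would have to proceed block by block via Bushnell--Kutzko types, reducing each Bernstein block to an affine Hecke algebra module where the $\GL$-style argument can be run. Only the $\GL_m(\BQ_p)$ case enters the present paper.
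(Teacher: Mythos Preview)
The statement is labeled a \emph{conjecture} in the paper and is not accompanied by any proof; the paper simply records it as Vogan's formulation, notes the Kottwitz signs, and then observes that for $\GL_m(\BQ_p)$ with unramified infinitesimal character the statement is a theorem due to Ariki and Chriss--Ginzburg. There is therefore no ``paper's own proof'' to compare against.

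Your proposal correctly recognizes this: you sketch the expected architecture (orbit/parameter dictionary, standard--standard pairing, reduction to a transition-matrix identity), point to Ariki and Chriss--Ginzburg for the $\GL_m$ case, and explicitly flag that the general case remains open for lack of a localization-type input. This matches the paper's treatment exactly. The only adjustment is one of framing: you should not present this as a proof proposal at all, since neither you nor the paper claims a proof of the general statement.
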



Let us specialize to $\check G = G = \GL_m$. In this case $H^1(F,\check G)$ is trivial. We will be focusing on the case where the infinitesimal character is unramified\footnote{When it comes to the multiplicities of irreducible representations in standard representations, the method used for unramified representations more or less works also in general, see \cite[Remark 8.7]{Zelevinskii} and \cite[Remarque 4.18]{Minguez-Secherre}}. Here an infinitesimal character $\Lambda$ is \textit{unramified} if it is trivial on the inertia subgroup $I_F$. They correspond to representations whose composition factors admit nonzero vectors fixed by the Iwahori subgroup. For unramified infinitesimal characters of $\GL_m(\BQ_p)$, Vogan's conjecture is proven by a combination of Ariki \cite{Ariki} and Chriss-Ginzburg \cite[Theorem 8.6.23]{Chriss-Ginzburg} (see also \cite{Kazhdan-Lusztig:Deligne-Langlands}).

\begin{clause}[Parameter space for $\GL_m(\BQ_p)$]\label{cls:ELambda-as-Evarphi}
	We will work with an alternative description of $E_\Lambda$ and $G_\Lambda$. Let $\phi$ be an unramified Langlands parameter, a map from $\langle \ff \rangle \times \SL_2$ to $\GL_m$, or equivalently an $m$-dimensional representation of $\langle \ff \rangle \times \SL_2$. The conditions for $\phi$ \cite[Definition 4.2]{Vogan:p-adic-Langlands} in particular implies that this representation decomposes into irreducible ones. So up to conjugacy, $\phi$ is of the form 
	\begin{equation*}
		\phi = \Big( |\cdot|^{\frac12 a_1} \boxtimes S_{d_1} \Big) \oplus \cdots \oplus \Big( |\cdot |^{\frac12 a_n} \boxtimes S_{d_n} \Big)
	\end{equation*}
	where $a_i \in \BC$, $|\cdot|$ is the norm function which sends $\ff$ to $p$, $d_i \in \BZ_{>0}$ with $\sum_i d_i = m$, and $S_{d_i}$ is the irreducible $d_i$-dimensional representation of $\SL_2$. Let $\Lambda = \Lambda_\phi$ be its infinitesimal character. Then $\Lambda$ can be conjugated to the following form
	\begin{equation*}
		\Lambda(\ff) = \bigoplus_{i=1}^n p^{-\frac12 a_i} \diag \big( p^{-\frac12 (d_i-1)}, p^{-\frac12(d_i-3)}, \ldots, p^{-\frac12(1-d_i)} \big)
	\end{equation*}
	Note that the conjugacy class of $\Lambda$ is determined by that of $\Lambda(\ff)$. Since the latter is determined by the multiplicities of each eigenvalue $p^i$ of $\Lambda(\ff)$ acting on $\BC^m$, it suffices to record the multiplicities of each $p^i$. Write 
	\begin{equation*}
		\varphi: \BC \aro \BN,\quad
		i \mapsto (\text{the multiplicity of the eigenvalue $p^i$ of $\Lambda(\ff)$ on $\BC^m$}),
	\end{equation*}
	called the \textbf{weight function of $\Lambda$} or simply a \textbf{weight function}. Write 
	\begin{equation*}
		V_{\varphi,i} = \text{$\Lambda(\ff)$-eigenspace with eigenvalue $p^i$} \subset \BC^m
	\end{equation*}
	and let $V_\varphi$ be the graded vector space
	\begin{equation*}
		V_\varphi = \bigoplus_{i \in \BC} V_{\varphi,i} [-i]
	\end{equation*}
	(i.e. $V_{\varphi,i}$ is in degree $i$) with $\BC^m$ as the underlying space. Then, under the isomorphism $\fgl_m(\BC) \bijects \End(\BC^m)$, $E_\Lambda$ can be identified with degree $1$ endomorphisms of $V_\varphi$
	\begin{equation*}
		E_\Lambda \bijects E_\varphi := \bigoplus_{i \in \BC} \Hom_\BC(V_{\varphi,i},V_{\varphi,i+1})
	\end{equation*}
	and
	\begin{equation*}
		G_\Lambda \bijects G_\varphi := \bigoplus_{i \in \BC} \GL(V_{\varphi,i})
	\end{equation*}
	(see \cite{Zelevinskii} or \cite[Lemma 2.1]{Cunningham-Ray}) where $G_\varphi$ acts on $E_\varphi$ by conjugation, namely
	\begin{equation*}
		g \cdot T = g \circ T \circ g\inv, \quad
		\forany g \in G_\varphi, T \in E_\varphi.
	\end{equation*}
\end{clause}

\begin{clause}[Multisegments and orbits in $E_\varphi$ {\cite{Zelevinskii}}]\label{cls:p-adic-closure-relations}
	We now recall the combinatorial description of the $G_\varphi$-orbit structure on $E_\varphi$, due to Zelevinskii. A \textbf{segment} is a subset in $\BC$ of the form
	\begin{equation*}
		[a,b] = \{a, a+1, a+2, \ldots, b-1, b\} \quad \text{ for some } a,b \in \BC \text{ with } b-a \in \BZ_{\ge 0}.
	\end{equation*}
	We use the convention that $[a] := [a,a] = \{a\}$. A \textbf{multisegment} is a multiset of segments, or equivalently a $\BZ_{\ge 0}$-linear combination of segments. The \textbf{weight function} of a multisegment $\bm$ is a function $\varphi: \BC \to \BN$ so that $\varphi(i)$ is the number of times $i$ appears in $\bm$. More generally, a \textbf{weight function} is a function $\varphi: \BC \to \BN$ with finite support.
	
	Two segments $[a,b]$ and $[c,d]$ are said to be \textbf{linked} if neither contains the other, and $[a,b] \cup [c,d]$ (union as subsets in $\BC$) is again a segment. Suppose $\bm$ is a multisegment containing two linked segments $[a,b]$ and $[c,d]$, i.e. $\bm = [a,b] + [c,d] + \bm'$ for some multisegment $\bm'$. Let $\bn := [a,b] \cup [c,d] + [a,b] \cap [c,d] + \bm'$. We say $\bn$ is obtained from $\bm$ by an \textbf{elementary operation}.
	
	Given a multisegment $\bm = \sum_j c_j [a_j, b_j]$ with weight $\varphi$, we define a subset 
	\begin{equation*}
		O_\bm \subset E_\varphi
	\end{equation*}
	as follows. Since $T$ is nilpotent and homogeneous of degree one, there exist a homogeneous basis of $V_\varphi$ under which the matrix of $T$ is in Jordan normal form. This means that each basis vector $v_i \in V_{\varphi,i}$ is a part of a unique chain of the form $0 \xmapsto{T} v_a \xmapsto{T} v_{a+1} \xmapsto{T} \cdots \xmapsto{T} v_b \xmapsto{T} 0$. Such a chain is called a \textit{Jordan cell}. Then $O_\bm$ consists of those operators $T \in E_\varphi$ that have exactly $c_j$ Jordan cells of the form $0 \xmapsto{T} v_{a_j} \xmapsto{T} \cdots \xmapsto{T} v_{b_j} \xmapsto{T} 0$.
	
	\begin{namedtheorem}[Orbit structure on $E_\varphi$]
		There is a bijection
		\begin{equation*}
			G_\varphi \backslash E_\varphi \bijects \big\{\text{multisegments of weight } \varphi \big\},\quad
			O_\bm \,\mapsfrom\; \bm.
		\end{equation*}
		For multisegments $\bm$ and $\bn$ of weight $\varphi$, $\overline{O}_\fn$ contains $O_\bm$ if and only if there is a sequence of elementary operations transforming $\bm$ into $\bn$ \cite[Proposition-Definition 1.8, Theorem 2.2]{Zelevinskii}.
	\end{namedtheorem}
	
	\begin{example}\label{ex:elem-op}
		Let $\bn = [-1,3]+[-1]+[0,2]+[0,1]+[1]$ and $\bm = [-1,3]+[-1]+[1,2]+2[0,1]$. Pictorially,
		\begin{equation*}
			\bn = 
			\begin{tikzcd}[start anchor = real center, end anchor = real center, row sep=1ex, column sep=1ex]
				{\makebox[0pt]{$-1$}} & {\makebox[0pt]{$0$}} &{\makebox[0pt]{$1$}} & {\makebox[0pt]{$2$}} & {\makebox[0pt]{$3$}}\\
				\bullet \ar[r, dash] & \bullet \ar[r, dash] & \bullet \ar[r, dash] & \bullet \ar[r, dash] & \bullet \\
				\bullet & \bullet \ar[r,dash] & \bullet \ar[r, dash] & \bullet\\
				& & \bullet \\
				& \bullet \ar[r, dash] & \bullet
			\end{tikzcd},
			\quad
			\bm = 
			\begin{tikzcd}[start anchor = real center, end anchor = real center, row sep=1ex, column sep=1ex]
				{\makebox[0pt]{$-1$}} & {\makebox[0pt]{$0$}} &{\makebox[0pt]{$1$}} & {\makebox[0pt]{$2$}} & {\makebox[0pt]{$3$}}\\
				\bullet \ar[r, dash] & \bullet \ar[r, dash] & \bullet \ar[r, dash] & \bullet \ar[r, dash] & \bullet \\
				\bullet &  & \bullet \ar[r, dash] & \bullet\\
				& \bullet \ar[r, dash] & \bullet \\
				& \bullet \ar[r, dash] & \bullet
			\end{tikzcd}.
		\end{equation*}
		Here $\bm$ contains $[0,1]$ and $[1,2]$ which are linked, and $\bn$ is obtained from $\bm$ by replacing $[0,1]+[1,2]$ by $[0,2]+[1]$, an elementary operation. Hence $\overline{O_\bn} \supseteq O_\bm$.
	\end{example}	
\end{clause}

\begin{clause}[Representation theoretic notations for $\GL_m(\BQ_p)$, \cite{Zelevinskii:GLn}]
	For each $G_\varphi$-orbit $O_\bm$ in $E_\varphi$, there is a unique irreducible $G_\varphi$-equivariant perverse sheaf on $O_\bm$, namely $\ubar \BC_{O_\bm}[\dim O_\bm]$. The $!*$-extension (resp. extension by zero) of $\ubar \BC_{O_\bm}[\dim O_\bm]$ to $E_\varphi$ in the six operation formalism will be denoted by $\cL_\bm$ (resp. $\cM_\bm$).
	They form two bases for the Grothendieck group $KD^b(E_\varphi, G_\varphi)$. 
	
	We now assign to each multisegment $\bm$ a standard representation and an irreducible representation of $\GL_m(\BQ_p)$. We assign to a segment $[a,b]$
	\begin{equation*}
		\St_{[a,b]} := 
		\begin{array}{l}
			\text{unique irreducible quotient of }\\
			\nInd_{\GL_1(F) \times \cdots \times \GL_1(F)}^{\GL_{b-a+1}(F)} \big( |\cdot|^a \boxtimes |\cdot|^{a+1} \boxtimes \cdots \boxtimes |\cdot|^b \big),
		\end{array}		
	\end{equation*}
	where $\nInd$ denotes the normalized induction.
	For a multisegment $\bm = \sum_{j=1}^k c_j [a_j,b_j]$, re-order the subscripts $j$ so that $\operatorname{Re}(a_j+b_j) \ge \operatorname{Re}(a_{j+1}+b_{j+1})$. We then assign to $\bm$ the standard representation 
	\begin{equation*}
		X_\bm :=
		\nInd_{\GL_{b_1-a_1+1}(F)^{c_1} \times \cdots \times \GL_{b_k-a_k+1}(F)^{c_k}}^{\GL_m(F)} \Big( \St_{[a_1,b_1]}{}^{\boxtimes \; c_1} \boxtimes \cdots \boxtimes \St_{[a_k,b_k]}{}^{\boxtimes \; c_k} \Big)
	\end{equation*}
	and write
	\begin{equation*}
		\bar X_\bm := \text{unique irreducible quotient of $X_\bm$}.
	\end{equation*}
\end{clause}

\begin{namedtheorem}[LLC for $\GL_m(\BQ_p)$]\label{prop:LLC-GLmQp}
	Let $\Lambda$ be an unramified infinitesimal character for $\GL_m(\BQ_p)$, let $\varphi$ be the corresponding weight function, and let $\bm$ be a multisegment of weight $\varphi$. Then the perfect pairing (\ref{eqn:LLC-Qp-pairing}) simplifies to
	\begin{equation*}
		K \Rep_\Lambda(\GL_m(\BQ_p)) \times K D^b(E_\varphi, G_\varphi) \aro \BZ,
	\end{equation*}
	under which $(-1)^{\dim O_\bm} \cM_\bm$ is dual to $X_\bm$ and $(-1)^{\dim O_\bm} \cL_\bm$ is dual to $\bar X_\bm$. In other words, under the pairing, 
	\begin{itemize}
		\item $\{X_\bm \mid \bm \text{ has weight } \varphi\}$ and $\{(-1)^{\dim O_\bm} \cM_\bm \mid \bm \text{ has weight } \varphi\}$ form dual bases, and
		\item $\{\bar X_\bm \mid \bm \text{ has weight } \varphi\}$ and $\{(-1)^{\dim O_\bm} \cL_\bm \mid \bm \text{ has weight } \varphi\}$ form dual bases.
	\end{itemize}
\end{namedtheorem}

\begin{clause}[Integral pieces of $\varphi$ and regularity]\label{cls:integral-pieces}
	Here is some notion that is convenient for reducing non-integral results to integral ones. A weight function $\varphi$ is said to be \textbf{integral} if its support is contained in $a + \BZ$ for some $a \in \BC$. If $\varphi$ is non-integral, there exists a unique way of writing 
	\begin{equation*}
		\varphi = \sum_s \varphi_s
	\end{equation*}
	so that
	\begin{itemize}
		\item each $\varphi_s$ is integral, and
		\item the number of $\varphi_s$ in the sum is minimum. 
	\end{itemize}
	Indeed, as $\supp \varphi$ is contained in a finite disjoint union of sets of the form $a_s + \BZ \subset \BC$, one may take $\varphi_s$ to be equal to $\varphi$ on $a_s +\BZ$ and zero elsewhere. We refer to $\varphi = \sum_s \varphi_s$ as the \textbf{decomposition of $\varphi$ into integral pieces}.
	
	As an immediate consequence of the closure relation described in \ref{cls:p-adic-closure-relations}, we have
	\begin{equation*}
		G_\varphi = \prod_s G_{\varphi_s} \text{ and } E_\varphi = \prod_s E_{\varphi_s}
	\end{equation*}
	if the $\varphi_s$ are the integral pieces of $\varphi$.
	
	We say $\varphi$ is \textbf{regular} if $|\varphi(a) - \varphi(a+1)| \le 1$ for any $a \in \BC$. Under the comparison of real and $p$-adic parameter spaces described in \textsection \ref{subsec:geom-comparison}, the real infinitesimal character $\Lambda_\BR$ is regular if and only if the weight function $\varphi$ of the corresponding $p$-adic infinitesimal character $\Lambda_p$ is regular.
\end{clause}

\subsection{Jacquet restrictions, BZ derivatives, and Lusztig's inductions}\label{subsec:Lusztig-ind}

In this subsection, we recall the comparison between Jacquet restriction and partial Bernstein-Zelevinskii derivatives on $\Rep(\GL_m(\BQ_p))$ with Lusztig's induction on the Vogan varieties.

For simplicity, let us write $\GL_m$ for $\GL_m(\BQ_p)$ in this subsection.

\begin{clause}[Jacquet restriction and BZ derivatives, {\cite{Deng:symm}}]
	Let $G$ be a split $p$-adic group, and let $P \subset G$ be a parabolic subgroup with Levi decomposition $P = L U$. The \textit{normalized Jacquet functor} from representations of $G$ to representations of $L$ is the functor of taking $U$-coinvariants twisted by half of the modular character:
	\begin{equation*}
		\rJ_L^G : \Rep(G) \aro \Rep(L),\quad 
		M \mapsto \big( M/ \{ u \cdot m - m \mid u \in U, m \in M \} \big) \otimes \delta_U^{\frac12}.
	\end{equation*}
	
	Now consider the parabolic subgroup $P_{i,m-i} = (\GL_i \times \GL_{m-i}) U \subset \GL_m$ containing the block-diagonal Levi $\GL_i \times \GL_{m-i}$ and the upper-triangular Borel. Then the Jacquet functor can be viewed as a map
	\begin{equation*}
		\rJ_{\GL_i \times \GL_{m-i}}^{\GL_m} : \Rep(\GL_m) \aro \Rep(\GL_i) \times \Rep(\GL_{m-i}).
	\end{equation*}	
	The idea of Bernstein-Zelevinskii derivative is to put all the Jacquet functors $\rJ_{\GL_i \times \GL_{m-i}}^{\GL_m}$ together as a whole. In more details, let 
	\begin{align*}
		\cR_m &:= 
		\begin{array}{l}
			\text{Grothendieck group of the category of finite length representations} \\
			\text{of $\GL_m$ whose composition factors are of the form } \bar X(\bm),\\
			\text{i.e. those with Iwahori fixed vectors},
		\end{array}\\
		\cR &:= \bigoplus_{m \ge 1} \cR_m.
	\end{align*}
	Then $\cR$ is a Hopf algebra whose multiplication given by normalized induction, namely
	\begin{equation*}
		m: \cR \otimes \cR \aro \cR,\quad
		M_i \otimes M_j \mapsto \nInd_{\GL_i \times \GL_j}^{\GL_{i+j}} \big( M_i \boxtimes M_j \big)
	\end{equation*}
	for any $M_i \in \cR_i$. Since Jacquet functors are exact, we obtain homomorphisms
	\begin{equation*}
		\rJ_{\GL_i \times \GL_{m-i}}^{\GL_m} : \cR_n \aro \cR_i \dotimes_\BZ \cR_{m-i}.
	\end{equation*}
	Putting all Jacquet functors together, we obtain
	\begin{equation*}
		\bc: \cR \aro \cR \otimes \cR,\quad
		M \mapsto \sum_{i=0}^m \rJ_{\GL_i, \GL_{m-i}}^{\GL_m} (M)
	\end{equation*}
	for any $\GL_m$-representation $M$.
	This gives the comultiplication on $\cR$. Zelevinskii's classification of representations of $\GL_m$ can be rephrased by saying that the $\St_{[a,b]}$'s (for segments $[a,b]$) form a set of algebraic generators for the polynomial ring $\cR$, and the $X_\fm$'s (for multisegments $\fm$) form a $\BZ$-basis of $\cR$. Hence, one may define characters on $\cR$ (i.e. algebra homomorphisms $\cR \to \BZ$) by specifying values on the $\St_{[a,b]}$'s. For any $i \in \BC$, set
	\begin{equation*}
		\phi_i : \cR \aro \BZ,\quad
		\St_{[a,b]} \mapsto \delta_{[i], [a,b]} \; (\text{Kronecker symbol}).
	\end{equation*}
	The left and right \textit{partial Bernstein-Zelevinskii (BZ) derivatives} are given by
	\begin{equation*}
		{}^i\sD = m \circ (\phi_i \otimes 1) \circ \bc : \cR \aro \cR,
	\end{equation*}
	\begin{equation*}
		\sD^i = m \circ (1 \otimes \phi_i) \circ \bc : \cR \aro \cR,
	\end{equation*}

	\begin{remark}
		The relation between the operators ${}^i\sD$, $\sD^i$ and the derivatives originally defined by Bernstein and Zelevinskii in \cite{Bernstein-Zelevinskii} can be found in \cite{Deng:symm}, see remark after Lemma 2.25.
	\end{remark}
	
	\begin{remark}
		On generators, ${}^i \sD$ is given by
		\begin{equation*}
			{}^i \sD(\St_{[a,b]}) =
			\begin{cases}
				\St_{[a,b]} & \text{if } b \neq i,\\
				\St_{[a,b]} + \St_{[a,b-1]} & \text{if } a < b = i,\\
				\St_{[a,b]} + 1 & \text{if } a = b = i.
			\end{cases}
		\end{equation*}		
		This can be obtained from applying the Zelevinskii dual to the formulas in \cite{Deng:symm} Definition 2.19 and Lemma 2.25. There is a similar formula for $\sD^i$.
	\end{remark}
\end{clause}

\begin{clause}[Lusztig's induction, {\cite{Lusztig:quiver}}]\label{cls:Lind}
	We continue to use notations from \textsection \ref{subsec:LLC/Qp}. Suppose $\varphi$, $\psi$, and $\psi'$ are weight functions $\BC \to \BN$ such that $\varphi = \psi + \psi'$. Let $V_\varphi$, $V_\psi$, and $V_{\psi'}$ be the corresponding graded vector spaces. 
	
	Consider the following spaces
	\begin{align*}
		E_{\varphi,\psi'} 
		&= \left\{
		\Big( 
		\begin{array}{c}
			\text{graded subspace}\\
			W \subset V_\varphi
		\end{array},
		T \in E_\varphi
		\Big)
		\mid 
		\begin{array}{l}
			W \cong V_{\psi'} \text{ as graded vector spaces},\\
			V_\varphi/W \cong V_\psi \text{ as graded vector spaces},\\
			W \text{ is $T$-stable}.
		\end{array}
		\right\}\\
		\widetilde{E_{\varphi,\psi'}} 
		&= \left\{
		\Big(
		\begin{array}{c}
			\text{graded short exact sequence}\\
			0 \to V_{\psi'} \xrightarrow{\iota} V_\varphi \xrightarrow{\eps} V_\psi \to 0
		\end{array},
		T \in E_\varphi \Big)
		\mid
		\text{the image $\iota(V_{\psi'})$ is $T$-stable}
		\right\}
	\end{align*}
	equipped with natural actions of $G_\varphi$ and of $G_\psi \times G_{\psi'} \times G_\varphi$, respectively. Explicitly,
	\begin{equation*}
		G_\varphi \acts E_{\varphi,\psi'} \text{ by }
		g \cdot (W,T) = ( g \cdot W, g \circ T \circ g\inv);
	\end{equation*}
	\begin{equation*}
		G_\psi \times G_{\psi'} \times G_\varphi \acts \widetilde{E_{\varphi,\psi'}} \text{ by }
		(h,h', g) \cdot \big( V_{\psi'} \xrightarrow{\iota} V_\varphi \xrightarrow{\eps} V_\psi, T \big)
		= \big( V_{\psi'} \xrightarrow{g \circ \iota \circ h'{}\inv} V_\varphi \xrightarrow{h \circ \eps \circ g\inv} V_\psi, g \circ T \circ g\inv \big).
	\end{equation*}
	
	We have natural surjections
	\begin{equation}\label{eqn:induction-diag}
		E_\psi \times E_{\psi'} \xtwoheadleftarrow{\beta} \widetilde{E_{\varphi,\psi'}} \xtwoheadrightarrow{\tilde p} 
		E_{\varphi,\psi'} \xtwoheadrightarrow{p}
		E_\varphi
	\end{equation}
	where $\tilde p$ and $p$ are forgetful maps, and
	\begin{equation*}
		\beta\big( V_{\psi'} \xrightarrow{\iota} V_\varphi \xrightarrow{\eps} V_\psi, T \big) 
		= ( \eps \circ \bar T \circ \eps\inv, \iota\inv \circ T|_{\im \iota} \circ \iota )
	\end{equation*}
	where $\bar T$ is the endomorphism on $V_\varphi/\im\iota$ induced by $T$.
	These maps 
	are equivariant with respect to the natural projections
	\begin{equation*}
		G_\psi \times G_{\psi'} \twoheadleftarrow G_\psi \times G_{\psi'} \times G_\varphi \twoheadrightarrow G_\varphi = G_\varphi,
	\end{equation*}	
	so we obtain maps on quotient stacks
	\begin{equation*}
		[(G_\psi \times G_{\psi'}) \backslash (E_\psi \times E_{\psi'})] 
		\xleftarrow{\beta} 
		[(G_\psi \times G_{\psi'}) \backslash \widetilde{E_{\varphi,\psi'}} / G_\varphi ]
		\xrightarrow{\tilde p}
		[E_{\varphi,\psi'}/G_\varphi]
		\xrightarrow{p}
		[E_\varphi/ G_\varphi].
	\end{equation*}
	It is easy to see that $\tilde p$ is a $G_\psi \times G_{\psi'}$-principal bundle, so that the map 
	$[(G_\psi \times G_{\psi'}) \backslash \widetilde{E_{\varphi,\psi'}} / G_\varphi ]
	\to
	[E_{\varphi,\psi'}/G_\varphi]$
	is an isomorphism. We hence have functors
	\begin{equation*}
		D^b(E_\psi \times E_{\psi'}, G_\psi \times G_{\psi'}) 
		\xrightarrow{\beta^*}
		D^b(\widetilde{E_{\varphi,\psi'}}, G_\psi \times G_{\psi'} \times G_\varphi)
		\xleftarrow[\cong]{\tilde p^* } 
		D^b(E_{\varphi,\psi'}, G_\varphi)
		\xrightarrow{p_*}
		D^b(E_\varphi, G_\varphi)
	\end{equation*}
	
	\begin{definition}
		\textbf{Lusztig's induction} is the composition of the above functors
		\begin{equation*}
			\LInd_{\psi,\psi'}^\varphi = p_* \circ (\tilde p^*)\inv \circ \beta^*:
			D^b(E_\psi \times E_{\psi'}, G_\psi \times G_{\psi'}) \aro
			D^b(E_\varphi, G_\varphi).
		\end{equation*}
	\end{definition}
	
	Similar to the study of BZ derivatives, we can collect Lusztig inductions together. Let
	\begin{equation*}
		\cR^* = 
		\prod_{\substack{
				\varphi: \BC \to \BN\\
				\varphi \text{ comes from an}\\
				\text{infinitesimal charater}}} 
		K D^b( E_\varphi, G_\varphi)
	\end{equation*}
	whose elements are written additively as (possibly infinite) $\BZ$-linear combinations of elements of $K D^b( E_\varphi, G_\varphi)$. The pairing in \ref{prop:LLC-GLmQp} induces a pairing
	\begin{equation}\label{eqn:LLC-GLmQp-pairing-sum}
		\langle -,- \rangle: \cR \times \cR^* \aro \BZ.
	\end{equation}
	For each $k \in \BC$, by abuse of notation we write $[k]: \BC \to \{0,1\}$ for the characteristic function at $k$, and for any integer $c \in \BN$, we write $c[k]$ for $c$ times the function $[k]$. We then set
	\begin{equation*}
		\LInd_{\BN[k]} : \cR^* \aro \cR^*,\quad
		\cF_\psi \mapsto \sum_{c \ge 0} \LInd_{\psi, c[k]}^{\psi + c[k]} \big( \cF_\psi \boxtimes \ubar \BC \big),
	\end{equation*}
	\begin{equation*}
		{}_{\BN[k]}\!\LInd : \cR^* \aro \cR^*,\quad
		\cF_\psi \mapsto \sum_{c \ge 0} \LInd_{c[k], \psi}^{\psi + c[k]} \big( \ubar \BC \boxtimes \cF_\psi \big)
	\end{equation*}
	for any $\cF_\psi \in K D^b(E_\psi, G_\psi)$. 
\end{clause}

It is shown in \cite[Theorem 4.8]{Deng:BZ} that partial BZ derivatives and Lusztig's inductions are adjoint to each other under Langlands duality \ref{prop:LLC-GLmQp}. More precisely, we have the following theorem

\begin{theorem}
	Let $k \in \BC$. The operators ${}^k \sD$ and $\LInd_{\BN[k]}$ (resp. $\sD^k$ and ${}_{\BN[k]}\!\LInd$) are adjoint under the pairing (\ref{eqn:LLC-GLmQp-pairing-sum}), that is
	\begin{equation*}
		\langle {}^k\sD M, \cF \rangle = \langle M, \LInd_{\BN[k]} \cF \rangle
		\quad (\text{resp. } \langle \sD^k M, \cF \rangle = \langle M, {}_{\BN[k]}\!\LInd \cF \rangle )
	\end{equation*}
	for any $M \in \cR$ and $\cF \in \cR^*$.
\end{theorem}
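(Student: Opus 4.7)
The plan is to leverage the Hopf algebra structure of $\cR$ to reduce the claim to a single bilinear adjunction between Jacquet restriction and Lusztig induction. Recall the factorization
\[
    \sD^k \;=\; m \circ (1 \otimes \phi_k) \circ \bc,
\]
where $m$ is the $\BZ$-linear scalar action, $\phi_k$ is the algebra character picking out the singleton segment $[k]$, and $\bc$ is Jacquet restriction. Taking adjoints on the sheaf side under the pairing \eqref{eqn:LLC-GLmQp-pairing-sum} and composing in reverse should produce $\LInd_{\BN[k]}$; the content of the proof is to verify this explicitly.

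The core geometric input is the bilinear adjunction
\begin{equation*}
    \langle \bc(M), \cF_\psi \boxtimes \cG_{\psi'} \rangle \;=\; \langle M, \LInd_{\psi, \psi'}^{\psi + \psi'}(\cF_\psi \boxtimes \cG_{\psi'}) \rangle
\end{equation*}
for any $M \in \cR$, $\cF_\psi \in KD^b(E_\psi, G_\psi)$, and $\cG_{\psi'} \in KD^b(E_{\psi'}, G_{\psi'})$. By the formal adjunction between parabolic induction and Jacquet restriction on the representation side, establishing this is equivalent to showing that LLC intertwines $\nInd$ with $\LInd$. I would verify the latter directly on standard objects: $X_\bm$ is by definition a parabolic induction of segment Steinbergs $\St_{[a_i,b_i]}$, while $\cM_\bm$ admits an analogous inductive construction through the correspondence \eqref{eqn:induction-diag} (this is essentially Ariki's categorification theorem, cf.\ Chriss--Ginzburg), and the signs from \ref{prop:LLC-GLmQp} cancel consistently.

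Granting this, the remaining step is to identify the adjoint $\phi_k^* : \BZ \to \cR^*$ of the character $\phi_k$. Multiplicativity and $\phi_k(\St_{[a,b]}) = \delta_{[a,b], [k]}$ give $\phi_k(X_\bm) = 1$ exactly when $\bm = c[k]$ for some $c \ge 0$. Under LLC the dual element to $X_{c[k]}$ is $(-1)^{\dim O_{c[k]}} \cM_{c[k]}$; but $E_{c[k]}$ is a single point (since $\varphi(k+1) = 0$ in the weight $c[k]$), so $\dim O_{c[k]} = 0$ and $\cM_{c[k]} = \ubar{\BC}$. Hence $\phi_k^*(1) = \sum_{c \ge 0} \ubar{\BC}_{E_{c[k]}}$. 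Substituting into the factorization of $\sD^k$ and applying the bilinear adjunction factor-by-factor,
\begin{align*}
    \langle \sD^k M, \cF_\psi \rangle
    &= \langle \bc(M), \cF_\psi \otimes \phi_k^*(1) \rangle
    = \sum_{c \ge 0} \langle \bc(M), \cF_\psi \boxtimes \ubar{\BC}_{E_{c[k]}} \rangle \\
    &= \sum_{c \ge 0} \langle M, \LInd_{\psi, c[k]}^{\psi + c[k]}(\cF_\psi \boxtimes \ubar{\BC}) \rangle
    = \langle M, \LInd_{\BN[k]} \cF_\psi \rangle,
\end{align*}
which yields the first identity. The left-handed case is proven by the mirror argument, replacing $(1 \otimes \phi_k)$ with $(\phi_k \otimes 1)$ and $\LInd_{\psi, c[k]}$ with $\LInd_{c[k], \psi}$ throughout. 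The hard part will be the bilinear adjunction of the second paragraph: while the representation-theoretic adjunction of $\nInd$ and $\bc$ is standard, transporting it across the LLC pairing is the only place where non-trivial geometry enters and ultimately reduces to matching the two sides on standard objects via the correspondence \eqref{eqn:induction-diag}; once this is secured the remaining computation is a transparent bookkeeping of adjunctions.
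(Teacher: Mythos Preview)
The paper does not prove this theorem; it appears in the preliminaries section and is attributed to \cite[Theorem 4.8]{Deng:BZ}. So there is no in-paper proof to compare against. Your outline is a reasonable reconstruction of how such a result is established, and the formal layer---factoring $\sD^k$ through the Hopf structure, computing $\phi_k^*(1) = \sum_{c \ge 0} \ubar\BC_{E_{c[k]}}$ using that $E_{c[k]}$ is a point, and collapsing the sum---is correct.

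One caution on the substantive step. Your reduction of the bilinear adjunction
\[
\langle \bc(M), \cF_\psi \boxtimes \cG_{\psi'} \rangle = \langle M, \LInd_{\psi,\psi'}^{\psi+\psi'}(\cF_\psi \boxtimes \cG_{\psi'}) \rangle
\]
to ``LLC intertwines $\nInd$ with $\LInd$'' via the categorical adjunction between $\nInd$ and $\rJ$ is not clearly valid as stated. Frobenius reciprocity is an identity of $\Hom$ (or Euler) pairings \emph{within} $\Rep$, whereas the pairing in the theorem is between $K\Rep$ and $KD^b(E_\varphi, G_\varphi)$; these are different bilinear forms, and the categorical adjunction does not transport across automatically. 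Your fallback---verifying directly on standard objects via the correspondence \eqref{eqn:induction-diag}---is the correct route and is what the cited reference actually does: one checks that $\LInd(\cM_\bn \boxtimes \cM_{\bn'}) = \cM_{\bn + \bn'}$ up to a dimension shift matching the sign conventions of Proposition \ref{prop:LLC-GLmQp}, and pairs this against Zelevinskii's formula for $\bc$ on standard modules. Once that is in hand, your final display goes through as written.
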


\begin{proof}
	Since the shape of the statement is different from the one in \cite[Theorem 4.8]{Deng:BZ}, we explain how to make the transition. Since we are working over Grothendieck groups, it suffices to prove the statement for basis elements. On the representation side, we take the basis formed by standard representations $X_\ba$, and on the parameter side, we take the basis given by simple perverse sheaves $\cL_\bb$, where $\ba$ and $\bb$ are any multisegments. We need to prove the equality
	\begin{equation}\label{eqn:der-vs-LInd-a}
		\langle {}^k\sD X_\ba, \cL_\bb \rangle = \langle X_\ba, \LInd_{\BN[k]} \cL_\bb \rangle.
	\end{equation}
	
	Recall that for any object $\cF \in K D^b(E_\varphi, G_\varphi)$ and $M$ arbitrary, the pairing $\langle M, \cF\rangle$ is nonzero only if $M$ has a nonzero summand in $K\Rep_\Lambda(\GL_m(\BQ_p))$, where $\Lambda$ is the infinitesimal character corresponding to $\varphi$. Similarly, for $M \in K\Rep_\Lambda(\GL_m(\BQ_p))$ and $\cF$ arbitrary, $\langle M, \cF\rangle$ is nonzero only if $\cF$ has a nonzero summand in $K D^b(E_\varphi, G_\varphi)$. Therefore, if we let $\varphi$ and $\psi$ be the weight functions of $\ba$ and $\bb$, respectively, then $\langle X_\ba, \LInd_{\BN[k]} \cL_\bb \rangle$ is nonzero if and only if $\varphi = \psi + c[k]$ for some $c \in \BZ_{\ge 0}$, if and only if $\langle {}^k\sD X_\ba, \cL_\bb \rangle$ is nonzero. Therefore, it suffices to consider the case when $\varphi = \psi + c[k]$ for some $c \in \BZ_{\ge 0}$, where (\ref{eqn:der-vs-LInd-a}) reduces to
	\begin{equation}\label{eqn:der-vs-LInd-c}
		\langle {}^k\sD X_\ba, \cL_\bb \rangle = \langle X_\ba, \LInd_{\psi,c[k]}^\varphi \cL_\bb \rangle.
	\end{equation}
	
	To proceed, let us introduce more notations. For any segment $[a,b]$ and any multisegment $\bm = \sum_{j=1}^k c_j[a_j,b_j]$, order the segments so that $\operatorname{Re}(a_j+b_j) \ge \operatorname{Re}(a_{j+1}+b_{j+1})$, and let 
	\begin{gather*}
		L_{[a,b]} := 
		\begin{array}{l}
			\text{unique irreducible subrepresentation of }\\
			\nInd_{\GL_1(F) \times \cdots \times \GL_1(F)}^{\GL_{b-a+1}(F)} \big( |\cdot|^a \boxtimes |\cdot|^{a+1} \boxtimes \cdots \boxtimes |\cdot|^b \big),
		\end{array},
		\\
		\pi_\bm := \nInd_{\GL_{b_1-a_1+1}(F)^{c_1} \times \cdots \times \GL_{b_k-a_k+1}(F)^{c_k}}^{\GL_m(F)} \Big( L_{[a_1,b_1]}{}^{\boxtimes \; c_1} \boxtimes \cdots \boxtimes L_{[a_k,b_k]}{}^{\boxtimes \; c_k} \Big),
		\\
		L_\bm := \text{unique irreducible subrepresentation of } \pi_\bm.
	\end{gather*}
	Let $\BD$ denote the Zelevinskii duality \cite{Zelevinskii:GLn}. Then at the level of Grothendieck groups we have $\BD(L_{[a,b]}) = \St_{[a,b]}$, $\BD(\pi_\bm) = X_\bm$ and $\BD(L_\bm) = \bar X_\bm$. Moreover, it is straightforward to check that $\BD \sD^k \BD = {}^k \sD$. In the situation where $\varphi = \psi + c[k]$, \cite[Theorem 4.8]{Deng:BZ} shows that
	\begin{equation}\label{eqn:der-vs-LInd-b}
		n(\bb,\ba) =  \sum_i (-1)^i\rank \cH^i\Big( \LInd_{\psi, c[k]}^\varphi \cL_\bb [\dim O_\bb] \Big)|_{O_\ba}.
	\end{equation}
	Here $n(\bb,\ba)$ is the multiplicity of $L_\bb$ inside $\sD^k \pi_\ba$ (see \cite[Proposition 2.6]{Deng:BZ}). By applying $\BD$, $n(\bb,\ba)$ is also the multiplicity of $\bar X_\bb$ inside ${}^k\sD X_\ba$, or in our notations,
	\begin{equation*}
		n(\bb,\ba) = \langle {}^k\sD X_\ba, (\bar X_\bb)^* \rangle = \langle {}^k\sD X_\ba, (-1)^{\dim O_\bb} \cL_\bb \rangle.
	\end{equation*}
	On the other hand, the right hand side of (\ref{eqn:der-vs-LInd-b}) is the Euler characteristic of the pullback $i_a^* \LInd_{\psi, c[k]}^\varphi \cL_\bb [\dim O_\bb]$ to the orbit $O_\ba$. It is equal to multiplicity of $(-1)^{\dim O_\ba}\cM_\ba$ in $\LInd_{\psi, c[k]}^\varphi \cL_\bb [\dim O_\bb]$ in the Grothendieck group (see, for example, \cite[(7.11c), (7.11d)]{ABV}), which is equal to the number 
	\begin{equation*}
		\langle (-1)^{\dim O_\ba} (\cM_\ba)^*, \LInd_{\psi, c[k]}^\varphi \cL_\bb [\dim O_\bb] \rangle = \langle  X_\ba, (-1)^{\dim O_\bb} \LInd_{\psi, c[k]}^\varphi \cL_\bb \rangle,
	\end{equation*}
	Therefore (\ref{eqn:der-vs-LInd-b}) translates to (\ref{eqn:der-vs-LInd-c}), as required.
	
	The statement for $\sD^k$ and ${}_{\BN[k]}\!\LInd$ is analogous.
\end{proof}

\section{Comparisons of Langlands parameter spaces}\label{sec:compare-spaces}

In this section, we review two constructions that compare representations of $\GL_n(\BC)$ and of $\GL_m(\BQ_p)$, one is due to the first named author \cite{Deng:symm}, one due to Chan-Wong \cite{Chan-Wong:GLn}. The goal of this section is to show that these two constructions are compatible.

\subsection{Geometric comparison}\label{subsec:geom-comparison}

The geometric comparison of \cite{Deng:symm} is, in short, an open immersion from the ABV space $[\cY(\Lambda_\BR)/ (\GL_n(\BC)_\BC)_{\Lambda_\BR}]$ into the Vogan variety $[E_{\Lambda_p}/(\GL_m)_{\Lambda_p}]$ for certain compatible pairs $(n,\Lambda_\BR)$ and $(m, \Lambda_p)$. 

Let $\Lambda_p = \Lambda$ be an infinitesimal character for $\GL_m(\BQ_p)$ of the form described in \ref{cls:ELambda-as-Evarphi}. Recall from the discussions in  \ref{cls:ELambda-as-Evarphi}-\ref{cls:p-adic-closure-relations} that $\Lambda$ can be characterized by a weight function $\varphi$ that records the multiplicities of eigenvalues of $\Lambda(\ff)$, so that $E_\Lambda = E_\varphi$ and $(\GL_m)_\Lambda = G_\varphi$. In this subsection, we make the following assumption on $\Lambda$. 

\begin{namedtheorem}[Assumption]\label{assump:r}
	Let $\varphi = \sum_s \varphi_s$ be the decomposition of $\varphi$ into integral pieces (\ref{cls:integral-pieces}), and assume the support of $\varphi_s$ is contained in $a_s + \BZ$. For each $s$, there exists a number $r_s \in (a_s + \BQ) - (a_s + \BZ)$ such that 
	\begin{enumerate}
		\item $\varphi_s$ is weakly increasing on $(-\infty, \ceil {r_s}]$, i.e. $\varphi_s(i) \le \varphi_s(j)$ whenever $i \le j \le \ceil {r_s}$; and
		
		\item $\varphi_s$ is weakly decreasing on $[\floor {r_s}, \infty)$, i.e. $\varphi_s(i) \ge \varphi_s(j)$ whenever $\floor {r_s} \ge i \ge j$.
	\end{enumerate}
	In particular, $\varphi_s(\floor {r_s}) = \varphi_s(\ceil{r_s}) = \max_{i \in \supp \varphi_s} \varphi_s(i)$. Here $\floor {r_s}$ (resp. $\ceil {r_s}$) denotes the $a_s$-shifted floor function (resp. ceiling function), see \textsection \ref{subsec:gen-notn}. 
\end{namedtheorem}

These $\varphi$'s are exactly those that appear in the geometric comparison between Langlands parameter spaces of $\GL_n(\BC)$ and $\GL_m(\BQ_p)$. For an integral weight function $\varphi$, it satisfies Assumption \ref{assump:r} if and only if the open $G_\varphi$-orbit $O_\bm$ in $E_\varphi$ corresponds to a multisegment $\bm$ in which the segments are nested (i.e. for any two segments $[a,b]$, $[c,d]$ appearing in $\bm$, either $[a,b] \subseteq [c,d]$ or $[a,b] \supseteq [c,d]$), and the shortest segment has length $\ge 1$. 

\begin{example}
	Let $\bm = [-1,4] + [-1,3]+[0,2]+[1,2]$, or pictorially
	\begin{equation*}
		\bm =
		\begin{tikzcd}[start anchor = real center, end anchor = real center, row sep=1ex, column sep=1ex]
			{\makebox[0pt]{$-1$}} & {\makebox[0pt]{$0$}} &{\makebox[0pt]{$1$}} & {\makebox[0pt]{$2$}} & {\makebox[0pt]{$3$}} & {\makebox[0pt]{$4$}}\\
			\bullet \ar[r, dash] & \bullet \ar[r, dash] & \bullet \ar[r, dash] & \bullet \ar[r, dash] & \bullet \ar[r,dash] & \bullet \\
			\bullet \ar[r,dash] & \bullet \ar[r,dash] & \bullet \ar[r, dash] & \bullet \ar[r,dash] & \bullet\\
			& \bullet \ar[r, dash] & \bullet \ar[r,dash] & \bullet\\
			& & \bullet \ar[r,dash] & \bullet
		\end{tikzcd}.
	\end{equation*}
	Then the segments in $\bm$ are nested, and the shortest segment in $\bm$ is $[1,2]$ which has length $1$. The weight function $\varphi$ of $\bm$ satisfies Assumption \ref{assump:r} with $r = 3/2$.
\end{example}

Next, we introduce an open subset $\BO_\varphi$ of $E_\varphi$, the \textit{full rank part}, which will later be the image of the open immersion $[E_{\Lambda_p}/(\GL_m)_{\Lambda_p}] \hookleftarrow [\cY(\Lambda_\BR)/ (\GL_n(\BC)_\BC)_{\Lambda_\BR}]$.

\begin{clause}[The full rank part $\BO_\varphi \subset E_\varphi$]\label{cls:symm-part}
	\begin{definition}
		Suppose $\varphi$ is integral, $\supp \varphi \subset a + \BZ$, and $\varphi$ satisfies Assumption \ref{assump:r}. Write $T_i$ for the restriction of $T$ to $V_{\varphi,i}$. The  \textbf{full rank part} of $E_\varphi$ is 
		\begin{align*}
			\BO_\varphi 
			&:= \big\{ T \in E_\varphi \mid \text{each } T_i \in \Hom(V_{\varphi,i},V_{\varphi,i+1}) \text{ has maximal possible rank} \big\}\\
			&= \left\{ T \in E_\varphi \mid 
			\substack{%
				\displaystyle \forany i \le \ceil r, T_i \text{ is injective};\\
				\displaystyle \forany i \ge \floor r, T_i \text{ is surjective}}
			\right\} 
			\subset E_\varphi.
		\end{align*}
		Here the number $r$ is defined in Assumption \ref{assump:r}.
		
		If $\varphi$ satisfies Assumption \ref{assump:r} but not necessarily integral, let $\varphi = \sum_s \varphi_s$ be its decomposition into integral pieces. The \textbf{full rank part} of $E_\varphi$ is defined to be
		\begin{equation*}
			\BO_\varphi := \prod_s \BO_{\varphi_s}.
		\end{equation*}
	\end{definition}
\end{clause}

The full rank part $\BO_\varphi$ is a $G_\varphi$-stable subvariety in $E_\varphi$. In particular it is a union of finitely many orbits. If $\varphi$ is integral, an orbit $O_\bm \subset E_\varphi$ is contained in $\BO_\varphi$ if and only if there are $n$ segments in $\bm$ where $n = \varphi(\floor r) = \varphi(\ceil r) = \max_{i \in \BC} \varphi(i)$. It is not difficult to see from the closure relation \ref{cls:p-adic-closure-relations} that $\BO_\varphi$ is open in $E_\varphi$. 

When $\varphi$ is a regular integral infinitesimal character (see \ref{cls:integral-pieces}), the full rank part is called the \textit{symmetric part} in \cite{Deng:symm} since in this case there is an isomorphism of posets $G_\varphi \backslash \BO_\varphi \cong S_n$ (see Theorem \ref{thm:symm-orbit-structure} below), where the left side is equipped with the closure order of orbits and the right side is the symmetric group and is equipped with the Bruhat order.

Now consider 
\begin{align*}
	\ubar E_{\varphi_s} 
	&:= \bigoplus_{\substack{i \in \supp \varphi_s \\ i \neq \floor {r_s}}} \Hom_\BC(V_{\varphi,i},V_{\varphi,i+1}),\\
	\ubar E_\varphi
	&:= \prod_s \ubar E_{\varphi_s},
\end{align*}
and the canonical projections $E_{\varphi_s} \surj \ubar E_{\varphi_s}$, $E_\varphi \surj \ubar E_\varphi$, $T \mapsto \ubar T$ by forgetting the component $T_{\floor {r_s}} \in \Hom_\BC(V_{\varphi,\floor {r_s}}, V_{\varphi,\ceil {r_s}})$. Let $\ubar \BO_{\varphi_s}$, $\ubar \BO_\varphi$ denote the images of $\BO_{\varphi_s}$, $\BO_\varphi$ under this projection. For any element $\ubar T \in \ubar \BO_\varphi$, we write $\BO_\varphi|_{\ubar T}$ for its preimage along the projection $\BO_\varphi \surj \ubar \BO_\varphi$.

The following theorems outline the key properties that enable the construction of the desired immersion. Write $G_{\varphi, \ubar T}$ for the stabilizer of $\ubar T$, and write $G_{\varphi, \ubar T}^l$ and $G_{\varphi, \ubar T}^r$ for the images of the projection $G_\varphi \surj \GL(V_{\varphi, \floor r})$ and $G_\varphi \surj \GL(V_{\varphi, \ceil r})$, respectively (recall that $G_\varphi = \prod_{i \in \supp \varphi} \GL(V_{\varphi,i})$ is equipped with natural projections to each $\GL(V_{\varphi,i})$).

\begin{theorem}[Deng]\label{thm:symm-orbit-structure}
	Suppose $\varphi$ is integral and satisfies Assumption \ref{assump:r}.
	\begin{enumerate}
		\item $\ubar \BO_\varphi$ is a single $G_\varphi$-orbit in $\ubar E_\varphi$.
		
		\item For any point $\ubar T \in \ubar \BO_\varphi$, the fiber $\BO_\varphi|_{\ubar T}$ is canonically isomorphic to $\GL(V_{\varphi,\floor r}, V_{\varphi,\ceil r})$, the set of full rank elements in $\Hom_\BC(V_{\varphi,\floor r}, V_{\varphi, \ceil r})$.
		
		\item 
		$G_{\varphi, \ubar T}^l$ is a parabolic subgroup of $\GL(V_{\varphi, \floor r})$ equal to the stabilizer of the flag
		\begin{equation*}
			V_{\varphi, \floor r} \supset \ubar T (V_{\varphi, \floor r-1}) \supset \ubar T^2 (V_{\varphi, \floor r-2}) \supset \cdots \supset 0.
		\end{equation*}
		The Levi of $G_{\varphi, \ubar T}^l$ is isomorphic to 
		\begin{equation*}
			\cdots \times \GL_{\varphi(\floor r-1) - \varphi(\floor r -2)} \times \GL_{\varphi(\floor r) - \varphi(\floor r -1)}
		\end{equation*}
		(when $\varphi(i) = \varphi(i-1)$, we ignore the factor $\GL_{\varphi(i) - \varphi(i-1)}$). Similarly, 
		$G_{\varphi, \ubar T}^r$ is a parabolic subgroup of $\GL(V_{\varphi, \ceil r})$ equal to the stabilizer of the flag 
		\begin{equation*}
			0 \subset \ker \ubar T|_{V_{\varphi, \ceil r}} \subset \ker \ubar T^2|_{V_{\varphi, \ceil r}} \subset \cdots \subset V_{\varphi,\ceil r}.
		\end{equation*}
		The Levi of $G_{\varphi, \ubar T}^r$ is isomorphic to 
		\begin{equation*}
			\GL_{\varphi(\ceil r) - \varphi(\ceil r+1)} \times \GL_{\varphi(\ceil r +1) - \varphi(\ceil r+2)} \times \cdots.
		\end{equation*}
		
		The product of the two projections
		\begin{equation*}
			G_{\varphi, \ubar T} \surjects G_{\varphi, \ubar T}^l \times G_{\varphi, \ubar T}^r
		\end{equation*}
		is an isomorphism.
		
		\item We have a commutative diagram
		\begin{equation*}
			\begin{tikzcd}
				G_{\varphi, \ubar T} \ar[r, phantom, "\acts" description] \ar[d, "\cong"]
				& \BO_\varphi|_{\ubar T} \ar[d, "\cong"]\\
				G_{\varphi, \ubar T}^l \times G_{\varphi, \ubar T}^r \ar[r, phantom, "\acts"]
				& \GL(V_{\varphi, \floor r}, V_{\varphi, \ceil r})
			\end{tikzcd}
		\end{equation*}
		and hence an isomorphism  
		\begin{equation*}
			\big[G_\varphi \backslash \BO_\varphi \big] =
			\big[ G_{\varphi, \ubar T} \backslash (\BO_\varphi|_{\ubar T}) \big] \cong 
			\big[ G_{\varphi, \ubar T}^r \backslash \GL(V_{\varphi, \floor r}, V_{\varphi, \ceil r}) / G_{\varphi, \ubar T}^l \big].
		\end{equation*}
	\end{enumerate}
	
	If $\varphi$ is non-integral, let $\varphi = \sum_s \varphi_s$ be its decomposition into integral pieces (\ref{cls:integral-pieces}). Then the above properties hold for each $\varphi_s$.
\end{theorem}

Note that the parabolic $G_{\varphi, \ubar T}^l$ labeled by the superscript ``\textit{left}'' actually acts on $\GL(V_{\varphi, \floor r}, V_{\varphi, \ceil r})$ on the \textit{right} since it acts by twisting maps $V_{\varphi, \floor r} \to V_{\varphi, \ceil r}$ on the source. Similarly the group $G_{\varphi, \ubar T}^r$ acts on the \textit{left} of $\GL(V_{\varphi, \floor r}, V_{\varphi, \ceil r})$.

If $\varphi$ is integral and regular, then the parabolic subgroups $G_{\varphi, \ubar T}^l$ and $G_{\varphi, \ubar T}^r$ are Borels. In this case the proof is contained in \cite[Theorem 4.12]{Deng:symm} (see also \cite[\S 10.7]{Kirillov:quiver} for a treatment in the language of quiver representations). We include an argument for integral $\varphi$ for completeness. The non-integral cases can be easily reduced to the integral case.

\begin{proof}[Proof of \ref{thm:symm-orbit-structure} for integral $\varphi$]~
	\begin{enumerate}
		\item We may decompose $\ubar E_\varphi = \ubar E_{\varphi, <r} \times \ubar E_{\varphi, >r}$, where $\ubar E_{\varphi,<r} = \prod_{i <r-1} \Hom_\BC(V_{\varphi,i}, V_{\varphi,i+1})$ and $\ubar E_{\varphi,>r} = \prod_{i >r} \Hom_\BC(V_{\varphi,i}, V_{\varphi,i+1})$. We have similar decompositions for $V_\varphi$, $\ubar \BO_\varphi$, and $G_\varphi$. Then the action $G_\varphi \acts \ubar \BO_\varphi$ is isomorphic to the product of $G_{\varphi,<r} \acts \ubar \BO_{\varphi,<r}$ with $G_{\varphi,>r} \acts \ubar \BO_{\varphi,>r}$. It is enough to show that the latter two actions are transitive. By taking $\BC$-linear dual, the $>r$ case is reduced to the $<r$ case.
		
		Let $i_0 \in \supp \varphi$ be the smallest number in $\supp \varphi$. Suppose $\ubar T, \ubar S \in \ubar \BO_{\varphi,<r}$ are two operators, each representing a chain of inclusions $V_{\varphi,i_0} \inj V_{\varphi,i_0+1} \inj \cdots \inj V_{\varphi,\floor r}$. Choose a homogeneous basis $\cE_i$ (viewed as an ordered set) of $V_{\varphi,i}$. Then $\ubar T(\cE_{i_0})$ and $\ubar S(\cE_{i_0})$ are two (possibly different) linearly independent subsets of $V_{\varphi,i_0+1}$, and so there exists an element $g_{i_0+1} \in \GL(V_{\varphi,i_0+1})$ so that $g_{i_0+1} (\ubar T( \cE_{i_0})) = \ubar S( \cE_{i_0})$. Similarly, $\ubar T( g_{i_0+1}\inv (\cE_{i_0+1}))$ and $\ubar S(\cE_{i_0+1})$ are two linearly independent subsets of $V_{\varphi, i_0+2}$, and there exists $g_{i_0+2} \in \GL(V_{\varphi,i_0+2})$ so that $g_{i_0+2}( \ubar T( g_{i_0+1}\inv (\cE_{i_0+1}))) = \ubar S(\cE_{i_0+1})$. Repeating this process, we obtain an element $g = (1,g_{i_0+1}, g_{i_0+2},\ldots, g_{\floor r}) \in G_{\varphi,<r}$ so that $g \circ \ubar T \circ g\inv$ and $\ubar S$ agree on the bases $\cE_i$'s. This implies $g \circ \ubar T \circ g\inv = \ubar S$, and (1) is proved.
		
		\item By construction, the fiber of $E_\varphi \surj \ubar E_\varphi$ is equal to $\{\ubar T\} \times \Hom_\BC(V_{\varphi,\floor r}, V_{\varphi, \ceil r})$. In other words, different elements $T \in E_\varphi$ in the fiber of $\ubar T$ can only be different at $T_{\floor r} : V_{\varphi, \floor r} \to V_{\varphi, \ceil r}$ since the other degrees are already determined by $\ubar T$. By definition, such a $T$ has full rank if and only if $T_{\floor r}$ is both injective and surjective, i.e. is an isomorphism. So the fiber $\BO_\varphi|_{\ubar T}$ is equal to $\{\ubar T\} \times \GL(V_{\varphi,\floor r}, V_{\varphi, \ceil r})$.
		
		\item We first choose a basis of $V$ that plays well with $\ubar T$. 
		
		\begin{clause}[Basis adapted to $\ubar T$]\label{cls:adapted-basis}
			For a fixed element $\ubar T$, we fix a basis $\cE_i = \{v_i^1,\ldots, v_i^{\varphi(i)}\}$ of $V_{\varphi,i}$ satisfying the following properties: 
			\begin{itemize}
				\item For $i <r$, we require $\ubar T(v_i^j) = v_{i+1}^j$ for any $j$, and
				\item for $i >r$, we require $\ubar T(v_i^j) = 
				\begin{cases}
					v_{i+1}^j & \text{if } j \le \varphi(i+1),\\
					0 & \text{if } j > \varphi(i+1).
				\end{cases}$.
			\end{itemize}
			This is possible since $\ubar T$ is injective on degrees $<r$ and surjective in degrees $>r$.
		\end{clause}
		
		Returning to the proof, we first consider the statement for $G_{\varphi, \ubar T}^l$.
		The matrix of $\ubar T_i := \ubar T|_{V_{\varphi,i}}$ in the above bases is 
		\begin{equation*}
			\ubar T_i = 
			\begin{pmatrix}
				I_{\varphi(i)}\\
				0
			\end{pmatrix}_{\varphi(i+1) \times \varphi(i)}
		\end{equation*}
		where $I_d$ stands for the identity matrix of size $d \times d$. Let $g = (\ldots, g_i, g_{i+1}, \ldots) \in G_{\varphi,\ubar T,<r}$ (the subscript $<r$ is defined at the beginning of the proof of (1) above). By definition $g_{i+1} \circ \ubar T_i \circ g_i\inv = \ubar T_i$. Identify $g_i$ with its matrix in the above chosen basis, and write $g_{i+1}$ as a block matrix as
		\begin{equation*}
			g_{i+1} = 
			\begin{pmatrix}
				g_{i+1}^{11} & g_{i+1}^{12}\\
				g_{i+1}^{21} & g_{i+1}^{22}
			\end{pmatrix}, \quad
			g_{i+1}^{11} \text{ has size } \varphi(i) \times \varphi(i).
		\end{equation*}
		Then the equation $g_{i+1} \circ \ubar T_i \circ g_i\inv = \ubar T_i$ can be rewritten in matrix form as
		\begin{equation*}
			\begin{pmatrix}
				g_{i+1}^{11} g_i\inv\\
				g_{i+1}^{21} g_i\inv
			\end{pmatrix}
			=
			\begin{pmatrix}
				g_{i+1}^{11} & g_{i+1}^{12}\\
				g_{i+1}^{21} & g_{i+1}^{22}
			\end{pmatrix}
			\begin{pmatrix}
				I_{\varphi(i)}\\
				0
			\end{pmatrix}
			g_i\inv = 
			\begin{pmatrix}
				I_{\varphi(i)}\\
				0
			\end{pmatrix},
		\end{equation*}
		which implies $g_{i+1}^{21} = 0$ and both $g_{i+1}^{11} = g_i$ and $g_{i+1}^{22}$ are invertible. In other words, 
		\begin{equation*}
			g_{i+1} = 
			\begin{pmatrix}
				g_i & *\\
				0 & g_{i+1}'
			\end{pmatrix}
			\text{ for some } g_{i+1}' := g_{i+1}^{22} \in \GL_{\varphi(i+1)-\varphi(i)}.			
		\end{equation*}
		Running this argument inductively, we see that there exist $g_{i+1}' \in \GL_{\varphi(i+1) - \varphi(i)}$ so that
		\begin{equation*}
			g = \left(
			g_{i_0}',
			\begin{pmatrix}
				g_{i_0}' & *\\
				0 & g_{i_0+1}'
			\end{pmatrix},
			\begin{pmatrix}
				g_{i_0}' & * & *\\
				0 & g_{i_0+1}' & *\\
				0 & 0 & g_{i_0+2}'
			\end{pmatrix},
			\ldots,
			\begin{pmatrix}
				g_{i_0}' & * & \cdots & *\\
				0 & g_{i_0+1}' & \ddots & \vdots\\
				\vdots & \ddots & \ddots &  *\\
				0 & \cdots & 0 & g_{\floor r}'
			\end{pmatrix}
			\right)
		\end{equation*}
		where $i_0$ is the smallest number in $\supp \varphi$. Here if $\varphi(i) = \varphi(i+1)$, we delete the rows and the columns containing $g_{i+1}'$. Therefore, the map
		\begin{equation*}
			G_{\varphi, \ubar T, <r} \aro \GL(V_{\varphi,\floor r}),\quad
			g \mapsto 
			\begin{pmatrix}
				g_{i_0}' & * & \cdots & *\\
				0 & g_{i_0+1}' & \ddots & \vdots\\
				\vdots & \ddots & \ddots &  *\\
				0 & \cdots & 0 & g_{\floor r}'
			\end{pmatrix}
		\end{equation*}
		is injective whose image $G_{\varphi, \ubar T}^l$ is a parabolic subgroup whose Levi is $\prod_{i < r} \GL_{\varphi(i)- \varphi(i-1)}$. 
		
		The case for $G_{\varphi, \ubar T}^r$ is similar, except the matrices are now block \textit{lower} triangular.
		
		Since $G_{\varphi, \ubar T} = G_{\varphi, \ubar T, <r} \times G_{\varphi, \ubar T, >r}$, we see that the combination of the two projections $G_{\varphi, \ubar T} \to G_{\varphi, \ubar T}^l \times G_{\varphi, \ubar T}^r$ is an isomorphism.
		
		\item The two commutative diagrams can be checked straightforwardly, for example by using the matrix description in the proof of part (3). Then the isomorphism 
		$\big[ G_{\varphi, \ubar T} \backslash (\BO_\varphi|_{\ubar T}) \big] \cong 
		\big[ G_{\varphi, \ubar T}^r \backslash \GL(V_{\varphi, \floor r}, V_{\varphi, \ceil r}) / G_{\varphi, \ubar T}^l \big]$ follows from the first diagram. 
		
		For the identification 
		\begin{equation}\label{eqn:slice-in-full-rank-part}
			\big[G_\varphi \backslash \BO_\varphi \big] =
			\big[ G_{\varphi, \ubar T} \backslash (\BO_\varphi|_{\ubar T}) \big],
		\end{equation}
		recall the following general fact: if $f: X \to Y$ is a $G$-equivariant morphism of $G$-varieties and $Y$ is a homogeneous space for $G$, then for any point $y \in Y$ we have an isomorphism $X \cong G \times_{\operatorname{Stab}_G(y)} f\inv(y)$ so that the map $f$ is isomorphic to the projection $G \times_{\operatorname{Stab}_G(y)} f\inv(y) \surj G/\operatorname{Stab}_G(y) \cong Y$. As a result,
		\begin{equation*}
			[G \backslash X] \cong [G \backslash (G \times_{\operatorname{Stab}_G(y)} f\inv(y))] \cong [\operatorname{Stab}_G(y) \backslash f\inv(y)]
		\end{equation*}
		where the second isomorphism is the induction equivalence  \cite[Theorem 6.5.10, Exercise 6.8.1]{Achar:book}. Applying this to $G = G_\varphi$, $X = \BO_\varphi$, $Y = \ubar \BO_\varphi$ and $y = \ubar T$, we obtain (\ref{eqn:slice-in-full-rank-part}), as desired. \qedhere
	\end{enumerate}
\end{proof}

\begin{example}\label{eg:geom-comparison}
	Let $\varphi$ be the weight function of $\bm = [-1,4] + [-1,3]+[0,2]+[1,2]$ which can be depicted by 
	\begin{equation*}
		\bm =
		\begin{tikzcd}[start anchor = real center, end anchor = real center, row sep=1ex, column sep=1ex]
			{\makebox[0pt]{$-1$}} & {\makebox[0pt]{$0$}} &{\makebox[0pt]{$1$}} & {\makebox[0pt]{$2$}} & {\makebox[0pt]{$3$}} & {\makebox[0pt]{$4$}}\\
			\bullet \ar[r, dash] & \bullet \ar[r, dash] & \bullet \ar[r, dash] & \bullet \ar[r, dash] & \bullet \ar[r,dash] & \bullet \\
			\bullet \ar[r,dash] & \bullet \ar[r,dash] & \bullet \ar[r, dash] & \bullet \ar[r,dash] & \bullet\\
			& \bullet \ar[r, dash] & \bullet \ar[r,dash] & \bullet\\
			& & \bullet \ar[r,dash] & \bullet
		\end{tikzcd}.
	\end{equation*}
	Then under the basis \ref{cls:adapted-basis}, we have
	\begin{equation*}
		G_{\varphi, \ubar T}^l \cong P_{2,1,1} := \left\{
		\begin{pmatrix}
			* & * & * & *\\
			* & * & * & *\\
			&   & * & *\\
			&   &   & *
		\end{pmatrix} \in \GL_4 \right\},
		\quad
		G_{\varphi, \ubar T}^r \cong P_{1,1,2}^- := \left\{
		\begin{pmatrix}
			*\\
			* & *\\
			* & * & * & *\\
			* & * & * & *
		\end{pmatrix} \in \GL_4 \right\}.
	\end{equation*}
	Hence
	\begin{equation*}
		[G_\varphi \backslash \BO_\varphi] \cong [P_{1,1,2}^- \backslash \GL_4 / P_{2,1,1}].
	\end{equation*}
\end{example}

We now explain the connection with real groups. Assume $\varphi$ is integral and let $n = \varphi(\floor r) = \varphi(\ceil r)$. Each choice of basis for $V_{\varphi, \floor r}$ and $V_{\varphi, \ceil r}$ provides identifications of the groups $\GL(V_{\varphi, \floor r})$, $\GL(V_{\varphi, \ceil r})$ and the space $\GL(V_{\varphi, \floor r}, V_{\varphi, \ceil r})$ with $\GL_n$ so that the natural actions
\begin{equation*}
	\GL(V_{\varphi, \ceil r}) \acts \GL(V_{\varphi, \floor r}, V_{\varphi, \ceil r}) \racts \GL(V_{\varphi, \floor r})
\end{equation*}
are identified with the left and right multiplication action of $\GL_n$ on itself. The parabolic subgroups $G_{\varphi, \ubar T}^l \subset \GL(V_{\varphi, \floor r})$ and $G_{\varphi, \ubar T}^r \subset \GL(V_{\varphi, \ceil r})$ are identified with parabolic subgroups in $\GL_n$ denoted by $P_{\varphi,l}$ and $P_{\varphi,r}$. Write $\cP_{\varphi,l} = \GL_n/P_{\varphi,l}$, $\cP_{\varphi,r} = \GL_n/P_{\varphi,r}$ for the corresponding flag varieties. Then we have
\begin{equation*}
	[ G_{\varphi, \ubar T}^r \backslash \GL(V_{\varphi, \floor r}, V_{\varphi, \ceil r}) / G_{\varphi, \ubar T}^l ] 
	\cong 
	[ P_{\varphi,r} \backslash \GL_n /P_{\varphi,l} ].
\end{equation*}
Further, the stack on the right hand side can be written as
\begin{multline*}
	[ P_{\varphi,r} \backslash \GL_n /P_{\varphi,l} ]
	\cong
	[ \GL_n \backslash (\GL_n \times_{P_{\varphi,r}} (\GL_n/P_{\varphi,l}))]\\
	\cong [\Delta \GL_n \backslash (\GL_n/ P_{\varphi,r} \times \GL_n/P_{\varphi,l})]
	\cong
	[\Delta \GL_n \backslash (\cP_{\varphi,r} \times \cP_{\varphi,l})].
\end{multline*}
Here the third isomorphism is by the definitions of $\cP_{\varphi,l}$ and $\cP_{\varphi,r}$, and the first isomorphism is the induction equivalence. For the second isomorphism, consider the projection
\begin{equation*}
	p_2: \GL_n/ P_{\varphi,r} \times \GL_n/P_{\varphi,l} \surjects \GL_n/P_{\varphi,l}
\end{equation*}
which is $\GL_n$-equivariant for the diagonal action on the domain and the (transitive) left multiplication action on the codomain. By the general fact recalled in the proof of part (4) above, we have
\begin{equation*}
	\GL_n \times_{P_{\varphi,r}} (\GL_n/P_{\varphi,l}) \bijects
	\GL_n/ P_{\varphi,r} \times \GL_n/P_{\varphi,l},\quad
	(g_1, g_2 P_{\varphi,l}) \modulo P_{\varphi,r} 
	\mapsto (g_1 P_{\varphi,r}, g_1 g_2 P_{\varphi,l})
\end{equation*}
from which the second isomorphism follows.

Note that $[\Delta \GL_n \backslash (\cP_{\varphi,r} \times \cP_{\varphi,l})]$ is the ABV space for the integral infinitesimal character $\Lambda_\BR = (\lambda_L, \lambda_R)$ of $\GL_n(\BC)$ whenever $\cP_{\varphi,r} = \cP_{\lambda_L}$ and $\cP_{\varphi,l} = \cP_{\lambda_R}$. Combined with Theorem \ref{thm:symm-orbit-structure}(4), we obtain the desired open immersion from the ABV space into the Vogan variety, which we record as a corollary for future reference.

\begin{corollary}\label{cor:zeta}
	Assume $\varphi$ is integral and Assumption \ref{assump:r}. Let $n = \varphi(\lfloor r \rfloor) = \varphi(\lceil r \rceil)$. Then there exists an integral infinitesimal character $\Lambda_\BR = (\lambda_L, \lambda_R)$ of $\GL_n(\BC)$ and an open immersion
	\begin{equation}\label{eqn:zeta}
		\zeta: [\Delta \GL_n \backslash (\cP_{\lambda_L} \times \cP_{\lambda_R})] \cong [G_\varphi \backslash \BO_\varphi] \injects [G_\varphi \backslash E_\varphi].
	\end{equation}
\end{corollary}

In the non-integral case, we have a similar immersion
\begin{equation*}
	\zeta: [\Delta \GL_{n,\Lambda_\BR} \backslash (\cP_{\lambda_L} \times \cP_{\lambda_R})] \cong [G_\varphi \backslash \BO_\varphi] \injects [G_\varphi \backslash E_\varphi]
\end{equation*}
for some non-integral infinitesimal character $\Lambda_\BR$.

\begin{clause}[Consequences of $\zeta$]
	The open immersion $\zeta$ has a few formal but important consequences that are worth spelling out. For simplicity, we restrict to the case where $\varphi$ is integral. 
	\begin{enumerate}[label=(\alph*)]
		\item The map $\zeta$ induces an isomorphism of posets of orbits $\Delta \GL_n \backslash (\cP_{\lambda_L} \times \cP_{\lambda_R}) \cong G_\varphi \backslash \BO_\varphi$. In particular, we obtain an injection between conjugacy classes of Langlands parameters
		\begin{equation*}
			\Phi(\GL_n(\BC),\Lambda_\BR) \injects \Phi(\GL_m(\BQ_p),\Lambda),
		\end{equation*}
		where $\Lambda$ is the infinitesimal character that corresponds to the weight function $\varphi$.
		
		\item For each orbit $Q \in \Delta \GL_n \backslash (\cP_{\lambda_L} \times \cP_{\lambda_R})$ write $Q' \in G_\varphi \backslash \BO_\varphi$ for the corresponding orbit, and write $\phi$, $\phi'$ for the L-parameters. Then there is a bijection between simple $\GL_n$-equivariant local systems on $Q$ and simple $G_\varphi$-equivariant local systems on $Q'$. In particular, we obtain a bijection of L-packets
		\begin{equation*}
			\Pi_\phi(\GL_n(\BC)) \bijects \Pi_{\phi'}(\GL_m(\BQ_p))
		\end{equation*}		
		and an injection between conjugacy classes of complete Langlands parameters
		\begin{equation*}
			\Xi(\GL_n(\BC),\Lambda_\BR) \injects \Xi(\GL_m(\BQ_p),\Lambda),
		\end{equation*}
		extending the injection in part (a) above. Indeed, the isomorphism $[\Delta \GL_n \backslash (\cP_{\lambda_L} \times \cP_{\lambda_R})] \cong [G_\varphi \backslash \BO_\varphi]$ implies an isomorphism between component groups
		\begin{equation*}
			A_{\GL_n}(x) \cong A_{G_\varphi}(x')
		\end{equation*}
		whose irreducible representations parameterize the set of simple local systems on the respective orbits.
		
		In the current setting of general linear groups, (b) is rather trivial, since each orbit supports only one local system on both real and $p$-adic side. However, in the sequel to this paper, we will construct a similar open immersion for real unitary groups and $p$-adic symplectic or orthogonal groups, where (b) becomes meaningful. 
		
		\item Given a complete Langlands parameter $\xi \in \Xi(\GL_n(\BC),\Lambda_\BR)$, write $\xi'$ for the corresponding complete parameter on the $p$-adic side. Since $\zeta$ is an open immersion, a certain shift of the pullback functor $\zeta^*$
		sends the standard sheaf $\cM(\xi')$ and the simple perverse sheaf $\cL(\xi')$ to $\cM(\xi)$ and $\cL(\xi)$, respectively, and the pushforward $\zeta_!$, up to a shift, sends $\cM(\xi)$ to $\cM(\xi')$. If $\xi'' \in \Xi(\GL_m(\BQ_p),\Lambda)$ is a parameter not in the image of the injection of part (b), then $\zeta^*$ sends $\cM(\xi')$ and $\cL(\xi')$ to zero. As a consequence, the adjoint of $\zeta^*$ under the pairing (\ref{eqn:LLC-GLn-pairing}) sends standard modules to standard ones and irreducible modules to irreducible ones up to some signs.
		
		\item For complete parameters $\xi_1$ and $\xi_2$, the Kazhdan-Lusztig polynomials $P_{\xi_1,\xi_2}$ and $P_{\xi_1',\xi_2'}$ are equal. This is because the base change along $\zeta$ of the inclusion map $i_2: Q_2 \inj \cP_{\lambda_L} \times \cP_{\lambda_R}$ is the inclusion map $i_2': Q_2' \inj \BO_\varphi$, and vice versa. Therefore the pullback $i_2^*$ (resp. $i_2'{}^*$), which computes $P_{\xi_1,\xi_2}$ (resp. $P_{\xi_1',\xi_2'}$), can be computed using $i_2'{}^*$ (resp. $i_2^*$) instead.
		
		\item Let $mic_{Q_2}(\xi_1)$ denote the \textit{microlocal multiplicity} of $\xi_1$ at $Q_2$, i.e. the multiplicity of the conormal bundle to $Q_2$ in the characteristic cycle of the simple perverse sheaf $\cL(\xi_1)$ attached to $\xi_1$ (see \cite[Theorem 1.31]{ABV}). Then $mic_{Q_2}(\xi_1) = mic_{Q_2'}(\xi_1')$. This follows from \cite[Proposition 20.2]{ABV}. In particular, the ABV-packets $\Pi_{Q_2}^{ABV}$ and $\Pi_{Q_2'}^{ABV}$ match, where
		\begin{equation*}
			\Pi_{Q_2}^{ABV} = \{\xi_1 \in \Xi(\GL_n(\BC),\Lambda_\BR) \mid mic_{Q_2}(\xi_1) \neq 0\}
		\end{equation*}
		and similarly for $\Pi_{Q_2}^{ABV}$.
	\end{enumerate}
	
	We mention the inspiring work of Barchini-Trapa \cite{Barchini-Trapa}, in which the authors construct a map from the Vogan variety to the ABV space for the same group (in opposite direction than ours). It enjoys the properties (a), (d) and (e), but property (b) fails for them in general. 
	
	Finally, we remark that the geometric comparison theorem above is quite flexible, in the sense that for a fixed choice of $n$ and $\Lambda_\BR$, the isomorphism $[\Delta \GL_n \backslash (\cP_{\lambda_L} \times \cP_{\lambda_R})] \cong [G_\varphi \backslash \BO_\varphi]$ holds for infinitely many $\varphi$'s. Indeed, the isomorphism relies only on the increments of values of $\varphi$, and there are different $\varphi$'s with the same increments. For example, the ABV space $[P_{1,1,2}^- \backslash \GL_4 / P_{2,1,1}]$ in Example \ref{eg:geom-comparison} is also isomorphic to $[G_{\varphi'} \backslash \BO_{\varphi'}]$ where $\varphi'$ is the weight function of the multisegment
	\begin{equation*}
		\bm' =
		\begin{tikzcd}[start anchor = real center, end anchor = real center, row sep=1ex, column sep=1ex]
			{\makebox[0pt]{$-2$}} & {\makebox[0pt]{$-1$}} & {\makebox[0pt]{$0$}} &{\makebox[0pt]{$1$}} & {\makebox[0pt]{$2$}} & {\makebox[0pt]{$3$}} & {\makebox[0pt]{$4$}} & {\makebox[0pt]{$5$}} 
			\\
			\bullet \ar[r, dash] & \bullet \ar[r, dash] & \bullet \ar[r, dash] & \bullet \ar[r, dash] & \bullet \ar[r, dash] & \bullet \ar[r,dash] & \bullet \ar[r, dash] & \bullet
			\\
			\bullet \ar[r, dash] & \bullet \ar[r,dash] & \bullet \ar[r,dash] & \bullet \ar[r, dash] & \bullet \ar[r,dash] & \bullet \ar[r,dash] & \bullet
			\\
			&& \bullet \ar[r, dash] & \bullet \ar[r,dash] & \bullet
			\\
			&& & \bullet \ar[r,dash] & \bullet
		\end{tikzcd}.
	\end{equation*}
\end{clause}

\subsection{Algebraic comparison} \label{subsec:alg-comparison}

In this subsection, we review the construction of Chan-Wong \cite{Chan-Wong:GLn}. Write 
\begin{equation*}
	\Rep(\GL_n(\BC)) = 
	\text{category of Harish-Chandra modules of the real group } \GL_n(\BC),
\end{equation*}
\begin{equation*}
	\Rep^I (\GL_m(\BQ_p)) = 
	\begin{array}{l}
		\text{category of finite length smooth representations of $\GL_m(\BQ_p)$}\\
		\text{whose composition factors admit nonzero Iwahori fixed vectors}
	\end{array}	
\end{equation*}
It is well-known (see \cite{Borel:Iwahori} and \cite{Lusztig:grHecke}) that $\Rep^I(\GL_m(\BQ_p))$ is equivalent to the category $\mod(\BH_m)$ of finite dimensional $\BH_m$-modules, where $\BH_m$ is the graded Hecke algebra of type $A_{m-1}$. For the definition of $\BH_m$, we refer the readers to \cite[\textsection 4]{Lusztig:grHecke} and \cite[\textsection 2.3]{Chan-Wong:GLn}. 

Let $\triv$ denote the trivial representation of $\GL_n(\BC)$, and let $V$ denote the conjugate standard representation of $\GL_n(\BC)$, i.e. $V = \BC^n$ with $g \in \GL_n(\BC)$ acting by the matrix $\bar g$. In the notations of \ref{cls:RT-notations-GLn}, $V = \bar X(\rho, \rho + e_1)$, where $e_1: \fh \cong \BC^n \to \BC$ takes the first coordinate. Then there exists an $\BH_m$-module structure on $\Hom_{\bU(n)}(\triv, M \dotimes_\BC V^{\otimes m})$. We may define the functor
\begin{equation*}
	\Gamma_{n,m}: \Rep(\GL_n(\BC)) \aro \mod(\BH_m),\quad
	M \mapsto \Hom_{\bU(n)}(\triv, M \dotimes_\BC V^{\otimes m}),
\end{equation*}
The functor $\Gamma_{n,m}$ relates certain translation functors on $\Rep(\GL_n(\BC))$ and certain derivatives on $\mod(\BH_m)$. To be more precise, for each $i \le m$ consider the subgroup $S_i \subset S_m$ of permutations on the first $i$ letters. It induces an inclusion $\BC[S_i] \inj \BC[S_m] \subset \BH_m$. For any irreducible representation $\tau$ of $S_i$, the \textit{generalized Bernstein-Zelevinskii derivative} is defined to be
\begin{equation*}
	\mathbf{BZ}_\tau: \mod(\BH_m) \aro \mod(\BH_{m-i}), \quad
	\pi \mapsto \Hom_{S_i}(\tau, \pi).
\end{equation*}
When $\tau$ is the sign representation, the operator $\mathbf{BZ}_\tau$ agrees with Bernstein-Zelevinskii's original derivatives \cite{Bernstein-Zelevinskii} up to some normalizations, see \cite{Chan-Savin:BZ}. On the real side, consider 
\begin{equation*}
	T_\tau: \Rep(\GL_n(\BC)) \aro \Rep(\GL_n(\BC)),\quad
	\pi \mapsto \pi \otimes \Hom_{S_i}(\tau, V^{\otimes i})
\end{equation*}
where $S_i$ acts on $V^{\otimes i}$ by permutating the factors, twisted by the sign representation.

\begin{theorem}[{\cite[Theorem 1.1, 1.2]{Chan-Wong:GLn}}]
	The functor $\Gamma_{n,m}$ satisfies the following properties.
	\begin{itemize}
		\item $\Gamma_{n,m}$ is exact. It commutes with parabolic inductions and preserves unitarity.
		\item $\Gamma_{n,m}$ sends standard (resp. simple) modules to standard (resp. simple) modules.
		\item For each $i$ and each irreducible representation $\tau$ of $S_i$, $\Gamma_{n,m}$ intertwines $\mathbf{BZ}_\tau$ and $T_\tau$, that is
		\begin{equation*}
			\begin{tikzcd}
				\mod(\BH_m) \ar[d, "\mathbf{BZ}_\tau"']
				& \Rep(\GL_n(\BC)) \ar[l, "\Gamma_{n,m}"'] \ar[d, "T_\tau"]
				\\
				\mod(\BH_{m-i})
				& \Rep(\GL_n(\BC)) \ar[l, "\Gamma_{n,m-i}"']
			\end{tikzcd}
		\end{equation*}
	\end{itemize}
\end{theorem}

We may compose $\Gamma_{n,m}$ with the equivalence between $\BH_m$-modules and $\Rep^I(\GL_m(\BQ_p))$. By abuse of notation, we denote the resulting functor also by $\Gamma_{n,m}$:
\begin{equation*}
	\Gamma_{n,m}: \Rep(\GL_n(\BC)) \aro \Rep^I(\GL_m(\BQ_p)).
\end{equation*}
The image of standard representations are described in \textit{loc. cit.}, Theorem 6.4:

\begin{theorem}\label{thm:CW:std}
	Fix positive integers $n$ and $m$. Let $\fh \subset \fgl_n(\BC)$ be the diagonal Cartan. Let $\Lambda_\BR = (\lambda_L, \lambda_R) \in \fh^* \times \fh^*$ be an infinitesimal character satisfying 
	\begin{itemize}
		\item $\lambda_{L,i} \ge \lambda_{R,i}$ for each $i$, and
		\item $m = \sum_i (\lambda_{L,i} - \lambda_{R,i})$.
	\end{itemize} 
	Let $\bm = \sum_i \big[ \lambda_{R,i} + \tfrac12, \lambda_{L,i} - \tfrac12 \big]$ be a multisegment. Then
	\begin{equation*}
		\Gamma_{n,m}(X(\lambda_L, \lambda_R)) = X_\bm.
	\end{equation*}
\end{theorem}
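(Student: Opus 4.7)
The strategy is to reduce to the case of a character of $\GL_1(\BC)$ via the compatibility of $\Gamma_{n,m}$ with parabolic induction (which is one of the Chan-Wong properties listed just above the statement), and then identify $\Gamma_{1,k}$ of such a character with a generalized Steinberg.

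\emph{Step 1: reduction to characters.} By definition, $X(\lambda_L,\lambda_R)$ is the $\check K$-finite part of the normalized induction of $\chi_{(\lambda_L,\lambda_R)}$ from the upper-triangular Borel, i.e.\ an iterated normalized induction of the $\GL_1(\BC)$-characters $\chi_i(z)=z^{\lambda_{L,i}}\bar z^{\lambda_{R,i}}$. Applying the cited parabolic-induction compatibility of $\Gamma_{n,m}$ (note that $\bU(1)$-invariance in the definition of $\Gamma_{1,k}(\chi_i)$ pins down $k=\lambda_{L,i}-\lambda_{R,i}$, so only one composition of $m$ contributes), I get
\begin{equation*}
	\Gamma_{n,m}\big(X(\lambda_L,\lambda_R)\big)=\nInd_{\prod_i \GL_{m_i}(\BQ_p)}^{\GL_m(\BQ_p)}\big(\Gamma_{1,m_1}(\chi_1)\boxtimes\cdots\boxtimes\Gamma_{1,m_n}(\chi_n)\big),
\end{equation*}
where $m_i=\lambda_{L,i}-\lambda_{R,i}$ and $\sum_i m_i=m$ by hypothesis.

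\emph{Step 2: the $\GL_1$ computation.} For $\chi(z)=z^a\bar z^b$ with $k:=a-b\ge 0$, the conjugate standard representation $V=\bar X(\rho,\rho+e_1)$ restricts to $\GL_1(\BC)$ as $z\mapsto\bar z$, so $\chi\otimes V^{\otimes k}=z^a\bar z^{b+k}$ becomes $\bU(1)$-invariant precisely because $k=a-b$. The Hom space $\Hom_{\bU(1)}(\triv,\chi\otimes V^{\otimes k})$ is therefore one-dimensional. I would then check that its $\BH_k$-module structure matches that of the generalized Steinberg $\St_{[b+\frac12,a-\frac12]}$, whose Iwahori-fixed vector is likewise one-dimensional. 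Both are characters of $\BH_k$, so it suffices to compute the eigenvalues of the polynomial generators $x_1,\ldots,x_k$. Tracing through the Arakawa-Suzuki-type construction of the $\BH_k$-action used by Chan-Wong, the generator $x_j$ acts on the unique tensorand slot by a scalar involving $b$ plus a shift coming from $\delta^{1/2}$ in the normalized induction, and I expect this to yield $x_j\mapsto b+j-\tfrac12$. By Borel-Casselman, these are exactly the eigenvalues on the Iwahori-fixed line of $\St_{[b+\frac12,a-\frac12]}$, and so the two characters coincide.

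\emph{Step 3: assembly.} Substituting Step 2 into Step 1 and comparing with the definition of $X_\bm$ as a normalized induction of a box product of Steinbergs on the segments $[\lambda_{R,i}+\tfrac12,\lambda_{L,i}-\tfrac12]$ gives the conclusion. The ordering requirement $\operatorname{Re}(a_j+b_j)\ge \operatorname{Re}(a_{j+1}+b_{j+1})$ in the definition of $X_\bm$ can be arranged by permuting the coordinates of $\Lambda_\BR$, which does not change $X(\lambda_L,\lambda_R)$ up to the usual intertwining.

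\emph{Main obstacle.} Step 2 is where the real work lives: the exact computation of the $\BH_k$-eigenvalues from the one-dimensional Hom space. The half-integer shifts must be tracked precisely through the modular characters appearing in the normalized inductions on both sides so that one lands on the Steinberg quotient rather than on a different irreducible constituent of the unramified principal series of $\GL_k(\BQ_p)$. A clean execution pins down the $k=1$ case from the infinitesimal character alone, and then bootstraps to general $k$ by combining exactness, parabolic-induction compatibility inside the Hecke algebra, and uniqueness of the Steinberg as the simple quotient of the fully induced principal series on a segment.
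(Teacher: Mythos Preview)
The paper does not prove this theorem. It is quoted verbatim as a result from Chan--Wong \cite{Chan-Wong:GLn}, Theorem 6.4 (see the sentence immediately preceding the statement: ``The image of standard representations are described in \textit{loc.\ cit.}, Theorem 6.4''). So there is no proof in the paper to compare your proposal against.

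That said, your outline is the natural strategy and is essentially how Chan--Wong proceed: use the parabolic-induction compatibility of $\Gamma_{n,m}$ to reduce to the rank-one case, then identify $\Gamma_{1,k}$ of a character with a Steinberg by computing the $\BH_k$-action on the one-dimensional $\bU(1)$-invariants. Your Step~2 correctly isolates the genuine content (tracking the half-integer shifts so that the polynomial generators act by the segment $[b+\tfrac12,a-\tfrac12]$); a full execution requires the explicit formulas for the $\BH_k$-action in the Arakawa--Suzuki/Chan--Wong construction, which you have not reproduced. Your Step~3 remark about reordering is fine: simultaneously permuting both $\lambda_L$ and $\lambda_R$ by the same $w$ gives an isomorphic principal series $X(w\lambda_L,w\lambda_R)\cong X(\lambda_L,\lambda_R)$, so the ordering required in the definition of $X_\bm$ can indeed be arranged at no cost.
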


\subsection{Compatibility of two approaches}\label{subsec:alg-vs-geom}

Let $\Lambda_\BR= (\lambda_L, \lambda_R) \in \fh^* \oplus \fh^*$ be a dominant infinitesimal character for $\GL_n(\BC)$ ($\fh$ denotes the diagonal Cartan in $\fgl_n(\BC)$). In particular $\lambda_{L,i} - \lambda_{R,i}$ is an integer for any $i$. Here dominance translates to the condition that whenever $i < j$ and $\lambda_{L,i} - \lambda_{L,j}$ is an integer, we have $\lambda_{L,i} \ge \lambda_{L,j}$. 

If $\Lambda_\BR$ is integral, we assume 
\begin{equation}\label{eqn:LambdaR-apart}
	\min_i \{ \lambda_{L,i}\} > \max_i \{ \lambda_{R,i}\}+1
\end{equation}
(see \textsection\ref{subsec:gen-notn} for the meaning of $\min$ and $\max$). If $\Lambda_\BR$ is non-integral, then we require
\begin{equation}\label{eqn:LambdaR-apart-nint}
	\min_{i \in \sS} \{ \lambda_{L,i}\} > \max_{i \in \sS} \{ \lambda_{R,i}\} +1
\end{equation}
for any subset $\sS \subset \{1,\ldots,n\}$ so that $\lambda_{L,i} - \lambda_{L,j} \in \BZ$ whenever $i,j \in \sS$. Note that any $\Lambda_\BR$ can be transformed into one of this form by adding a twist by the determinant character.

Write
\begin{align}
	m &:= \sum_i (\lambda_{L,i} - \lambda_{R,i}) \label{eqn:m-from-LambdaR}\\ 
	\bm &:= \sum_i \big[ \lambda_{R,i} + \tfrac12, \lambda_{L,i} - \tfrac12 \big]\\
	\varphi &:= \text{the weight function of } \bm
	\; (\text{see \ref{cls:p-adic-closure-relations}})\\
	&\quad \text{i.e. } \varphi(a) = \text{the number of times $a$ appears in $\bm$}, \nonumber\\
	\Lambda &:= \text{infinitesimal character of $\GL_m(\BQ_p)$ corresponding to $\varphi$} 
	\; (\text{see \ref{cls:ELambda-as-Evarphi}}), \label{eqn:Lambda-from-LambdaR}\\ 
	\ubar T &:= \text{any element in } \ubar \BO_\varphi. \label{eqn:ubarT-from-LambdaR}
\end{align}
Then $\Lambda$ is the infinitesimal character of $X_\bm$, and $O_\bm \subset \BO_\varphi$ is the smallest (i.e. closed) $G_\varphi$-orbit in $\BO_\varphi$.

\begin{clause}\label{cls:cond-for-both-comparisons}
	If $\Lambda_\BR$ is integral (i.e. the differences $\lambda_{L,i} - \lambda_{L,j}$ are all integers, for any $i$ and $j$), then by our assumption, we have
	\begin{equation}\label{eqn:cond-on-LambdaR}
		\lambda_{R,n} \le \lambda_{R,n-1} \le \cdots \le \lambda_{R,1} < \lambda_{R,1}+1 < \lambda_{L,n} \le \cdots \le \lambda_{L,2} \le \lambda_{L,1}.
	\end{equation}
	The weight function $\varphi$ is also integral (in the sense of \ref{cls:integral-pieces}). Observe the following facts:
	\begin{enumerate}[label=(\roman*)]
		\item $(w \lambda_L, \lambda_R)$ (for $w \in W(\GL_n)$) satisfies the condition of Theorem \ref{thm:CW:std}. 
		
		Indeed, we have $\lambda_{L,i} \ge \lambda_{R,i}$ for each $i$ by (\ref{eqn:cond-on-LambdaR}), and the condition on $m$ of Theorem \ref{thm:CW:std} is satisfied by our construction of $m$.
		
		\item $\varphi$ satisfies Assumption \ref{assump:r}, where one may choose $r$ to a suitable number $r \in \supp \varphi + \BQ$ so that $\lambda_{L,n} > r > \lambda_{R,1}$. 
		
		Indeed, the open orbit $O_{\bm'}$ in $E_\varphi$ corresponds to the multisegment
		\begin{equation*}
			\bm' 
			= [\lambda_{R,1} + \tfrac12, \lambda_{L,n} - \tfrac12] 
			+ [\lambda_{R,2} + \tfrac12, \lambda_{L,n-1} - \tfrac12] + \cdots
			+ [\lambda_{R,n} + \tfrac12, \lambda_{L,1} - \tfrac12]
		\end{equation*}
		which is nested and the shortest segment $[\lambda_{R,1} + \tfrac12, \lambda_{L,n} - \tfrac12]$ has length $\ge 1$ (see the discussion after Assumption \ref{assump:r}).
	\end{enumerate}
	
	Hence the conditions for the geometric and the algebraic comparison are both satisfied. 
	
	We now want to write down the precise relation between real and $p$-adic parameter spaces in this specific setup. 
	
	\begin{lemma}\label{lem:compare-G/Ps}
		Assume we are in the above setup. Let $\tau$ be the involution on $\fh$ given by $h \mapsto -w_0 h$. Recall the partial flag varieties $\cP_{\lambda_L}$, $\cP_{\lambda_R}$ defined in \ref{cls:real-geom-setup}. Let $\cP_{\varphi, \floor r}$ (resp. $\cP_{\varphi,\ceil r}$) be the partial flag variety $\GL(V_{\varphi, \floor r})/ G_{\varphi, \ubar T}^l$ (resp. $\GL(V_{\varphi, \ceil r})/ G_{\varphi, \ubar T}^r$).
		
		Choose an ordered basis for $V_{\varphi, \floor r}$ and $V_{\varphi, \ceil r}$ and use it to build isomorphisms between $\GL(V_{\varphi,\floor r})$, $\GL(V_{\varphi, \ceil r})$ and $\GL(V_{\varphi,\floor r}, V_{\varphi, \ceil r})$ with $\GL_n$ so that the following diagram commutes
		\begin{equation*}
			\begin{tikzcd}
				\GL(V_{\varphi, \ceil r}) \ar[d, "\cong"] \ar[r, phantom, "\acts" description]
				& \GL(V_{\varphi, \floor r}, V_{\varphi, \ceil r}) \ar[d, "\cong"]
				& \GL(V_{\varphi, \floor r}) \ar[d, "\cong"] \ar[l, phantom, "\racts" description]\\
				\GL_n \ar[r, phantom, "\acts_l" description]
				& \GL_n
				& \GL_n \ar[l, phantom, "\racts_r" description]
			\end{tikzcd}.
		\end{equation*}
		where $\acts_l$ and $\racts_r$ denote the left and right multiplication actions. Then the two outer vertical maps induce isomorphisms
		\begin{equation*}
			\cP_{\varphi,\floor r} \bijects \cP_{\tau(\lambda_R)},\quad
			\cP_{\varphi,\ceil r} \bijects \cP_{\tau(\lambda_L)}.
		\end{equation*}
		Consequently, we have an isomorphism
		\begin{equation*}
			\big[ \Delta \GL_n \backslash (\cP_{\tau(\lambda_L)} \times \cP_{\tau(\lambda_R)}) \big]
			\cong
			\big[ G_{\varphi, \ubar T}^r \backslash \GL(V_{\varphi, \floor r}, V_{\varphi, \ceil r}) / G_{\varphi, \ubar T}^l \big].
		\end{equation*}
		Combined with Theorem \ref{thm:symm-orbit-structure}(4), we obtain
		\begin{equation}\label{eqn:quotients-stacks-by-basis}
			\big[ \Delta \GL_n \backslash (\cP_{\tau(\lambda_L)} \times \cP_{\tau(\lambda_R)}) \big]
			\cong
			[G_\varphi \backslash \BO_\varphi] \injects [G_\varphi \backslash E_\varphi].
		\end{equation}
	\end{lemma}
	
	\begin{proof}
		The isomorphisms of partial flag varieties follows by unraveling the definitions. We write down details for $\cP_{\varphi,\floor r} \bij \cP_{\tau(\lambda_R)}$; the other isomorphism $\cP_{\varphi,\ceil r} \bij \cP_{\tau(\lambda_L)}$ is similar. An example is provided at the end of the proof.
		
		Let $1 \le j_1, j_2,\ldots, j_k \le n$ be integers such that as $j$ increases, the value of $\lambda_{R,j}$ jumps precisely at the $j_i+1$'s, i.e.
		\begin{itemize}
			\item $\lambda_{R, 1} = \lambda_{R,2} = \cdots = \lambda_{R,j_1}$,
			\item $\lambda_{R, j_1} > \lambda_{R, j_1 +1}$,
			\item $\lambda_{R, j_1 +1} = \lambda_{R, j_1 + 2} = \cdots = \lambda_{R,j_2}$,
			\item $\lambda_{R, j_2} > \lambda_{R, j_2+1}$
		\end{itemize}
		and so on. By definition, $\cP_{\lambda_R}$ contains the upper-triangular parabolic $P_{\lambda_R}$ which consists of block upper-triangular matrices whose diagonal blocks are (read from top-left to bottom-right)
		\begin{equation*}
			\GL_{j_1} \times \GL_{j_2-j_1} \times \GL_{j_3-j_2} \times \cdots \times \GL_{n-j_k}.
		\end{equation*}
		On the other hand, if $i_0$ is the smallest number in the support of $\varphi$, then $G_{\varphi, \ubar T}^l$ also consists of block upper-triangular matrices whose diagonal blocks are
		\begin{equation*}
			\GL_{\varphi(i_0)} \times \GL_{\varphi(i_0+1) - \varphi(i_0)} \times \GL_{\varphi(i_0+2) - \varphi(i_0+1)} \times \cdots \times \GL_{\varphi(\floor r) - \varphi(\floor r -1)},
		\end{equation*}
		see \ref{thm:symm-orbit-structure}(3). Some of the blocks here are redundant since we could have $\varphi(i_0+j+1) - \varphi(i_0+j) = 0$ which happens when the value $\varphi(i)$ does not change as $i$ increases. This value jumps at $i$ (i.e. $\varphi(i) > \varphi(i-1)$) precisely when there is a segment in $\bm$ that starts at $i$. The smallest such $i$ is $i_0$, which equals $\lambda_{R,n} +\frac12 = \lambda_{R,n-1} + \frac12 = \cdots = \lambda_{R,j_k+1} + \frac12$, since $\lambda_{R,n}$ is the smallest number among the $\lambda_{R,i}$'s. In this case $\varphi(i_0) = n-j_k$. The next such $i$ is $\lambda_{R,j_k} + \frac12 = \cdots = \lambda_{R,j_{k-1}+1} + \frac12$, in which case $\varphi(i) = n - j_{k-1}$. The pattern continues. Hence the diagonal blocks in $G_{\varphi, \ubar T}^l$ are
		\begin{equation*}
			\GL_{n-j_k} \times \GL_{j_k-j_{k-1}} \times \GL_{j_{k-1}-j_{k-2}} \times \cdots \times \GL_{j_2 - j_1} \times \GL_{j_1}.
		\end{equation*}
		These are the same diagonal blocks as $P_{\lambda_R}$ but in reverse order. It is now clear that $G_{\varphi, \ubar T}^l$ is equal to the upper-triangular parabolic subgroup $P_{\tau(\lambda_R)}$ determined by $\tau(\lambda_R) = (-\lambda_{R,n}, \ldots, -\lambda_{R,1})$.
		
		For example, consider $\lambda_R = (3,3,3,2,1,0,0,0,0)$. Then $P_{\lambda_R}$ is the upper-triangular parabolic whose Levi is $\GL_3 \times \GL_1 \times \GL_1 \times \GL_4$. The left half of the multisegment $\bm$ is
		\begin{equation*}
			\begin{tikzcd}[start anchor = real center, end anchor = real center, row sep=0.3ex, column sep=1ex]
				{\makebox[0pt]{$\tfrac12$}} & {\makebox[0pt]{$\tfrac32$}} &{\makebox[0pt]{$\tfrac52$}} & {\makebox[0pt]{$\tfrac72$}} & {\makebox[0pt]{$\tfrac92$}} \\
				\bullet \ar[r, dash] & \bullet \ar[r, dash] & \bullet \ar[r, dash] & \bullet \ar[r, dash] & \bullet \ar[r,dash, end anchor=west] & \cdots 
				\\
				\bullet \ar[r, dash] & \bullet \ar[r, dash] & \bullet \ar[r, dash] & \bullet \ar[r, dash] & \bullet \ar[r,dash, end anchor=west] & \cdots 
				\\
				\bullet \ar[r, dash] & \bullet \ar[r, dash] & \bullet \ar[r, dash] & \bullet \ar[r, dash] & \bullet \ar[r,dash, end anchor=west] & \cdots 
				\\
				\bullet \ar[r, dash] & \bullet \ar[r, dash] & \bullet \ar[r, dash] & \bullet \ar[r, dash] & \bullet \ar[r,dash, end anchor=west] & \cdots 
				\\
				& \bullet \ar[r, dash] & \bullet \ar[r, dash] & \bullet \ar[r, dash] & \bullet \ar[r,dash, end anchor=west] & \cdots 
				\\
				&& \bullet \ar[r, dash] & \bullet \ar[r, dash] & \bullet \ar[r,dash, end anchor=west] & \cdots 
				\\
				&&& \bullet \ar[r, dash] & \bullet \ar[r,dash, end anchor=west] & \cdots 
				\\
				&&& \bullet \ar[r, dash] & \bullet \ar[r,dash, end anchor=west] & \cdots
				\\
				&&& \bullet \ar[r, dash] & \bullet \ar[r,dash, end anchor=west] & \cdots
			\end{tikzcd}
		\end{equation*}	
		and so $G_{\varphi, \ubar T}^l$ is the upper-triangular parabolic whose Levi is $\GL_4 \times \GL_1 \times \GL_1 \times \GL_3$, which equals the standard parabolic $P_{\tau(\lambda_R)}$ of $\tau(\lambda_R) = (0,0,0,0,-1,-2,-3,-3,-3)$.	
	\end{proof}
\end{clause}

We may now state the relation between geometric and algebraic comparisons. Continue to work under the setup of (\ref{eqn:LambdaR-apart})-(\ref{eqn:ubarT-from-LambdaR}) and assume $\Lambda_\BR$ is integral. Consider the exact functor $\Gamma_{n,m}$ restricted to the full subcategory of modules with infinitesimal character $\Lambda_\BR$. It descends to a homomorphism on the level of Grothendieck groups
\begin{equation*}
	\Gamma_{n,m}: K \Rep(\GL_n(\BC))_{\Lambda_\BR} \injects K \Rep(\GL_m(\BQ_p))_\Lambda
\end{equation*}
which is injective by Theorem \ref{thm:CW:std} (the conditions of \ref{thm:CW:std} are satisfied by \ref{cls:cond-for-both-comparisons}(i)). On the dual side, 
we precompose (\ref{eqn:quotients-stacks-by-basis}) with the map $\tau \times \tau$ sending $\cP_{\lambda_L}$ (resp. $\cP_{\lambda_R}$) to $\cP_{\tau(\lambda_L)}$ (resp. $\cP_{\tau(\lambda_R)}$):
\begin{equation}\label{eqn:iota}
	\imath : [ \Delta \GL_n \backslash (\cP_{\lambda_L} \times \cP_{\lambda_R})]
	\xrightarrow[\sim]{\tau \times \tau} [ \Delta \GL_n \backslash (\cP_{\tau(\lambda_L)} \times \cP_{\tau(\lambda_R)})]
	\xhookrightarrow{(\ref{eqn:quotients-stacks-by-basis})}
	[G_\varphi \backslash E_\varphi].
\end{equation}
The exact pullback functor
\begin{equation*}
	\imath^*: D^b(E_\varphi, G_\varphi) \aro D^b(\cP_{\lambda_L} \times \cP_{\lambda_R}, \Delta \GL_n)
\end{equation*}
sends standard modules to standard modules or zero, and descends to a surjective homomorphism on Grothendieck groups.

\begin{theorem}\label{thm:alg-vs-geom}
	Suppose $\Lambda_\BR = (\lambda_L, \lambda_R)$ is a dominant integral infinitesimal character for $\GL_n(\BC)$ satisfying
	\begin{equation}
		\min_i \{ \lambda_{L,i}\} > \max_i \{\lambda_{R,i}\} +1. \tag{\ref{eqn:LambdaR-apart}} 
	\end{equation}
	Define $m$, $\varphi$ and $\Lambda$ as in (\ref{eqn:m-from-LambdaR})-(\ref{eqn:Lambda-from-LambdaR}). Choose an ordered basis for $V_{\varphi,\floor r}$ and $V_{\varphi, \ceil r}$ and use it to construct the map $\imath$ as in (\ref{eqn:iota}). Then with respect to the perfect pairings 
	\begin{equation}
		K \Rep (\GL_n(\BC))_{\Lambda_\BR} \times K D^b(\cP_{\lambda_L} \times \cP_{\lambda_R}, \Delta \GL_n) \aro \BZ, \tag{\ref{eqn:LLC-GLn-pairing}}
	\end{equation}
	\begin{equation}
		K \Rep(\GL_m(\BQ_p))_\Lambda \times K D^b(E_\varphi, G_\varphi) \aro \BZ, \tag{Proposition \ref{prop:LLC-GLmQp}}
	\end{equation}
	the functors $\Gamma_{n,m}$ and $\imath^*$ are adjoint to each other, i.e.
	\begin{equation*}
		\langle \Gamma_{n,m} M, \cF\rangle =  \langle M,  \imath^* \cF \rangle
	\end{equation*}
	for any $M \in K \Rep(\GL_n(\BC))_{\Lambda_\BR}$ and any $\cF \in K D^b(E_\varphi, G_\varphi)$.
\end{theorem}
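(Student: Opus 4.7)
The plan is to verify the asserted adjunction on a pair of bases. On the real side, take the standard modules $M = X(\lambda_L, w\lambda_R)$ as $w$ ranges over $W_{\Lambda_\BR}$ modulo the stabilizer of $\lambda_L$; on the p-adic side, take the standard sheaves $\cF = \cM_\fn$ indexed by multisegments $\fn$ of weight $\varphi$. Both form bases of the respective Grothendieck groups.

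For the left-hand side, assumption (\ref{eqn:LambdaR-apart}) implies $\lambda_{L,i} > (w\lambda_R)_j$ for all $i,j$, so Theorem~\ref{thm:CW:std} applies and gives $\Gamma_{n,m}(X(\lambda_L, w\lambda_R)) = X_{\fn_w}$ with
\[
\fn_w := \sum_i \bigl[(w\lambda_R)_i + \tfrac12,\; \lambda_{L,i} - \tfrac12 \bigr].
\]
Pairing with $\cM_\fn$ via Proposition~\ref{prop:LLC-GLmQp} yields $\langle \Gamma_{n,m} M, \cM_\fn\rangle = (-1)^{\dim O_{\fn_w}}\delta_{\fn,\fn_w}$.

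For the right-hand side, we analyse $\imath^*\cM_\fn$. Because $\imath$ factors through the open immersion $[G_\varphi \backslash \BO_\varphi] \hookrightarrow [G_\varphi \backslash E_\varphi]$ and $\BO_\varphi$ is a union of $G_\varphi$-orbits, $\imath^* \cM_\fn = 0$ unless $O_\fn \subset \BO_\varphi$. In the latter case, under the stack isomorphism of Lemma~\ref{lem:compare-G/Ps} (composed with $\tau \times \tau$), the orbit $O_\fn$ corresponds to a $\Delta \GL_n$-orbit $Q_\fn \subset \cP_{\lambda_L} \times \cP_{\lambda_R}$, namely the image of some $Z_{v_\fn}$. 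On Grothendieck classes, $\imath^* \cM_\fn$ equals the class of the standard sheaf supported on $Q_\fn$ up to a sign governed by the dimension mismatch $\dim O_\fn - \dim Q_\fn = \dim E_\varphi - \dim(\cP_{\lambda_L}\times\cP_{\lambda_R})$ together with the different $!$-vs-$*$ extension conventions used in defining $\cM_\fn$ versus $\cM_{v_\fn}$; the factor $\epsilon$ is built precisely to absorb this sign. Proposition~\ref{prop:LLC-GLnC} then gives
\[
\langle X(\lambda_L, w\lambda_R),\; \epsilon\,\imath^* \cM_\fn \rangle = (-1)^{\ell(w)}\,\delta_{w^{-1},\, v_\fn}.
\]

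The theorem thus reduces to the combinatorial identification $\fn_w = \fn \Longleftrightarrow v_\fn = w^{-1}$, which is the main obstacle. The idea is to unwind the proof of Lemma~\ref{lem:compare-G/Ps}: a generic $T \in O_{\fn_w}$ is determined by $\ubar T$ together with an isomorphism $V_{\varphi,\floor r}\xrightarrow{\sim}V_{\varphi,\ceil r}$, and the $i$-th segment $[(w\lambda_R)_i + \tfrac12,\,\lambda_{L,i} - \tfrac12]$ of $\fn_w$ records that this isomorphism matches the $i$-th subquotient of the flag defining $P_r \subset \GL(V_{\varphi,\ceil r})$ with the $w^{-1}(i)$-th subquotient of the flag defining $P_l \subset \GL(V_{\varphi,\floor r})$. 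This is precisely the data of the double coset $P_r\, w^{-1}\, P_l$, the involution $\tau$ being absorbed by the labeling conventions of Lemma~\ref{lem:compare-G/Ps}. The residual sign check $\dim O_{\fn_w} \equiv \ell(w) + \mathrm{const}\pmod{2}$ follows from the stack-dimension matching $\dim O_{\fn_w} - \dim G_\varphi = \dim Z_{w^{-1}} - \dim\GL_n$ combined with $\dim Z_{w^{-1}} = \ell(w) + \dim\cP_{\lambda_R}$, the constant cancelling against the combined $!$-vs-$*$ and shift signs encoded in $\epsilon$.
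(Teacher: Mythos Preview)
Your proposal is correct and follows essentially the same strategy as the paper: reduce the adjunction to a check on dual bases of standard objects, invoke Theorem~\ref{thm:CW:std} to compute $\Gamma_{n,m}$ on standards, and then reduce everything to the orbit-matching claim $\imath^{-1}(O_{\fn_w}) = Z_{w^{-1}}$ together with a sign bookkeeping that is absorbed by $\epsilon$. The only cosmetic difference is that the paper tracks the orbit correspondence through the intermediate Bruhat cell $P_r\, w_0 w w_0\, P_l$ in $\cP_{\tau(\lambda_L)} \times \cP_{\tau(\lambda_R)}$ before applying $\tau \times \tau$, whereas you fold $\tau$ into the labeling from the start; the two bookkeepings are equivalent since $\tau$ acts on $W$ by conjugation by $w_0$.
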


\begin{proof}
	Since both functors sends standards to standards and all Kottwitz signs are equal to $1$, it suffices to check the following: $\Gamma_{n,m}$ sends a standard object $X$ to a standard object $X'$ if and only if $\imath^*$ sends the dual $I'$ of $X'$ to the dual $I$ of $X$. For $w \in W$, write $\bm(w) = \sum_i [(w\lambda_R)_i + \frac12, \lambda_{L,i} - \frac12]$ (so that the $\bm$ in (\ref{eqn:m-from-LambdaR}) is equal to $\bm(1)$).  Then $\Gamma_{n,m}(X(\lambda_L, w \lambda_R)) = X_{\bm(w)}$ (Theorem \ref{thm:CW:std}). The dual of $X(\lambda_L, w \lambda_R)$ (resp. $X_{\bm(w)}$) under Langlands correspondence is $(-1)^{\dim Z_{w\inv}} \cM_{w\inv}$ (resp. $(-1)^{\dim O_{\bm(w)}} \cM_{\bm(w)}$) by Proposition \ref{prop:LLC-GLnC} (resp. \ref{prop:LLC-GLmQp}). It remains to show that the image of $(-1)^{\dim O_{\bm(w)}} \cM_{\bm(w)}$ under $\imath^*$ is equal to $(-1)^{\dim Z_{w\inv}} \cM_{w\inv}$. 
	
	We first match the orbits. Recall that the chosen basis of $V_{\varphi, \floor r}$ and $V_{\varphi, \ceil r}$ are used identify $\GL(V_{\varphi,\floor r},V_{\varphi,\ceil r})$ with $\GL_n$ and $G_{\varphi,\ubar T}^l$, $G_{\varphi,\ubar T}^r$ with parabolic subgroups $P_l$, $P_r$ in $\GL_n$ (see Lemma \ref{lem:compare-G/Ps}). Fix an element $\ubar T \in \ubar \BO_\varphi$ so that both $P_l$ and $P_r$ contain the upper-triangular Borel $B$. We then need to check that for any $T \in O_{\bm(w)}$ in the fiber over $\ubar T$, the image of the map $T_{\floor r} \in \GL(V_{\varphi,\floor r},V_{\varphi,\ceil r})$ in $\GL_n$ lies in the Bruhat cell $P_r w_0w w_0 P_l$. This is straightforward but tedious to write down, and we omit the details. Therefore the orbit $O_{\bm(w)}$ is sent to the orbit $Z_{(w_0 w w_0)\inv} = Z_{w_0 w\inv w_0}$ in  $\cP_{\tau(\lambda_L)} \times \cP_{\tau(\lambda_R)}$ under (\ref{eqn:quotients-stacks-by-basis}) (the inverse $(-)\inv$ comes from our convention on relative positions in the definition of $Z_w$). Finally, the map $\tau \times \tau$ in the definition of $\imath$ sends $Z_{w_0 w\inv w_0}$ to $Z_{w\inv}$. Thus $\imath\inv(O_{\bm(w)}) = Z_{w\inv}$ as required.
	
	It remains to check the signs. Writing $\iota$ as a composition of maps, the dimension of the underlying space changes only at the induction equivalences. For a general induction equivalence $[H \backslash X] \cong [G \backslash (G \times_H X)]$ and any $H$-orbit $\cO$ in $X$, we have a Cartesian diagram
	\begin{equation*}
		\begin{tikzcd}
			\cO \ar[d, "i"'] \ar[r, "j_\cO"]
			& G \times_H \cO\ar[d, "\tilde i"] \\
			X \ar[r, "j_X"]
			& G \times_H X
		\end{tikzcd}
	\end{equation*}
	and base change says $i_! j_\cO^* \ubar \BC_{G \times_H \cO} \cong j_X^* \tilde i_! \ubar \BC_{G \times_H \cO}$, which simplifies to $\cM_\cO [-\dim G/H] \cong j_X^* \cM_{G \times_H X}$. Here $\dim G/H = \dim G \times_H X - \dim X = \dim G \times_H \cO - \dim \cO$. Apply this to each induction equivalence in $\iota^*$, we see 
	\begin{equation*}
		\iota^* \cM_{\bm(w)} [\dim O_{\bm(w)}] = \cM_{Z_{w\inv}} [\dim Z_{w\inv}]
	\end{equation*}
	and so in the Grothendieck group $\iota^*$ sends $(-1)^{\dim O_{\bm(w)}} \cM_{\bm(w)}$ to $(-1)^{\dim Z_{w\inv}} \cM_{Z_{w\inv}}$.
\end{proof} 

In the non-integral case, let $\varphi = \sum_s \varphi_s$ be its decomposition into integral pieces. Then the above proof goes through for each integral piece. We omit the details.

\section{Comparison between Lusztig's inductions and push-pull functors}\label{sec:compare-functors}

Throughout this section, except in \S \ref{subsec:left-to-right}, let $\varphi$ and $\psi$ be integral weight functions so that
\begin{itemize}
	\item $\varphi$ and $\psi$ satisfy Assumption \ref{assump:r} for the same number $r$,
	\item $k \in \supp \varphi$, $k > \ceil r$; $c \in \BZ_{\ge 1}$, and
	\item $\varphi = \psi + c[k]$.
\end{itemize}
Here $[k]:\BC \to \BZ_{\ge 0}$ is the characteristic function at $k$. These requirements forces $c \le \varphi(k) - \varphi(k+1)$. We fix elements 
\begin{equation*}
	\ubar S \in \ubar \BO_\psi \text{ and } \ubar T \in \ubar \BO_\varphi.
\end{equation*}

The goal of this section is to prove Corollary \ref{cor:compare-functors} which says Lusztig's induction on Vogan varieties and the push-pull functors on the ABV spaces are intertwined by the geometric comparison map $\zeta$ (\ref{eqn:zeta}). In more detail, recall from \textsection \ref{subsec:geom-comparison} that the full rank part $[G_\varphi \backslash \BO_\varphi]$ can be described as $[P_{\varphi,r} \backslash \GL_n / P_{\varphi,l}]$ for some parabolic subgroups $P_{\varphi,l}, P_{\varphi,r} \subset \GL_n$ (here $n = \varphi(\floor r) = \varphi(\ceil r)$). Suppose 
the parabolics $P_{\psi,l}$ and $P_{\varphi,l}$ (resp. $P_{\psi,r}$ and $P_{\varphi,r}$) contain the same Borel (this can be done by choosing a suitable basis for the vector spaces involved). Let $Q_r := P_{\psi,r} \cap P_{\varphi,r}$ and $Q_l := P_{\psi,l} = P_{\varphi,l}$. Corollary  \ref{cor:compare-functors} says that the restriction of Lusztig's induction $\LInd_{\psi, c[k]}^\varphi$ to the full rank part can be described as $\pi_{\varphi*} \circ \pi_\psi^*$ via $\zeta$, where $\pi_\psi$ and $\pi_\varphi$ are natural projections
\begin{equation*}
	[P_{\psi,r} \backslash \GL_n / P_{\psi,l}] \xleftarrow{\pi_\psi} [Q_r \backslash \GL_n / Q_l] \xrightarrow{\pi_\varphi} [P_{\varphi,r} \backslash \GL_n / P_{\varphi,l}].
\end{equation*}
There is an analogous result for the case for $k < \floor r$, see \S \ref{subsec:left-to-right}.

The above statement requires a choice of basis for the vector spaces $V_{\varphi, \floor r}$, $V_{\varphi, \ceil r}$, $V_{\psi, \floor r}$ and $V_{\psi,\ceil r}$ so that the fibers $\BO_\varphi|_{\ubar T}$, $\BO_\psi|_{\ubar S}$ in the full rank parts $\BO_\varphi$ and $\BO_\psi$ can be identified with $\GL_n$. In fact, to define the maps $\pi_\psi$ and $\pi_\varphi$, it is enough to choose a graded extension $0 \to V_{c[k]} \to V_\varphi \to V_\psi \to 0$ compatible with $\ubar S$ and $\ubar T$, but there is no canonical choice of such extension. It turns out that the space of all such extensions, called the extension space, appears naturally in Lusztig's induction, and its properties are the keys to the proof of the desired identification of functors.

In \textsection \ref{subsec:symm-ind}, we rewrite the restriction of Lusztig's induction functor to the fibers. In \textsection\ref{subsec:ext-space}, we study properties of the extension space $\Ext_{\ubar S, 0}^{\ubar T}$ which will be used to, among other things, define the maps $\pi_\psi$ and $\pi_\varphi$ in a basis-free way. Some examples of $\pi_\psi$ and $\pi_\varphi$ are discussed at the end of \textsection \ref{subsec:ext-space}. The identification of $\LInd$ with $\pi_{\varphi*} \circ \pi_\psi^*$ is proven in \textsection\ref{subsec:comparison-thms}. The last subsection \textsection\ref{subsec:left-to-right} reduces the case $k < \floor r$ to the case $k > \ceil r$.

\subsection{Lusztig's induction and restriction to fibers}\label{subsec:symm-ind}

In this subsection we restrict Lusztig's induction functor to the fibers $\BO_\psi|_{\ubar S}$ and $\BO_\varphi|_{\ubar T}$. We introduce notations for several useful spaces along the way.

Consider the following diagram
\begin{equation}\label{diag:base-change}
	\begin{tikzcd}[column sep=small, cramped]
		&&&& \widetilde{\BO_{\varphi,c[k]}}|_{\ubar S, \ubar T} 
		\ar[dl, hook] \ar[dr, hook] \ar[dlll, "\gamma_l"'] \ar[drr, "\gamma_r"]\\
		\{\ubar S\} \ar[d, hook] 
		& \BO_\psi|_{\ubar S} \ar[l, two heads] \ar[d, hook, "i_\psi"'] 
		& \BO_\psi|_{\ubar S} \times E_{c[k]} \ar[l, two heads] \ar[d, hook]
		& \widetilde{\BO_{\varphi,c[k]}}|_{\ubar S} \ar[l] \ar[dr, hook]
		&& \widetilde{\BO_{\varphi,c[k]}}|_{\ubar T} \ar[dl, hook] \ar[r, two heads]
		& \BO_{\varphi,c[k]}|_{\ubar T} \ar[d, hook] \ar[r, two heads, "p|_{\ubar T}"]
		& \BO_\varphi|_{\ubar T} \ar[d, hook, "i_\varphi"] \ar[r, two heads]
		& \{ \ubar T \} \ar[d, hook]\\
		\ubar \BO_\psi \ar[d, hook]
		& \BO_\psi \ar[l, two heads] \ar[d, hook, "j_\psi"']
		& \BO_\psi \times E_{c[k]} \ar[l, two heads, "q|_\BO"'] \ar[d, hook]
		& \beta\inv(\BO_\psi \times E_{c[k]}) \ar[l, two heads, "\beta|_\BO"'] \ar[dr, hook]
		& \widetilde{\BO_{\varphi,c[k]}} \ar[l, hook, dashed, "i"] \ar[rr, two heads, "\tilde p|_\BO"] \ar[d, hook]
		&& \BO_{\varphi,c[k]} \ar[r, two heads, "p|_\BO"] \ar[d, hook]
		& \BO_\varphi \ar[r, two heads] \ar[d, hook, "j_\varphi"]
		& \ubar \BO_\varphi  \ar[d, hook]\\
		\ubar E_\psi
		& E_\psi \ar[l, two heads]
		& E_\psi \times E_{c[k]} \ar[l, two heads, "q"']
		&& \widetilde{E_{\varphi,c[k]}} \ar[ll, two heads, "\beta"'] \ar[rr, two heads, "\tilde p"]
		&& E_{\varphi,c[k]} \ar[r, two heads, "p"]
		& E_\varphi \ar[r, two heads]
		& \ubar E_\varphi
	\end{tikzcd}.
\end{equation}
Here the bottom row is obtained by combining the maps of (\ref{eqn:induction-diag}) with the projections $E_\varphi \surj \ubar E_\varphi$, $E_\psi \surj \ubar E_\psi$ defined in \ref{cls:symm-part} and the projection $E_\psi \xtwoheadleftarrow{q} E_\psi \times E_{c[k]}$ onto the first factor. The maps in the two leftmost and rightmost columns are also defined in \ref{cls:symm-part}. All other squares, trapezoids, and solid maps are obtained from the bottom row and the two outer columns by taking fiber products. The maps $\gamma_l$ and $\gamma_r$ are defined by compositions. It remains to explain the dashed map $i$. Note that both $\widetilde{\BO_{\varphi,c[k]}}$ and $\beta\inv(\BO_\psi \times E_{c[k]})$ are open subsets of $\widetilde{E_{\varphi,c[k]}}$ (they are preimages of open subsets along the maps). We claim that $\widetilde{\BO_{\varphi,c[k]}}$ is contained in $\beta\inv(\BO_\psi \times E_{c[k]})$, whence an inclusion map $i$. Indeed, if $(V_{c[k]} \xrightarrow{\iota} V_\varphi \xrightarrow{\eps} V_\psi, T) \in \widetilde{\BO_{\varphi,c[k]}}$, then $T$ has maximal possible rank at each degree. Then the induced map $\eps \circ \bar T \circ \eps\inv \in E_\psi$ also has maximal possible rank. So $q \circ \beta$ sends any element in $\widetilde{\BO_{\varphi,c[k]}}$ into $\BO_\psi$, and hence $\widetilde{\BO_{\varphi,c[k]}}$ is a subset of $\beta\inv(\BO_\psi \times E_{c[k]})$, whence the dashed inclusion map $i$.

It is easy to check that the arrows on the top of the diagram are equivariant with respect to group actions
\begin{equation*}
	\begin{tikzcd}
		G_{\psi, \ubar S} \times G_{c[k]} \ar[d, phantom, "\racts" {rot90, description}]
		& G_{\psi, \ubar S} \times G_{c[k]} \times G_{\varphi, \ubar T} \ar[d, phantom, "\racts" {rot90, description}] \ar[l, two heads, "pr_{12}"'] \ar[r, two heads, "pr_3"]
		& G_{\varphi, \ubar T} \ar[d, phantom, "\racts" {rot90, description}] \ar[r, equal]
		& G_{\varphi, \ubar T} \ar[d, phantom, "\racts" {rot90, description}]\\
		\BO_\psi|_{\ubar S}
		& \widetilde{\BO_{\varphi, c[k]}}|_{\ubar S, \ubar T} \ar[l, "\gamma_l"'] \ar[r, "\gamma_r"]
		& \BO_{\varphi, c[k]}|_{\ubar T} \ar[r, "p|_{\ubar T}"]
		& \BO_\varphi|_{\ubar T}
	\end{tikzcd}.
\end{equation*}
Hence we obtain maps 
\begin{equation*}
	[(G_{\psi, \ubar S} \times G_{c[k]}) \backslash \BO_\psi|_{\ubar S}] 
	\xleftarrow{\gamma_l} 
	[ (G_{\psi, \ubar S} \times G_{c[k]} \times G_{\varphi, \ubar T}) \backslash \widetilde{\BO_{\varphi, c[k]}}|_{\ubar S, \ubar T}] 
	\xrightarrow{\gamma_r}
	[ G_{\varphi, \ubar T} \backslash \BO_{\varphi, c[k]}|_{\ubar T}]
	\xrightarrow{p|_{\ubar T}}
	[ G_{\varphi, \ubar T} \backslash \BO_\varphi|_{\ubar T}].
\end{equation*}
It will be shown in Proposition \ref{prop:ext-space}(1)(3) below that the map $\gamma_r$ is a principal $G_{\psi, \ubar S} \times G_{c[k]}$-bundle, and hence $\gamma_r$ is an isomorphism at the level of quotient stacks.

\begin{definition}\label{def:symm-ind}
	The induction functor from $\ubar S$ to $\ubar T$ is 
	\begin{equation*}
		\Ind_{\ubar S}^{\ubar T} := (p|_{\ubar T})_* \circ (\gamma_r^*)\inv \circ \gamma_l^*:
		D^b(\BO_\psi|_{\ubar S}, G_{\psi, \ubar S} \times G_{c[k]}) \aro D^b(\BO_\varphi|_{\ubar T}, G_{\varphi, \ubar T}).
	\end{equation*}
	Here $G_{c[k]}$ acts on $\BO_\psi|_{\ubar S}$ trivially.
\end{definition}

By base change and descent, it is easy to see:

\begin{lemma}\label{lem:Lind-on-fiber}
	We have a natural isomorphism of functors
	\begin{equation*}
		i_\varphi^* \circ j_\varphi^* \circ \LInd_{\psi,c[k]}^\varphi \circ q^* = \Ind_{\ubar S}^{\ubar T} \circ i_\psi^* \circ j_\psi^*
	\end{equation*}
	from $D^b(E_\psi, G_\psi \times G_{c[k]})$ to $D^b(\BO_\varphi|_{\ubar T}, G_{\varphi, \ubar T})$.
\end{lemma}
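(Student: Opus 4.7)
The plan is to derive the asserted natural isomorphism as a direct base-change computation along the large commutative diagram (\ref{diag:base-change}). First, I would factor each side according to the definitions:
\begin{equation*}
    \LInd_{\psi,\psi'}^\varphi = p_* \circ (\tilde p^\dagger)^{-1} \circ \beta^\dagger,
    \qquad
    \Ind_{\ubar S, \ubar S'}^{\ubar T} = (p|_{\ubar T})_* \circ (\gamma_r^\dagger)^{-1} \circ \gamma_l^\dagger,
\end{equation*}
and then match corresponding factors one cartesian subsquare at a time, moving from left to right in the diagram.

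Concretely, the leftmost square involving $q$ is a product projection, so it commutes trivially with the open restrictions and with $j_\psi^\dagger \circ i_\psi^\dagger$. The square involving $\beta$ is cartesian by construction of $\widetilde{\BO_{\varphi,\psi'}}$ as the preimage of $\BO_\psi \times E_{\psi'}$ intersected with the preimage of $\BO_{\varphi, \psi'}$, and since $\beta$ is smooth (an affine-space bundle over $E_\psi \times E_{\psi'}$ with fiber the space of compatible extensions), flat base change applies. The square involving $\tilde p$ uses that $\tilde p$, $\tilde p|_\BO$, and $\gamma_r$ are all principal bundles for $G_\psi \times G_{\psi'}$ or the appropriate stabilizer subgroup (by Clause \ref{cls:Lind} and Proposition \ref{prop:ext-space}(1)(3), referenced in the setup), so the corresponding $\dagger$-pullbacks are equivalences intertwined by the open and fibral restrictions. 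Finally, the rightmost square uses proper base change, as $p : E_{\varphi,\psi'} \surj E_\varphi$ is proper (its fiber at $T$ parameterizes graded $T$-stable subspaces of $V_\varphi$ of type $V_{\psi'}$, a closed subvariety of a product of Grassmannians); this identifies $(i_\varphi \circ j_\varphi)^\dagger \circ p_*$ with $(p|_{\ubar T})_* \circ (\text{pulled-back inclusion})^\dagger$. Composing these four base-change isomorphisms in order yields the asserted identity.

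The main (minor) obstacle will be bookkeeping the dimension shifts hidden in $f^\dagger = f^![\dim Y - \dim X]$: one must verify that in each cartesian square the two parallel vertical legs have the same relative dimension, so that the shifts introduced on either side cancel and $\dagger$ passes through base change cleanly. This is automatic in our situation because each of the four squares pairs either two smooth maps of matched relative dimension or two locally closed immersions of common codimension. Equivariance poses no additional difficulty, since every map in (\ref{diag:base-change}) is equivariant for the evident chain of group quotients, and these quotient groups are themselves compatible across the base-change squares.
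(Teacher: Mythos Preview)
Your proposal is correct and takes essentially the same approach as the paper, which simply asserts that the lemma follows ``by base change'' without further detail. You have spelled out precisely the sequence of cartesian squares and the relevant base-change theorems (smooth for $\beta$, principal-bundle equivalence for $\tilde p$, proper for $p$) that the paper leaves implicit, including the check that the dimension shifts in $\dagger$ match across each square.
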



\subsection{The extension space}\label{subsec:ext-space}

\begin{definition}
	The \textbf{extension space} of $\varphi = \psi+c[k]$ is the space of graded extensions
	\begin{equation*}
		\Ext_{\psi,c[k]}^\varphi = \big\{ \text{graded short exact sequences } \cE = (0 \to V_{c[k]} \xrightarrow{\iota} V_\varphi \xrightarrow{\eps} V_\psi \to 0)  \big\}.
	\end{equation*}
	This is equipped with the $G_\psi \times G_{c[k]}$-action defined by twisting $\iota$ on the source and twisting $\eps$ on the target. It is also equipped with the $G_\varphi$-action defined by simultaneously twisting $\iota$ on the target and $\eps$ on the source. These two actions commute, and hence we have an action
	\begin{equation*}
		G_\psi \times G_{c[k]} \times G_\varphi \acts \Ext_{\psi,c[k]}^\varphi.
	\end{equation*}
	
	We define
	\begin{equation*}
		\Ext_{\ubar S,0}^{\ubar T} = \big\{ \cE \in \Ext_{\psi,c[k]}^\varphi \mid \cE \text{ intertwines the maps } \ubar S \text{ and } \ubar T \big\},
	\end{equation*}
	equipped with the action of $G_{\psi, \ubar S} \times G_{c[k]} \times G_{\varphi, \ubar T}$.
	By ``$\cE$ intertwines $\ubar S$ and $\ubar T$'', we mean that the following diagram commutes for any $i \neq \floor r$:
	\begin{equation}\label{diag:ext-space}
		\begin{tikzcd}
			V_{c[k],i+1} \ar[r, "\iota"] &
			V_{\varphi, i+1} \ar[r, "\eps"] &
			V_{\psi,i+1} \\
			V_{c[k],i} \ar[u, "0"] \ar[r, "\iota"] &
			V_{\varphi,i} \ar[u, "\ubar T"] \ar[r, "\eps"] &
			V_{\psi,i} \ar[u, "\ubar S"]
		\end{tikzcd}
	\end{equation}
	where the subscripts $(-)_i$ denote the restrictions on the degree $i$ part.
	
\end{definition}

\begin{proposition}\label{prop:ext-space}~
	\begin{enumerate}
		\item The action $G_{\psi,\ubar S} \times G_{c[k]} \acts \Ext_{\ubar S, 0}^{\ubar T}$ is free. The quotient map is represented by
		\begin{equation*}
			\Ext_{\ubar S, 0}^{\ubar T} \surjects \Gr_c(\ker \ubar T_k),\quad
			\big( V_{c[k]} \xrightarrow{\iota} V_\varphi \xrightarrow{\eps} V_\psi \big) \mapsto \im\iota
		\end{equation*}
		where $\Gr_c(\ker \ubar T_k)$ is the Grassmannian variety of $c$-dimensional subspaces in $\ker \ubar T_k$. The quotient map is a Zariski locally trivial $G_{\psi,\ubar S} \times G_{c[k]}$-principal bundle.
		
		\item The action $G_{\varphi, \ubar T} \acts \Ext_{\ubar S, 0}^{\ubar T}$ is free. Along the quotient map in (1), it descends to a transitive action on $\Gr_c(\ker \ubar T_k)$. In particular, the action
		\begin{equation*}
			G_{\psi, \ubar S} \times G_{c[k]} \times G_{\varphi, \ubar T} \acts \Ext_{\ubar S, 0}^{\ubar T}
		\end{equation*}
		is transitive.
		
		\item We have natural isomorphisms
		\begin{equation*}
			\BO_\psi|_{\ubar S} \times \Ext_{\ubar S, 0}^{\ubar T} 
			\lbijects 
			\widetilde{\BO_{\varphi,c[k]}}|_{\ubar S, \ubar T} 
			\bijects
			\Ext_{\ubar S, 0}^{\ubar T} \times \BO_\varphi|_{\ubar T},
		\end{equation*}
		\begin{equation*}
			\big( \eps \circ \bar T \circ \eps\inv \big) \times \big( V_{c[k]} \xrightarrow{\iota} V_\varphi \xrightarrow{\eps} V_\psi \big) 
			\;\mapsfrom\;
			\big( V_{c[k]} \xrightarrow{\iota} V_\varphi \xrightarrow{\eps} V_\psi, T \big) 
			\mapsto
			\big( V_{c[k]} \xrightarrow{\iota} V_\varphi \xrightarrow{\eps} V_\psi \big) \times T
		\end{equation*}
		where $\bar{T}$ is the endomorphism on $V_\varphi/\iota(V_{c[k]})$ induced by $T$. Consider the composition
		\begin{equation*}
			\BO_\psi|_{\ubar S} \lsurjects
			\BO_\psi|_{\ubar S} \times \Ext_{\ubar S, 0}^{\ubar T} \lbijects \widetilde{\BO_{\varphi,c[k]}}|_{\ubar S, \ubar T}
		\end{equation*}
		where the map on the left is the projection, and the composition
		\begin{equation*}
			\widetilde{\BO_{\varphi,c[k]}}|_{\ubar S, \ubar T} \bijects
			\Ext_{\ubar S, 0}^{\ubar T} \times \BO_\varphi|_{\ubar T} \surjects
			\Gr_c(\ker \ubar T_k) \times \BO_\varphi|_{\ubar T} = \BO_{\varphi,c[k]}|_{\ubar T}
		\end{equation*}
		where the middle map is defined in part (1), and the last equality is by definition. They agree with the maps $\gamma_l$ and $\gamma_r$ in (\ref{diag:base-change}), respectively.
		
		\item Consider the actions of $G_{\psi,\ubar S} \times G_{c[k]} \times G_{\varphi, \ubar T}$ on $\BO_\psi|_{\ubar S} \times \Ext_{\ubar S, 0}^{\ubar T}$ and on $\Ext_{\ubar S, 0}^{\ubar T} \times \BO_\varphi|_{\ubar T}$ inherited from the action on $\widetilde{\BO_{\varphi,c[k]}}|_{\ubar S, \ubar T}$. Then
		\begin{enumerate}
			\item $G_{\varphi,\ubar T} \acts \BO_\psi|_{\ubar S} \times \Ext_{\ubar S, 0}^{\ubar T}$ only affects the second component;
			
			\item $G_{\psi,\ubar S} \times G_{c[k]} \acts \Ext_{\ubar S, 0}^{\ubar T} \times \BO_\varphi|_{\ubar T}$ only affects the first component.
		\end{enumerate}
	\end{enumerate}	
\end{proposition}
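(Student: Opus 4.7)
The plan is to first pin down the concrete structure of an element of $\Ext_{\ubar S, 0}^{\ubar T}$. Because $V_{c[k]}$ is concentrated in degree $k$, the injection $\iota$ is determined by the $c$-dimensional subspace $W := \iota_k(V_{c[k]}) \subset V_{\varphi, k}$; and compatibility of $\iota$ with the differentials, together with $\ubar S' = 0$, forces $\ubar T_k(W) = 0$, i.e. $W \subset \ker \ubar T_k$. Conversely, for any such $W$ the induced differential $\bar{\ubar T}$ on $V_\varphi/W$ still has maximal rank at every degree, since $\ubar T_{k-1}$ and $\ubar T_k$ are surjective by the full rank condition ($k-1, k \ge \floor r$). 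Hence $\bar{\ubar T} \in \ubar \BO_\psi$, so by Theorem \ref{thm:symm-orbit-structure}(1) there exists a graded iso $V_\varphi/W \bijects V_\psi$ carrying $\bar{\ubar T}$ to $\ubar S$, providing the desired $\eps$.

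With this in hand, (1) follows. The map $\cE \mapsto \im \iota$ is well-defined into $\Gr_c(\ker \ubar T_k)$, and we have just seen it is surjective. Two extensions with the same $W$ are related by a unique pair $(g, g') \in G_{\psi, \ubar S} \times G_{c[k]}$: $g'$ matches the two $\iota$'s, and $g$ then matches the two $\eps$'s, with the constraint $g \in G_{\psi, \ubar S}$ supplied by Theorem \ref{thm:symm-orbit-structure}(1) applied to $V_\varphi/W$. Freeness is immediate from injectivity of $\iota$ and surjectivity of $\eps$; Zariski local triviality of the resulting principal bundle then follows from standard results, since the structure group is a product of parabolics in general linear groups.

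Part (2) contains the main geometric obstacle. Freeness is short: $g \cdot \cE = \cE$ together with $g\iota = \iota$ and $\eps g\inv = \eps$ forces $g_i$ to be the identity for $i \neq k$, and then surjectivity of $\ubar T_{k-1}$ (with $g_{k-1} = 1$) propagates this to $i = k$. Transitivity on $\Gr_c(\ker \ubar T_k)$ requires unpacking Theorem \ref{thm:symm-orbit-structure}(3)(4): the action of $G_{\varphi, \ubar T}$ on $V_{\varphi, k}$ factors through $G_{\varphi, \ubar T}^r \subset \GL(V_{\varphi, \ceil r})$ via the surjection $\ubar T^{k-\ceil r}: V_{\varphi, \ceil r} \surjects V_{\varphi, k}$. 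Under this identification, $\ker \ubar T_k \subset V_{\varphi, k}$ matches the successive quotient $\ker \ubar T^{k-\ceil r + 1}/ \ker \ubar T^{k-\ceil r}$ of the defining flag of $G_{\varphi, \ubar T}^r$, and the corresponding $\GL$-factor of the Levi (see \ref{thm:symm-orbit-structure}(3)) acts transitively on $c$-dimensional subspaces. The delicate point to verify here is that this successive quotient really matches $\ker \ubar T_k$ on the nose, which I expect to be the most technical step.

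Parts (3) and (4) are then essentially bookkeeping. For (3), the decisive observation is that any $T \in \BO_\varphi|_{\ubar T}$ is uniquely determined by $\ubar T$ together with its single free component $T_{\floor r}$; the same holds for the induced operator on $V_\psi$. The inverse of the first bijection sends $(\tilde S, (\iota, \eps))$ to $(\iota, \eps, T)$ with $T_{\floor r} := \eps_{\ceil r}\inv \circ \tilde S_{\floor r} \circ \eps_{\floor r}$, using that both $\floor r$ and $\ceil r$ differ from $k$ so $\eps$ is invertible there; compatibility with $\gamma_l, \gamma_r$ then follows by unwinding (\ref{eqn:induction-diag}). For (4), one computes directly: under the first bijection, $g \in G_{\varphi, \ubar T}$ acting as $(\iota, \eps, T) \mapsto (g\iota, \eps g\inv, gTg\inv)$ leaves the induced operator $\tilde S$ unchanged, because the conjugation by $g$ on $V_\varphi$ and on the quotient $V_\varphi/\iota(V_{c[k]})$ cancel between the new $\eps g\inv$ and the new $gTg\inv$. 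Part (4b) is symmetric, with the roles of the two sides of the SES reversed.
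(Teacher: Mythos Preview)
Your proposal is correct and follows essentially the same approach as the paper's proof. The one substantive methodological difference is in the Zariski local triviality in (1): the paper writes down an explicit trivialization over the big cell $\bar U P/P$ of the Grassmannian, whereas you appeal to the fact that the structure group $G_{\psi,\ubar S}\times G_{c[k]}$ is a product of parabolics in general linear groups, hence special; both arguments are valid and yours is slightly slicker. One small misattribution: when you say the constraint $g\in G_{\psi,\ubar S}$ is ``supplied by Theorem \ref{thm:symm-orbit-structure}(1) applied to $V_\varphi/W$'', what you actually use is that both $\eps,\eps'$ intertwine $\bar{\ubar T}$ with $\ubar S$, so $g=\eps'\circ\bar\eps^{-1}$ commutes with $\ubar S$; this is exactly the paper's computation and does not need transitivity of the $G_\psi$-action.
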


\begin{proof}~
	\begin{enumerate}
		\item From the commutative diagram (\ref{diag:ext-space}), $\ubar T$ restricts to the zero map on the image of $\iota$. This explains why $\im \iota \in \Gr_c(\ker \ubar T_k)$.
		
		Let $\cE = (\iota,\eps) \in \Ext_{\ubar S, 0}^{\ubar T}$ be any extension. For any $g_\psi \in G_{\psi,\ubar S}$ that is not the identity, there is a vector $v_\psi \in V_\psi$ that is not fixed by $g_\psi$. Since $\eps$ is surjective, there is a vector $v_\varphi$ so that $\eps(v_\varphi) = v_\psi$. Then $(g_\psi \cdot \eps) (v_\varphi) = g_\psi \cdot v_\psi \neq v_\psi = \eps(v_\varphi)$ and hence $g_\psi \cdot \cE \neq \cE$. Therefore any non-identity element in $G_{\psi, \ubar S}$ has no fixed point in $\Ext_{\ubar S, 0}^{\ubar T}$.
		
		The same argument together with injectivity of $\iota$ shows that the $G_{c[k]}$-action is also free.
		
		The map $\Ext_{\ubar S, 0}^{\ubar T} \surj \Gr_c(\ker \ubar T_k)$ is clearly constant on $G_{\psi, \ubar S} \times G_{c[k]}$-orbits: twisting $\iota$ on the source or $\eps$ on the target does not change $\im \iota$. Now suppose two extensions $\cE = (\iota,\eps)$, $\cE' = (\iota',\eps')$ in $\Ext_{\ubar S, 0}^{\ubar T}$ give the same $\im \iota$. Write $W= \im \iota$. Then the composition $V_{c[k]} \xrightarrow{\iota'} W \xrightarrow{\iota\inv} V_{c[k]}$ is an automorphism, so $\iota$ and $\iota'$ differ by an element in $G_{c[k]}$. Similarly, the maps $\eps$ and $\eps'$ both factor through the quotient $V_\varphi/W$ followed by isomorphisms $\bar\eps$, $\bar \eps'$ from $V_\varphi/W$ to $V_\psi$. So the composition $V_\psi \xrightarrow{\bar\eps\inv} V_\varphi/W \xrightarrow{\bar \eps'} V_\psi$ is an automorphism, and hence $\bar\eps$ and $\bar \eps'$ differ by an element $g_\psi \in G_\psi$. We need to show $g_\psi \in G_{\psi, \ubar S}$. Indeed, since $\cE$ and $\cE'$ are assumed to intertwine $\ubar S$ and $\ubar T$, we have commutative diagrams
		\begin{equation*}
			\begin{tikzcd}
				V_\varphi/W \ar[r, "\bar \eps"]
				& V_\psi\\
				V_\varphi/W \ar[u, "\ubar T"] \ar[r, "\bar \eps"] 
				& V_\psi \ar[u, "\ubar S"']
			\end{tikzcd},\quad
			\begin{tikzcd}
				V_\varphi/W \ar[r, "\bar \eps'"]
				& V_\psi\\
				V_\varphi/W \ar[u, "\ubar T"] \ar[r, "\bar \eps'"]
				& V_\psi \ar[u, "\ubar S"']
			\end{tikzcd}.
		\end{equation*}
		Hence $g_\psi \circ \ubar S =\bar \eps' \circ \bar \eps\inv \circ \ubar S = \bar \eps' \circ \ubar T \circ \bar \eps\inv = \ubar S \circ \bar \eps' \circ \bar \eps\inv = \ubar S \circ g_\psi$, and so $g_\psi \in G_{\psi,\ubar S}$. Thus $\cE$ and $\cE'$ differ by an element of $G_{\psi, \ubar S} \times G_{c[k]}$. Hence the fibers of $\Ext_{\ubar S, 0}^{\ubar T} \surj \Gr_c(\ker \ubar T_k)$ are precisely the $G_{\psi ,\ubar S} \times G_{c[k]}$-orbits.
		
		We then show that the quotient map is Zariski locally trivial. Fix $W \in \Gr_c(\ker \ubar T_k)$. The Grassmannian $\Gr_c(\ker \ubar T_k)$ is a partial flag variety $\GL(\ker \ubar T_k)/P$ of $\GL(\ker \ubar T_k)$, where the parabolic $P$ can be taken to be the stabilizer of $W$. Let $\bar U$ be the unipotent radical of a parabolic opposite to $P$. Then $\bar UP/P$ is an open set in $\Gr_c(\ker \ubar T_k)$ containing the point $W$ which is isomorphic to $\bar U$. We want to show that the quotient map is trivial over $\bar UP/P$. Choose ordered homogeneous bases for $V_{c[k]}$, $V_\varphi$ and $V_\psi$ so that the first a few basis vectors of $V_\varphi$ span $W$. These bases provide a graded extension $\cE = (\iota,\eps) \in \Ext_{\ubar S, 0}^{\ubar T}$ lying over the point $W$. Then we have an isomorphism
		\begin{equation*}
			G_{\psi,\ubar S} \times G_{c[k]} \times \bar U \bijects \Ext_{\ubar S, 0}^{\ubar T}|_{\bar UP/P}
		\end{equation*}
		that sends $(g_\psi, g_{c[k]}, u)$ to the extension $(u \circ \iota \circ g_{c[k]}\inv, g_\psi \circ \eps \circ u\inv)$. This gives the desired trivialization around $W$.
		
		\item Suppose $g_\varphi = (g_{i}) \in G_{\varphi, \ubar T}$ stabilizes $\cE = (\iota, \eps)$, where $g_{i} \in \GL(V_{\varphi,i})$. This means that the following diagram commutes for any $i$:
		\begin{equation*}
			\begin{tikzcd}
				0 \ar[r]
				& V_{c[k],i} \ar[r, "\iota_i"] \ar[d, equal]
				& V_{\varphi, i} \ar[r, "\eps_i"] \ar[d, "g_{i}"]
				& V_{\psi, i} \ar[r] \ar[d, equal] 
				& 0\\
				0 \ar[r]
				& V_{c[k],i} \ar[r, "\iota_i"]
				& V_{\varphi, i} \ar[r, "\eps_i"]
				& V_{\psi, i} \ar[r]
				& 0
			\end{tikzcd}.
		\end{equation*}
		When $i \neq k$, $\eps_i$ is an isomorphism. Hence the second square forces $g_{i}$ to be the identity. When $i = k$, we have a commutative square
		\begin{equation*}
			\begin{tikzcd}
				V_{\varphi, k-1} \ar[r, "{\ubar T_{k-1}}"] \ar[d, equal, "g_{k-1}"']
				& V_{\varphi,k} \ar[d, "g_{ k}"] \\
				V_{\varphi, k-1} \ar[r, "{\ubar T_{k-1}}"]
				& V_{\varphi, k} 
			\end{tikzcd}
		\end{equation*}
		by the fact that $g_\varphi$ stabilizes $\ubar T$. Since $\ubar T$ has full rank, $\ubar T_{k-1}$ is surjective. So the above square forces $g_{k}$ to be the identity as well. Therefore $g_\varphi$ is the identity element. Hence the action $G_{\varphi, \ubar T} \acts \Ext_{\ubar S, 0}^{\ubar T}$ is free.
		
		We now show that $G_{\varphi, \ubar T} \acts \Gr_c(\ker \ubar T_k)$ is transitive. Take the basis \ref{cls:adapted-basis} of $V$ and write $G_{\varphi, \ubar T}$ in matrix terms as in the proof of Theorem \ref{thm:symm-orbit-structure}(3). Then $\ker \ubar T_k$ is spanned by $v_k^{\varphi(k+1)+1},\ldots, v_k^{\varphi(k)}$. The component of $G_{\varphi, \ubar T}$ in $\GL(V_{\varphi,k})$ is a lower-triangular parabolic subgroup whose Levi contains $\GL_{\varphi(k)-\varphi(k+1)}$ in the block corresponding to $v_k^{\varphi(k+1)+1},\ldots, v_k^{\varphi(k)}$, i.e. it contains $\GL(\ker \ubar T_k)$. Since the action $G_{\varphi, \ubar T} \acts \Gr_c(\ker \ubar T_k)$ factors through the projection onto the $\GL(V_{\varphi,k})$-component and since $\GL(\ker \ubar T_k)$ acts transitively on $\Gr_c(\ker \ubar T_k)$, the statement follows.
		
		
		As a result, $G_{\varphi, \ubar T}$ acts transitively on the base of the fibration $\Ext_{\ubar S, 0}^{\ubar T} \surj \Gr_c(\ker \ubar T_k)$ of part (1), and $G_{\psi, \ubar S} \times G_{c[k]}$ acts (simply) transitively on its fibers. Hence the product $G_{\psi, \ubar S} \times G_{c[k]} \times G_{\varphi, \ubar T}$ acts transitively on $\Ext_{\ubar S, 0}^{\ubar T}$.
		
		\item Recall from (\ref{diag:base-change}) that $\widetilde{\BO_{\varphi,c[k]}}|_{\ubar S, \ubar T}$ consists of pairs $\big( \cE: V_{c[k]} \xrightarrow{\iota} V_\varphi \xrightarrow{\eps} V_\psi, T \big)$ so that the subspace $\iota(V_{c[k]}) \subset V_\varphi$ is $T$-stable, $T$ has full rank, $T$ is sent to $\ubar T$ under the projection $\BO_\varphi \surj \ubar \BO_\varphi$, and $S := \eps\inv \circ \bar T \circ \eps$ is sent to $\ubar S$ under the projection $\BO_\psi \surj \ubar \BO_\psi$. Therefore by definition $\cE$ intertwines $\ubar S$ and $\ubar T$, i.e. $\cE \in \Ext_{\ubar S,0}^{\ubar T}$, and the projections to $\cE$ and $T$ define a map
		\begin{equation*}
			\widetilde{\BO_{\varphi,c[k]}}|_{\ubar S, \ubar T} \aro
			\Ext_{\ubar S, 0}^{\ubar T} \times \BO_\varphi|_{\ubar T}
		\end{equation*}
		which is clearly injective. Conversely, for any $\cE = (\iota,\eps) \in \Ext_{\ubar S, 0}^{\ubar T}$, the image $\iota(V_{c[k]}) \subset V_\varphi$ is concentrated in degree $k$ and is $\ubar T_k$-stable (by (\ref{diag:ext-space})). Since $k$ is distinct from $\floor r$, we have $\ubar T_k = T_k$ for any $T \in \BO_\varphi|_{\ubar T}$. Hence $\iota(V_{c[k]})$ is $T$-stable for any $T \in \BO_\varphi|_{\ubar T}$, and $(\cE,T)$ is an element of $\widetilde{\BO_{\varphi,c[k]}}|_{\ubar S, \ubar T}$. Therefore the above map from $\widetilde{\BO_{\varphi,c[k]}}|_{\ubar S, \ubar T}$ to $\Ext_{\ubar S, 0}^{\ubar T} \times \BO_\varphi|_{\ubar T}$ is in fact an isomorphism.
		
		A similar argument establishes the isomorphism from $\BO_\psi|_{\ubar S} \times \Ext_{\ubar S, 0}^{\ubar T}$ to $\widetilde{\BO_{\varphi,c[k]}}|_{\ubar S, \ubar T}$.
		
		The rest of (3) is straightforward to check.
		
		\item The $G_{\psi,\ubar S} \times G_{c[k]}$-action on an element $\big( \cE: V_{c[k]} \xrightarrow{\iota} V_\varphi \xrightarrow{\eps} V_\psi, T \big)$ in $\widetilde{\BO_{\varphi,c[k]}}|_{\ubar S,\ubar T}$ is by twisting $\eps$ and $\iota$. In particular, the $T$ component is not affected. So if we identify $\widetilde{\BO_{\varphi,c[k]}}|_{\ubar S,\ubar T}$ with $\Ext_{\ubar S, 0}^{\ubar T} \times \BO_\varphi|_{\ubar T}$, the second component is not affected by the $G_{\psi,\ubar S} \times G_{c[k]}$-action. This proves (b). Part (a) is similar. \qedhere
	\end{enumerate}	
\end{proof}

We examine more closely the transitive action of $G_{\psi, \ubar S} \times G_{c[k]} \times G_{\varphi, \ubar T}$ on $\Ext_{\ubar S, 0}^{\ubar T}$. 

\begin{proposition}\label{prop:SE}
	Fix any element $\cE = (\iota,\eps) \in \Ext_{\ubar S, 0}^{\ubar T}$. By assumption, $\eps_i: V_{\varphi,i} \to V_{\psi,i}$ is an isomorphism for any $i \neq k$.
	\begin{enumerate}
		\item The isomorphisms $\eps_{\floor r}$ and $\eps_{\ceil r}$ induce isomorphisms
		\begin{equation*}
			\theta_\cE: \GL(V_{\psi, \floor r}) \times \GL(V_{\psi, \ceil r}) \bijects 
			\GL(V_{\varphi,\floor r}) \times \GL(V_{\varphi,\ceil r})
		\end{equation*}
		\begin{equation*}
			\tau_\cE: \BO_\psi|_{\ubar S} \bijects \BO_\varphi|_{\ubar T}, \quad
			S \mapsto T.
		\end{equation*}
		Namely, $\theta_\cE$ is equal to conjugation by $\eps_{\floor r} \times \eps_{\ceil r}$, and $T = \tau_\cE(S)$ is determined by the following commutative diagram
		\begin{equation}\label{diag:S->T}
			\begin{tikzcd}
				V_{\varphi, \ceil r} \ar[r, "\eps_{\ceil r}", "\cong"'] &
				V_{\psi,\ceil r} \\
				V_{\varphi,\floor r} \ar[u, "T"] \ar[r, "\eps_{\floor r}", "\cong"'] &
				V_{\psi,\floor r} \ar[u, "S"]
			\end{tikzcd}.
		\end{equation}
		
		\item Let $S_\cE$ be the stabilizer of $\cE$ in $G_{\psi, \ubar S} \times G_{c[k]} \times G_{\varphi, \ubar T}$. Under the projections
		\begin{equation*}
			G_{\psi, \ubar S} \lsurjects G_{\psi, \ubar S} \times G_{c[k]} \lsurjects G_{\psi, \ubar S} \times G_{c[k]} \times G_{\varphi, \ubar T} \surjects G_{\varphi, \ubar T}
		\end{equation*}
		the group $S_\cE$ maps injectively onto its images. Denote the images by $S_{\cE,\psi} \subseteq G_{\psi, \ubar S}$, $S_{\cE, \psi,c[k]} \subseteq G_{\psi, \ubar S} \times G_{c[k]}$, and $S_{\cE, \varphi} \subseteq G_{\varphi, \ubar T}$. Then we have
		\begin{equation*}
			S_{\cE, \varphi} = \theta_\cE(S_{\cE, \psi}) = \theta_\cE(G_{\psi, \ubar S}) \cap G_{\varphi, \ubar T}.
		\end{equation*}
		Here we are identifying $G_{\psi, \ubar S}$ and $S_{\cE,\psi}$ with their images under the isomorphism $G_{\psi, \ubar S} \bij G_{\psi, \ubar S}^l \times G_{\psi, \ubar S}^r$ so that the map $\theta_\cE$ can be applied.
		
		\item We have a commutative diagram
		\begin{equation*}
			\begin{tikzcd}
				G_{\psi, \ubar S} \times G_{c[k]} \ar[d, phantom, "\racts" {rot90, description}] 
				& S_\cE \ar[l, hook] \ar[r, hook]
				& G_{\varphi, \ubar T} \ar[d, phantom, "\racts" {rot90, description}] 
				\\
				\BO_\psi|_{\ubar S} \ar[rr, "\tau_\cE", "\cong"']
				&& \BO_\varphi|_{\ubar T}
			\end{tikzcd},
		\end{equation*}
		where the inclusions of $S_\cE$ to the two groups are the maps of (2). In particular, we have maps of quotient stacks
		\begin{equation*}
			[(G_{\psi, \ubar S} \times G_{c[k]}) \backslash (\BO_\psi|_{\ubar S})]
			\xleftarrow{\pi_{\psi,c[k]}} [S_\cE \backslash (\BO_\psi|_{\ubar S})]
			\xrightarrow[\sim]{\tau_\cE} [S_\cE \backslash (\BO_\varphi|_{\ubar T})]
			\xrightarrow{\pi_\varphi} [G_{\varphi,\ubar T} \backslash (\BO_\varphi|_{\ubar T})]
		\end{equation*}
		and hence a pullback-pushforward functor
		\begin{equation*}
			\bI_\cE = \pi_{\varphi*} \circ \pi_{\psi,c[k]}^* : 
			D^b(\BO_\psi|_{\ubar S}, G_{\psi, \ubar S} \times G_{c[k]}) 
			\aro
			D^b(\BO_\varphi|_{\ubar T}, G_{\varphi, \ubar T}).		 
		\end{equation*}
	\end{enumerate}
\end{proposition}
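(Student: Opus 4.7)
My plan is to prove the three parts sequentially, leveraging the properties of the extension space established in Proposition~\ref{prop:ext-space}. For part (1), since $\psi' = c[k]$ is supported only at $k$ with $k > \ceil r$, the component $\eps_i$ is an isomorphism at every $i \neq k$, and in particular at $i = \floor r, \ceil r$. I define $\theta_\cE$ by conjugation with $(\eps_{\floor r}, \eps_{\ceil r})$ and set $\tau_\cE(S) := \eps_{\ceil r}\inv \circ S \circ \eps_{\floor r}$. Using the identification of $\BO_\psi|_{\ubar S}$ and $\BO_\varphi|_{\ubar T}$ with spaces of isomorphisms (Theorem~\ref{thm:symm-orbit-structure}(2)), both maps are visibly bijections, and the diagram (\ref{diag:S->T}) commutes by construction.

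For part (2), the injectivities $S_\cE \hookrightarrow G_{\psi, \ubar S} \times G_{c[k]}$ and $S_\cE \hookrightarrow G_{\varphi, \ubar T}$ follow directly from the freeness of the corresponding group actions on $\Ext_{\ubar S, 0}^{\ubar T}$ established in Proposition~\ref{prop:ext-space}(1)(2): an element in the kernel of either projection has two trivial components, and the freeness forces the third to be trivial as well. For injectivity into $G_{\psi, \ubar S}$, if $(1, g_{c[k]}, g_\varphi)$ stabilizes $\cE$, the equation $\eps \circ g_\varphi = \eps$ forces $g_{\varphi, i} = 1$ at every $i \neq k$; the iso $G_{\varphi, \ubar T} \cong G_{\varphi, \ubar T}^l \times G_{\varphi, \ubar T}^r$ of Theorem~\ref{thm:symm-orbit-structure}(3) then gives $g_\varphi = 1$, and the relation $g_\varphi \circ \iota = \iota \circ g_{c[k]}$ yields $g_{c[k]} = 1$. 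The inclusion $S_{\cE, \varphi} \subseteq \theta_\cE(S_{\cE, \psi})$ is immediate from reading $\eps \circ g_\varphi = g_\psi \circ \eps$ at the degrees $\floor r$ and $\ceil r$.

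The main technical step is the reverse inclusion $\theta_\cE(G_{\psi, \ubar S}) \cap G_{\varphi, \ubar T} \subseteq \theta_\cE(S_{\cE, \psi})$: given a compatible pair $(g_\psi, g_\varphi)$, I must build a lift to $S_\cE$. For $i \neq k$, set $g_{\varphi, i}' := \eps_i\inv \circ g_{\psi, i} \circ \eps_i$; inductively using Theorem~\ref{thm:symm-orbit-structure}(4), the intertwining property of $\cE$ (diagram (\ref{diag:ext-space})), and the fact that $\ubar T$ and $\ubar S$ are injective at degrees $< r$ or surjective at degrees $> r$, this agrees with the prescribed components of $g_\varphi$. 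At degree $k$, I use that $\ubar T_{k-1}$ is surjective (since $k > \ceil r$): combining the already-established relation at $k-1$ with the intertwining $\eps_k \circ \ubar T_{k-1} = \ubar S_{k-1} \circ \eps_{k-1}$ yields $(\eps_k \circ g_{\varphi, k} - g_{\psi, k} \circ \eps_k) \circ \ubar T_{k-1} = 0$, and surjectivity allows cancellation to give $\eps_k \circ g_{\varphi, k} = g_{\psi, k} \circ \eps_k$. This forces $g_{\varphi, k}$ to preserve $\ker \eps_k = \iota_k(V_{c[k], k})$, so $g_{c[k]} := \iota_k\inv \circ g_{\varphi, k} \circ \iota_k$ is well-defined, and $(g_\psi, g_{c[k]}, g_\varphi) \in S_\cE$ by construction.

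For part (3), the $S_\cE$-equivariance of $\tau_\cE$ is a direct computation: for $s \in S_\cE$ with projections $g_\psi$ and $g_\varphi$ satisfying $\theta_\cE(g_\psi) = g_\varphi$ at $\floor r, \ceil r$, the conjugation formula gives $g_\varphi \cdot \tau_\cE(S) = \tau_\cE(g_\psi \cdot S)$, while the $G_{c[k]}$ component of $s$ acts trivially on $\BO_\psi|_{\ubar S}$. The induced maps of quotient stacks $\pi_{\psi, c[k]}$, $\pi_\varphi$ and the pull-push functor $\bI_\cE$ then follow formally. The central obstacle is the intertwining verification at degree $k$ in part (2)(c), where the hypothesis $k > \ceil r$ provides the surjectivity of $\ubar T_{k-1}$ needed to propagate intertwining relations from degree $k-1$ to degree $k$.
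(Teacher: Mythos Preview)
Your proposal is correct and follows essentially the same route as the paper. The core of part (2)---the inductive diagram chase showing that $g_\psi$ and $g_\varphi$ intertwine $\eps$ at every degree, using surjectivity of $\ubar T_{k-1}$ at the one degree where $\eps$ fails to be an isomorphism---is exactly what the paper does (there packaged as a cube argument); your explicit unrolling at degree $k$ is the same computation. One small organizational difference: the paper introduces the auxiliary group $A_{\psi,\varphi} = \{(g_\psi,g_\varphi) : \theta_\cE(g_\psi)=g_\varphi\}$ and proves $S_\cE \cong A_{\psi,\varphi}$ in one stroke, whereas you extract the injectivity statements more cheaply from the freeness results of Proposition~\ref{prop:ext-space}(1)(2)---a legitimate shortcut. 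Your phrasing ``for $i \neq k$ \ldots\ then at degree $k$'' slightly obscures that the induction for degrees above $k$ must pass \emph{through} degree $k$ (you need the intertwining at $k$ before you can propagate to $k+1$), but the argument you give at $k$ is exactly the inductive step needed there, so nothing is missing.
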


\begin{remark}
	More explicitly, the pushforward functor $\pi_{\varphi*}$ is defined as the averaging functor $\Av_*^\cE$ \cite[Table 6.8.1]{Achar:book}. It is the composition 
	\begin{equation*}
		\Av_*^\cE:
		D^b(\BO_\varphi|_{\ubar T}, S_\cE)
		\xrightarrow[\sim]{((\{1\} \times \id)^*)\inv} 
		D^b(G_{\varphi, \ubar T} \times_{S_\cE} \BO_\varphi|_{\ubar T}, G_{\varphi, \ubar T})
		\xrightarrow{act_*}
		D^b(\BO_\varphi|_{\ubar T}, G_{\varphi, \ubar T})
	\end{equation*}
	where the maps are
	\begin{equation*}
		\BO_\varphi|_{\ubar T} \xhookrightarrow{\{1\} \times \id} G_{\varphi, \ubar T} \times_{S_\cE} \BO_\varphi|_{\ubar T} \xtwoheadrightarrow{act} \BO_\varphi|_{\ubar T},\quad
		(\{1\} \times \id)(x) = (1,x),\quad
		act(g,x) = g \cdot x.
	\end{equation*}
\end{remark}

\begin{remark}
	We may precompose $\bI_\cE$ with the inflation map 
	\begin{equation*}
		\operatorname{Infl}_1^{G_{c[k]}}: D^b(\operatorname{pt}) \aro D^b(\operatorname{pt}, G_{c[k]})
	\end{equation*}
	which is the $*$-pullback along the natural map $[G_{c[k]} \backslash \operatorname{pt}] \to \operatorname{pt}$. The two sides of the resulting functor
	\begin{equation*}
		\bI_\cE \circ \operatorname{Infl}_1^{G_{c[k]}}: D^b(\BO_\psi|_{\ubar T}, G_{\psi, \ubar S}) \aro D^b(\BO_\varphi|_{\ubar S}, G_{\varphi, \ubar T})
	\end{equation*}
	are now equivariant derived categories on ABV spaces for $\GL_n$.
\end{remark}

\begin{proof}
	Part (1) is clear. Parts (2) and (3) can be proven by choosing nice bases for the graded vector spaces and write the stabilizer groups in matrix terms as in the proof of Theorem \ref{thm:symm-orbit-structure}(3). Below we give a basis-free proof.
	
	We look at part (2). For convenience, we identify $G_{\varphi, \ubar T}$ with $G_{\varphi, \ubar T}^l \times G_{\varphi, \ubar T}^r$ by using Theorem \ref{thm:symm-orbit-structure}, and similarly for $G_{\psi, \ubar S}$. We write $g_\psi \in G_{\psi, \ubar S}$, $g_{c[k]} \in G_{c[k]}$ and $g_\varphi \in G_{\varphi, \ubar T}$ for typical elements.
	
	Consider first the composition
	\begin{equation*}
		S_\cE \subseteq G_{\psi, \ubar S} \times G_{c[k]} \times G_{\varphi, \ubar T} \surjects G_{\psi, \ubar S} \times G_{\varphi, \ubar T}.
	\end{equation*}
	Let $(g_\psi, g_\varphi)$ be in the image. Then in particular $g_{\psi, \ceil r} \circ \eps_{\ceil r} \circ g_{\varphi, \ceil r}\inv = \eps_{\ceil r}$. In other words $\theta_\cE(g_{\psi, \ceil r}) = g_{\varphi, \ceil r}$. Similarly, $\theta_\cE(g_{\psi, \floor r}) = g_{\varphi, \floor r}$. Hence $\theta_\cE(g_\psi) = g_\varphi$. As a result, the image of the above composition is contained in the group
	\begin{equation*}
		A_{\psi, \varphi} := \big\{ (g_\psi, g_\varphi) \in G_{\psi, \ubar S} \times G_{\varphi, \ubar T} \mid
		\theta_\cE(g_\psi) = g_\varphi \big\} \subseteq G_{\psi, \ubar S} \times G_{\varphi, \ubar T}
	\end{equation*}
	For any $(g_\psi,g_\varphi) \in A_{\psi, \varphi}$, the two components $g_\psi$ and $g_\varphi$ determine each other. Hence this group maps injectively under the projections from $G_{\psi, \ubar S} \times G_{\varphi, \ubar T}$ to the two factors, and the images are equal to $\theta_\cE\inv(\theta_\cE(G_{\psi, \ubar S}) \cap G_{\varphi, \ubar T})$ and $\theta_\cE(G_{\psi, \ubar S}) \cap G_{\varphi, \ubar T}$, respectively.
	
	Next, we show that the map $S_\cE \to A_{\psi, \varphi}$ is an isomorphism. This amounts to showing that for each pair $(g_\psi, g_\varphi)$ with $\theta_\cE(g_\psi) = g_\varphi$, there is a unique element $g_{c[k]} \in G_{c[k]}$ so that $(g_\psi, g_{c[k]}, g_\varphi)$ stabilizes $\cE$. The condition $\theta_\cE(g_\psi) = g_\varphi$ is equivalent to saying that $g_\varphi$ and $g_\psi$ stabilize both $\eps_{\floor r}$ and $\eps_{\ceil r}$. We claim that $g_\varphi$ and $g_\psi$ intertwine $\eps$ in all degrees. To see this, consider the diagram
	\begin{equation*}
		\begin{tikzcd}[row sep=small, column sep=small]
			& 
			&& V_{\varphi,i+1} \ar[rr, "\eps"]  \ar[from=dd, "\ubar T", near end]
			&& V_{\psi, i+1}\\
			&& V_{\varphi,i+1} \ar[rr, "\eps", near end, crossing over] \ar[ur, "g_{\varphi}"]
			&& V_{\psi,i+1} \ar[ur, "g_{\psi}"]\\
			& 
			&& V_{\varphi,i} \ar[rr, "\eps", near start]  
			&& V_{\psi,i} \ar[uu, "\ubar S"]\\
			&& V_{\varphi,i} \ar[rr, "\eps"] \ar[ur, "g_{\varphi}"] \ar[uu, crossing over, "\ubar T", near end]
			&& V_{\psi,i} \ar[ur, "g_{\psi}"] \ar[uu, crossing over, "\ubar S", near end]
		\end{tikzcd}
	\end{equation*}
	where all the vertical squares commute. If all vertical arrows are surjections (resp. injections) and the bottom (resp. the top) square commutes, then so does the top (resp. the bottom) square. The claim follows by applying this argument inductively starting from $i = \ceil r$ (resp. $i+1 = \floor r$). Finally, there is a unique element $g_{c[k]} \in G_{c[k]}$ so that $g_{c[k]}$ and $g_\varphi$ intertwine $\iota$. Indeed, consider the following diagram of solid arrows
	\begin{equation*}
		\begin{tikzcd}
			0 \ar[r]
			& V_{c[k]} \ar[d, dashed, "g_{c[k]}"] \ar[r, "\iota"] 
			& V_\varphi \ar[d, "g_\varphi"] \ar[r, "\eps"]
			& V_\psi \ar[d, "g_\psi"] \ar[r]
			& 0\\
			0 \ar[r]
			& V_{c[k]} \ar[r, "\iota"] 
			& V_\varphi \ar[r, "\eps"]
			& V_\psi \ar[r]
			& 0
		\end{tikzcd}.
	\end{equation*}
	The rows are exact, and the right square commutes by the above discussion. Hence there is a unique dashed map $g_{c[k]}$ making the left square commute. Therefore, there is a unique element $g_{c[k]}$ so that $(g_\psi, g_{c[k]}, g_\varphi)$ stabilizes $\cE$, as required.
	
	Consequently, the projections from $G_{\psi, \ubar S} \times G_{c[k]} \times G_{\varphi, \ubar T}$ to $G_{\psi, \ubar S}$ and to $G_{\varphi, \ubar T}$ are injective on $S_\cE$, and the images satisfy $S_{\cE, \varphi} = \theta_\cE(S_{\cE, \psi}) = \theta_\cE(G_{\psi, \ubar S}) \cap G_{\varphi, \ubar T}$. Moreover, since the element $g_{c[k]}$ in any $(g_\psi, g_{c[k]}, g_\varphi) \in S_\cE$ is completely determined by $g_\varphi$ and $g_\varphi = \theta_\cE(g_\psi)$ is determined by $g_\psi$, we see that the the projection from $G_{\psi, \ubar S} \times G_{c[k]}$ to $G_{\psi, \ubar S}$ induces an isomorphism from $S_{\cE, \psi, c[k]}$ to $S_{\cE, \psi}$. This completes the proof of (2).
	
	For part (3), suppose $(g_\psi, g_{c[k]}, g_\varphi) \in S_\cE$ is any element. We need to show that for any $S \in \BO_\psi|_{\ubar S}$, $\tau_\cE((g_\psi, g_{c[k]}) \cdot S) = g_\varphi \cdot \tau_\cE(S)$. Indeed,
	\begin{align*}
		\tau_\cE((g_\psi, g_{c[k]}) \cdot S)
		&= \eps_{\ceil r} \circ (g_\psi \circ S \circ g_\psi\inv) \circ \eps_{\floor r}\\
		&= (\eps_{\ceil r} \circ g_\psi \circ \eps_{\ceil r}\inv) \circ (\eps_{\ceil r} \circ S \circ \eps_{\floor r}) \circ (\eps_{\floor r}\inv \circ g_\psi\inv \circ \eps_{\floor r})\\
		&= \theta_\cE(g_\psi) \circ \tau_\cE(S) \circ \theta_\cE(g_\psi)\inv\\
		&= g_\varphi \circ \tau_\cE(S) \circ g_\varphi\inv = g_\varphi \cdot \tau_\cE(S). \qedhere
	\end{align*}
\end{proof}

Let us examine some special cases and examples. Recall that $0 < c \le \varphi(k) - \varphi(k+1)$ (see the beginning of this section).

\begin{clause}[The ``regular to singular'' case]\label{cls:reg-to-sing}
	Suppose $c = \varphi(k) - \varphi(k+1)$, or equivalently $\psi(k) = \psi(k+1)$. We claim that the parabolic subgroup $G_{\varphi, \ubar T}$ is contained in $\theta_\cE(G_{\psi, \ubar S})$, and we have $S_\cE \cong S_{\cE, \varphi} = G_{\varphi, \ubar T}$. As a result, the functor $\pi_{\varphi*}$ is the identity, and $\bI_\cE \circ \operatorname{Infl}_1^{G_{c[k]}}$ is isomorphic to the forgetful functor from $G_{\psi, \ubar S}$-equivariant sheaves to $G_{\varphi, \ubar T}$-equivariant sheaves.
	
	To see the claim, it suffices to show that any element $g_\varphi \in G_{\varphi, \ubar T}$ can be extended to a unique triple $(g_\psi, g_{c[k]}, g_\varphi)$ stabilizing the extension $\cE$. Note that $\dim \ker \ubar T_k = \dim V_{\varphi, k} - \dim V_{\varphi, k+1} = \varphi(k) - \varphi(k+1) = c$. Therefore the Grassmannian $\Gr_c(\ker \ubar T_k)$ is a point. Since the quotient of the free action $G_{\psi, \ubar S} \times G_{c[k]} \acts \Ext_{\ubar S, 0}^{\ubar T}$ is represented by $\Ext_{\ubar S, 0}^{\ubar T} \surj \Gr_c(\ker \ubar T_k) = pt$ (Proposition \ref{prop:ext-space}(1)), the action $G_{\psi, \ubar S} \times G_{c[k]} \acts \Ext_{\ubar S, 0}^{\ubar T}$ is simply transitive. Hence there is a unique pair $(g_\psi, g_{c[k]}) \in G_{\psi, \ubar S} \times G_{c[k]}$ so that $(g_\psi, g_{c[k]}) \cdot (g_\varphi \cdot \cE) = \cE$. In other words, $(g_\psi, g_{c[k]}, g_\varphi)$ stabilizes $\cE$, as required.
	
	Alternatively, the claim can be seen by writing $G_{\psi, \ubar S}$ and $G_{\varphi, \ubar T}$ as matrices using the basis \ref{cls:adapted-basis} for $V_\varphi$.
	
	Let $\Lambda_\BR$ and $\Lambda_\BR'$ be infinitesimal characters for the real group $\GL_n(\BC)$ ($n = \varphi(\floor r) = \varphi(\ceil r)$) so that the Langlands parameter space for $\Lambda_\BR$ (resp. $\Lambda_\BR'$) matches $[G_\varphi \backslash \BO_\varphi]$ (resp. $[G_\psi \backslash \BO_\psi]$) as in Corollary \ref{cor:zeta}. Then $\Lambda_\BR$ is more regular than $\Lambda_\BR'$ in the sense that any simple root orthogonal to $\Lambda_\BR$ is orthogonal to $\Lambda_\BR'$. Under local Langlands correspondence, the forgetful functor $\bI_\cE \circ \operatorname{Infl}_1^{G_{c[k]}}$ is adjoint to the translation functor $T_{\Lambda_\BR}^{\Lambda_\BR'}$ from the more regular infinitesimal character $\Lambda_\BR$ to the more singular infinitesimal character $\Lambda_\BR'$, see Proposition \ref{prop:adjoint-of-translation}. This justifies the name ``regular to singular''.
	
	Here is an example
	\begin{equation*}
		\psi = 
		\begin{tikzcd}[start anchor = real center, end anchor = real center, row sep=1ex, column sep=1ex]
			& {\makebox[0pt]{$\ceil r$}} && {\makebox[0pt]{$k$}}\\
			\cdots \ar[r, dash, start anchor=east] & \bullet \ar[r, dash] & \bullet \ar[r, dash] & \bullet \ar[r, dash] & \bullet \ar[r, dash] & \bullet \\
			\cdots \ar[r, dash, start anchor=east] & \bullet \ar[r, dash] & \bullet \\
			\cdots \ar[r, dash, start anchor=east] & \bullet \ar[r, dash] & \bullet	\\
			\cdots \ar[r, dash, start anchor=east] & \bullet \ar[r, dash] & \bullet	\\
			\cdots \ar[r, dash, start anchor=east] & \bullet	
		\end{tikzcd},
		\qquad
		\varphi = 
		\begin{tikzcd}[start anchor = real center, end anchor = real center, row sep=1ex, column sep=1ex]
			& {\makebox[0pt]{$\ceil r$}} && {\makebox[0pt]{$k$}}\\
			\cdots \ar[r, dash, start anchor=east] & \bullet \ar[r, dash] & \bullet \ar[r, dash] & \bullet \ar[r, dash] & \bullet \ar[r, dash] & \bullet \\
			\cdots \ar[r, dash, start anchor=east] & \bullet \ar[r, dash] & \bullet \ar[r, dash] & \circ\\
			\cdots \ar[r, dash, start anchor=east] & \bullet \ar[r, dash] & \bullet	\\
			\cdots \ar[r, dash, start anchor=east] & \bullet \ar[r, dash] & \bullet	\\
			\cdots \ar[r, dash, start anchor=east] & \bullet
		\end{tikzcd}
	\end{equation*}
	where $\varphi = \psi + [k]$. The picture depicts the right half of a multisegment $\bm$ whose weight function is equal to $\varphi$ (resp. $\psi$), and the extra node in $\varphi$ corresponding to $[k]$ is marked with a $\circ$. Under the basis \ref{cls:adapted-basis}, we have
	\begin{equation*}
		\theta_\cE(G_{\psi, \ubar S}^r) = 
		\left\{
		\begin{pmatrix}
			*\\
			* & * & * & *\\
			* & * & * & *\\
			* & * & * & *\\
			* & * & * & * & *\\
		\end{pmatrix}
		\in \GL_5(\BC) \right\} \supset
		G_{\varphi, \ubar T}^r = 
		\left\{
		\begin{pmatrix}
			*\\
			* & * & \\
			* & * & * & *\\
			* & * & * & *\\
			* & * & * & * & *\\
		\end{pmatrix}
		\in \GL_5(\BC) \right\}.
	\end{equation*}
\end{clause}

\begin{clause}[The ``singular to regular'' case]
	Suppose $\varphi(k-1) = \varphi(k)$, or equivalently $c = \psi(k-1) - \psi(k)$. We claim that $\theta_\cE(G_{\psi, \ubar S})$ is contained in $G_{\varphi, \ubar T}$, and we have $S_{\cE, \psi} = G_{\psi, \ubar S}$. As a result, the functor $\bI_\cE \circ \operatorname{Infl}_1^{G_{c[k]}}$ is isomorphic to the averaging functor from $G_{\psi, \ubar S}$-equivariant sheaves to $G_{\varphi, \ubar T}$-equivariant sheaves.
	
	To see the claim, it suffices to show that any element $g_\psi \in G_{\psi, \ubar S}$ can be extended to a unique triple $(g_\psi, g_{c[k]}, g_\varphi)$ stabilizing the extension $\cE= (\iota, \eps)$. Consider the following commutative diagram consisting of solid arrows
	\begin{equation*}
		\begin{tikzcd}[cramped]
			& 
			&& V_{\varphi,i+1} \ar[rr, "\eps_{i+1}"]  \ar[from=dd, "\ubar T", near end]
			&& V_{\psi, i+1}\\
			&& V_{\varphi,i+1} \ar[rr, "\eps_{i+1}", near end, crossing over] \ar[ur, dashed, "g_{\varphi,i+1}"]
			&& V_{\psi,i+1} \ar[ur, "g_{\psi, i+1}"]\\
			& 
			&& V_{\varphi,i} \ar[rr, "\eps_i", near start]  
			&& V_{\psi,i} \ar[uu, "\ubar S"]\\
			&& V_{\varphi,i} \ar[rr, "\eps_i"] \ar[ur, dashed, "g_{\varphi,i}"] \ar[uu, crossing over, "\ubar T", near end]
			&& V_{\psi,i} \ar[ur, "g_{\psi,i}"] \ar[uu, crossing over, "\ubar S", near end]
		\end{tikzcd}.
	\end{equation*}
	Suppose by induction that $g_{\varphi,i}$ is uniquely determined by $g_\psi$. If $i+1 \neq k$, then $\eps_{i+1}$ is an isomorphism, and the top square uniquely determines an element $g_{\varphi,i+1} \in \GL(V_{\varphi,i+1})$. A straightforward diagram chasing shows that the left square commutes, i.e. $g_{\varphi,i}$ and $g_{\varphi,i+1}$ commutes with $\ubar T$. If $i+1 = k$, then our assumption $\varphi(k-1) = \varphi(k)$ and that $\ubar T$ has full rank ensures $\ubar T_{k-1}: V_{\varphi,k-1} \to V_{\varphi,k}$ is an isomorphism, and the left square uniquely determines an element $g_{\varphi,k} \in \GL(V_{\varphi,k})$, namely $g_{\varphi,k} = \ubar T_{k-1} \circ g_{\varphi,k-1} \circ \ubar T_{k-1}\inv$. Then the top square commutes by a straightforward diagram chasing. Hence there is a unique element $g_\varphi = (g_{\varphi,i})_i$ that commutes with $\ubar T$ and intertwines $\eps$ with $g_\psi$. As in the proof of Proposition \ref{prop:SE}(2), given $g_\psi$ and $g_\varphi$ there is a unique $g_{c[k]} \in G_{c[k]}$ so that $(g_\psi, g_{c[k]}, g_\varphi)$ stabilizes $\cE$, as required.
	
	Alternatively, the claim can be seen by writing $G_{\psi, \ubar S}$ and $G_{\varphi, \ubar T}$ as matrices using the basis \ref{cls:adapted-basis}.
	
	If we choose infinitesimal characters $\Lambda_\BR$ and $\Lambda_\BR'$ matching with $\varphi$ and $\psi$ respectively as in Corollary \ref{cor:zeta}, then $\Lambda_\BR$ is more singular than $\Lambda_\BR'$ in the sense that any simple root orthogonal to $\Lambda_\BR'$ will be orthogonal to $\Lambda_\BR$. Under local Langlands correspondence, the averaging functor $\bI_\cE \circ \operatorname{Infl}_1^{G_{c[k]}}$ is adjoint to the translation functor $T_{\Lambda_\BR}^{\Lambda_\BR'}$ by Proposition \ref{prop:adjoint-of-translation}, whence the name ``singular to regular''.
	
	Here is an example
	\begin{equation*}
		\psi = 
		\begin{tikzcd}[start anchor = real center, end anchor = real center, row sep=1ex, column sep=1ex]
			& {\makebox[0pt]{$\ceil r$}} &&& {\makebox[0pt]{$k$}}\\
			\cdots \ar[r, dash, start anchor=east] & \bullet \ar[r, dash] & \bullet \ar[r, dash] & \bullet \ar[r, dash] & \bullet \ar[r, dash] & \bullet \\
			\cdots \ar[r, dash, start anchor=east] & \bullet \ar[r, dash] & \bullet \ar[r, dash] & \bullet \ar[r, dash] & \bullet\\
			\cdots \ar[r, dash, start anchor=east] & \bullet \ar[r, dash] & \bullet \ar[r, dash] & \bullet \ar[r, dash] & \bullet\\
			\cdots \ar[r, dash, start anchor=east] & \bullet \ar[r, dash] & \bullet \ar[r, dash] & \bullet\\
			\cdots \ar[r, dash, start anchor=east] & \bullet 	
		\end{tikzcd},
		\qquad
		\varphi = 
		\begin{tikzcd}[start anchor = real center, end anchor = real center, row sep=1ex, column sep=1ex]
			& {\makebox[0pt]{$\ceil r$}} &&& {\makebox[0pt]{$k$}}\\
			\cdots \ar[r, dash, start anchor=east] & \bullet \ar[r, dash] & \bullet \ar[r, dash] & \bullet \ar[r, dash] & \bullet \ar[r, dash] & \bullet \\
			\cdots \ar[r, dash, start anchor=east] & \bullet \ar[r, dash] & \bullet \ar[r, dash] & \bullet \ar[r, dash] & \bullet\\
			\cdots \ar[r, dash, start anchor=east] & \bullet \ar[r, dash] & \bullet \ar[r, dash] & \bullet \ar[r, dash] & \bullet\\
			\cdots \ar[r, dash, start anchor=east] & \bullet \ar[r, dash] & \bullet \ar[r, dash] & \bullet \ar[r, dash] & \circ\\
			\cdots \ar[r, dash, start anchor=east] & \bullet 	
		\end{tikzcd}
	\end{equation*}
	where $\varphi = \psi + [k]$. Under the basis \ref{cls:adapted-basis},
	\begin{equation*}
		\theta_\cE(G_{\psi, \ubar S}^r) = 
		\left\{
		\begin{pmatrix}
			*\\
			* & * & * & \\
			* & * & * & \\
			* & * & * & *\\
			* & * & * & * & *\\
		\end{pmatrix}
		\in \GL_5(\BC) \right\} \subset
		G_{\varphi, \ubar T}^r = 
		\left\{
		\begin{pmatrix}
			*\\
			* & * & * & *\\
			* & * & * & *\\
			* & * & * & *\\
			* & * & * & * & *\\
		\end{pmatrix}
		\in \GL_5(\BC) \right\}.
	\end{equation*}
\end{clause}

\begin{clause}[The general case]\label{cls:general_ex}
	In general $\theta_\cE(G_{\psi, \ubar S}^r)$ and $G_{\varphi, \ubar T}^r$ do not contain each other. However, since $\cE$ intertwines $\ubar S$ and $\ubar T$, the parabolic subgroups $G_{\varphi, \ubar T}^r$ and $\theta_\cE(G_{\psi, \ubar S}^r)$ contain a common Borel. Hence the intersection $S_{\cE, \varphi} = \theta_\cE(S_{\cE, \psi}) = \theta_\cE(G_{\psi, \ubar S}) \cap G_{\varphi, \ubar T}$ is also a parabolic subgroup.
	
	In this case, we may choose dominant integral infinitesimal characters $\Lambda_\BR$ and $\Lambda_\BR'$ satisfying (\ref{eqn:lambdaL-and-lambdaL'-a})-(\ref{eqn:lambdaL-and-lambdaL'-c}) that match $\varphi$ and $\psi$, respectively. With such $\Lambda_\BR$ and $\Lambda_\BR'$, the functor $\bI_\cE \circ \operatorname{Infl}_1^{G_{c[k]}}$ is adjoint to the translation functor $T_{\Lambda_\BR}^{\Lambda_\BR'}$ by Proposition \ref{prop:adjoint-of-translation-mixed}. For example, we may take $\Lambda_\BR$ in the following way. Let $\bm = \sum_{i=1}^n [a_i,b_i]$ be the multisegment corresponding to the open $G_\varphi$-orbit in $E_\varphi$, and order the segments $[a_i,b_i]$ in $\bm$ so that those with larger endpoints come first, i.e. $b_i \ge b_j \implies i < j$. Then we may take 
	\begin{equation*}
		\Lambda_\BR = (\lambda_L, \lambda_R) = (b_1, b_2, \ldots, b_n, a_n, \ldots, a_2,a_1).
	\end{equation*}
	We may define $\Lambda_\BR'$ similarly using $\psi$ instead of $\varphi$. More generally, for any numbers $e_L$, $e_R$ with $e_L-e_R \in \BZ$, the elements
	\begin{equation*}
		\Lambda_\BR = (b_1 + e_L, b_2+ e_L, \ldots, b_n + e_L, a_n + e_R, \ldots, a_2 + e_R,a_1 + e_R)
	\end{equation*}
	and similarly defined $\Lambda_\BR'$ work equally well. For $e_L = \frac12$ and $e_R = -\frac12$, we recover the cases discussed in \S \ref{subsec:alg-vs-geom}.
	
	Here is an example
	\begin{equation*}
		\psi = 
		\begin{tikzcd}[start anchor = real center, end anchor = real center, row sep=1ex, column sep=1ex]
			& {\makebox[0pt]{$\ceil r$}} &&& {\makebox[0pt]{$k$}}\\
			\cdots \ar[r, dash, start anchor=east] & \bullet \ar[r, dash] & \bullet \ar[r, dash] & \bullet \ar[r, dash] & \bullet \ar[r, dash] & \bullet \\
			\cdots \ar[r, dash, start anchor=east] & \bullet \ar[r, dash] & \bullet \ar[r, dash] & \bullet \ar[r, dash] & \bullet\\
			\cdots \ar[r, dash, start anchor=east] & \bullet \ar[r, dash] & \bullet \ar[r, dash] & \bullet \\
			\cdots \ar[r, dash, start anchor=east] & \bullet \ar[r, dash] & \bullet \ar[r, dash] & \bullet\\
			\cdots \ar[r, dash, start anchor=east] & \bullet  \ar[r, dash] & \bullet \ar[r, dash] & \bullet	
		\end{tikzcd},
		\quad
		\varphi = 
		\begin{tikzcd}[start anchor = real center, end anchor = real center, row sep=1ex, column sep=1ex]
			& {\makebox[0pt]{$\ceil r$}} &&& {\makebox[0pt]{$k$}}\\
			\cdots \ar[r, dash, start anchor=east] & \bullet \ar[r, dash] & \bullet \ar[r, dash] & \bullet \ar[r, dash] & \bullet \ar[r, dash] & \bullet \\
			\cdots \ar[r, dash, start anchor=east] & \bullet \ar[r, dash] & \bullet \ar[r, dash] & \bullet \ar[r, dash] & \bullet\\
			\cdots \ar[r, dash, start anchor=east] & \bullet \ar[r, dash] & \bullet \ar[r, dash] & \bullet \ar[r, dash] & \circ\\
			\cdots \ar[r, dash, start anchor=east] & \bullet \ar[r, dash] & \bullet \ar[r, dash] & \bullet \ar[r, dash] & \circ\\
			\cdots \ar[r, dash, start anchor=east] & \bullet \ar[r, dash] & \bullet \ar[r, dash] & \bullet
		\end{tikzcd}
	\end{equation*}
	where $\varphi = \psi + 2[k]$. The infinitesimal characters $\Lambda_\BR$ and $\Lambda_\BR'$ chosen in the above fashion will have $\lambda_R = \lambda_R'$ and
	\begin{equation*}
		\lambda_L = (k+1,k, k,k,k-1),\quad
		\lambda_L' = (k+1,k, k-1,k-1,k-1).
	\end{equation*}
	It is easy to verify that they satisfy the conditions (\ref{eqn:lambdaL-and-lambdaL'-a})-(\ref{eqn:lambdaL-and-lambdaL'-c}) for $j = 3$, $c=2$. Under the basis \ref{cls:adapted-basis} for $V_\varphi$,
	\begin{equation*}
		\theta_\cE(G_{\psi, \ubar S}^r) = 
		\left\{
		\begin{pmatrix}
			*\\
			* & * \\
			* & * & * & * & *\\
			* & * & * & * & *\\
			* & * & * & * & *\\
		\end{pmatrix}
		\in \GL_5(\BC) \right\},\quad
		G_{\varphi, \ubar T}^r = 
		\left\{
		\begin{pmatrix}
			*\\
			* & * & * & *\\
			* & * & * & *\\
			* & * & * & *\\
			* & * & * & * & *\\
		\end{pmatrix}
		\in \GL_5(\BC) \right\},
	\end{equation*}
	\begin{equation*}
		S_{\cE,\varphi} = \theta_\cE(S_{\cE,\psi}) = 
		\left\{
		\begin{pmatrix}
			*\\
			* & * \\
			* & * & * & *\\
			* & * & * & *\\
			* & * & * & * & *\\
		\end{pmatrix}
		\in \GL_5(\BC) \right\}.
	\end{equation*}
\end{clause}

\subsection{The comparison theorem}\label{subsec:comparison-thms}

We are now ready to compare the two functors. 

Let us recall our setup. Suppose $\varphi$ and $\psi$ are integral weight functions satisfying the following conditions:
\begin{itemize}
	\item $\varphi$ and $\psi$ satisfy Assumption \ref{assump:r} for the same number $r$.
	\item $k \in \supp \varphi$, $k > \ceil r$; $c \in \BZ_{\ge 1}$.
	\item $\varphi = \psi + c[k]$.
\end{itemize}
These conditions imply in particular that $c \le \varphi(k) - \varphi(k+1)$. Choose elements $\ubar S \in \ubar \BO_\psi$ and $\ubar T \in \ubar \BO_\varphi$. Recall the induction functor
\begin{equation*}
	\Ind_{\ubar S}^{\ubar T} = (p|_{\ubar T})_* \circ (\gamma_r^*)\inv \circ \gamma_l^*: D^b(\BO_\psi|_{\ubar S}, G_{\psi, \ubar S} \times G_{c[k]}) 
	\aro
	D^b(\BO_\varphi|_{\ubar T}, G_{\varphi, \ubar T})
\end{equation*}
defined in \ref{def:symm-ind} as the restriction of Lusztig's induction (see (\ref{diag:base-change}) for the maps $p|_{\ubar T}$, $\gamma_r$, and $\gamma_l$). Recall from Proposition \ref{prop:SE} that for any choice of extension $\cE \in \Ext_{\ubar S, 0}^{\ubar T}$, there are maps of quotient stacks
\begin{equation*}
	[(G_{\psi, \ubar S} \times G_{c[k]}) \backslash (\BO_\psi|_{\ubar S})]
	\xleftarrow{\pi_{\psi,c[k]}} [S_\cE \backslash (\BO_\psi|_{\ubar S})]
	\xrightarrow[\sim]{\tau_\cE} [S_\cE \backslash (\BO_\varphi|_{\ubar T})]
	\xrightarrow{\pi_\varphi} [G_{\varphi,\ubar T} \backslash (\BO_\varphi|_{\ubar T})]
\end{equation*}
and hence a pullback-pushforward functor
\begin{equation*}
	\bI_\cE = \pi_{\varphi*} \circ \pi_{\psi,c[k]}^* : 
	D^b(\BO_\psi|_{\ubar S}, G_{\psi, \ubar S} \times G_{c[k]}) 
	\aro
	D^b(\BO_\varphi|_{\ubar T}, G_{\varphi, \ubar T})		 
\end{equation*}
where $S_\cE \subset G_{\psi, \ubar S} \times G_{c[k]} \times G_{\varphi, \ubar T}$ is the stabilize of $\cE$.

\begin{theorem}\label{thm:compare-functors}
	Suppose we are in the above setup. Then for any extension $\cE\in \Ext_{\ubar S, 0}^{\ubar T}$, there is a natural isomorphism of functors
	\begin{equation*}
		\Ind_{\ubar S}^{\ubar T} \bijects \bI_\cE
	\end{equation*}	
	from $D^b(\BO_\psi|_{\ubar S}, G_{\psi, \ubar S} \times G_{c[k]})$ to $D^b(\BO_\varphi|_{\ubar T}, G_{\varphi, \ubar T})$.
\end{theorem}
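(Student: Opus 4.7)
\medskip
\noindent\textbf{Proof proposal.} The plan is to build a geometric presentation of the correspondence defining $\Ind_{\ubar S,0}^{\ubar T}$ by using the extension space $\Ext_{\ubar S,0}^{\ubar T}$ as a bridge, and then to identify that presentation with the one defining $\bI_\cE$.

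The first step is to use Proposition \ref{prop:ext-space}(3) to rewrite the middle term $\widetilde{\BO_{\varphi,c[k]}}|_{\ubar S,\ubar T}$ in two ways. On the left, I would identify it with $\BO_\psi|_{\ubar S} \times \Ext_{\ubar S,0}^{\ubar T}$ under which $\gamma_l$ is simply the projection onto the first factor; on the right, I would identify it with $\Ext_{\ubar S,0}^{\ubar T} \times \BO_\varphi|_{\ubar T}$ under which $p|_{\ubar T} \circ \gamma_r\inv$ is the projection onto the second factor. The equivariance structures are given by Proposition \ref{prop:ext-space}(4): in the first description $G_{\varphi,\ubar T}$ acts only on the $\Ext$-factor and $G_{\psi,\ubar S} \times G_{c[k]}$ acts diagonally, while in the second description $G_{\psi,\ubar S} \times G_{c[k]}$ acts only on the $\Ext$-factor and $G_{\varphi,\ubar T}$ acts diagonally.

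Next, given the chosen extension $\cE \in \Ext_{\ubar S,0}^{\ubar T}$, I would define a section
\begin{equation*}
	\sigma_\cE : \BO_\psi|_{\ubar S} \aro \widetilde{\BO_{\varphi,c[k]}}|_{\ubar S,\ubar T},\quad S \mapsto (S,\cE)
\end{equation*}
using the first product description. Tracing through the isomorphism of Proposition \ref{prop:ext-space}(3), this is the same as $S \mapsto (\cE,\tau_\cE(S))$ under the second description, so that $\gamma_l \circ \sigma_\cE = \id_{\BO_\psi|_{\ubar S}}$ and $p|_{\ubar T} \circ \gamma_r\inv \circ \sigma_\cE = \tau_\cE$. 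The section $\sigma_\cE$ is equivariant for the inclusion $S_\cE \hookrightarrow G_{\psi,\ubar S} \times G_{c[k]} \times G_{\varphi,\ubar T}$, directly from the definition of $S_\cE$ as the stabilizer of $\cE$. The crucial claim is then that on the level of quotient stacks, $\sigma_\cE$ induces an isomorphism
\begin{equation*}
	[S_\cE \backslash \BO_\psi|_{\ubar S}] \xrightarrow{\;\sim\;} \big[ (G_{\psi,\ubar S} \times G_{c[k]} \times G_{\varphi,\ubar T}) \backslash \widetilde{\BO_{\varphi,c[k]}}|_{\ubar S,\ubar T} \big].
\end{equation*}
This follows from the general fact that if a group $G$ acts on a product $X \times Y$ transitively on $Y$ with stabilizer $H \subseteq G$ at a point $y_0$, then the map $X \to X \times Y$, $x \mapsto (x,y_0)$, is $H$-equivariant and induces an isomorphism $[H \backslash X] \bij [G \backslash (X \times Y)]$; in our setup this is applied with $X = \BO_\psi|_{\ubar S}$, $Y = \Ext_{\ubar S,0}^{\ubar T}$, $y_0 = \cE$, using the transitivity from Proposition \ref{prop:ext-space}(2).

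Combining the previous two steps gives a commutative diagram of quotient stacks in which the map $\gamma_l$ from the middle stack corresponds to $\pi_{\psi,c[k]}$ (because it is the identity on the underlying space $\BO_\psi|_{\ubar S}$ but enlarges the equivariance from $S_\cE$ to $G_{\psi,\ubar S} \times G_{c[k]}$), and the map $p|_{\ubar T} \circ \gamma_r\inv$ corresponds to $\pi_\varphi \circ \tau_\cE$. Translating $\Ind_{\ubar S,0}^{\ubar T} = (p|_{\ubar T})_* \circ (\gamma_r^\dagger)\inv \circ \gamma_l^\dagger$ through the isomorphism $\sigma_\cE$ and using that $\tau_\cE$ is an isomorphism of varieties intertwining the $S_\cE$-actions on $\BO_\psi|_{\ubar S}$ and $\BO_\varphi|_{\ubar T}$ (which is the content of Proposition \ref{prop:SE}(3)), the right hand side collapses to $\pi_{\varphi*} \circ \pi_{\psi,c[k]}^\dagger = \bI_\cE$. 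I expect the main obstacle to be a careful bookkeeping of the cohomological shifts built into $(-)^\dagger$: one needs to verify that the relative dimensions cancel correctly in the three squares obtained from Steps 1 and 3, which amounts to the identity $\dim \widetilde{\BO_{\varphi,c[k]}}|_{\ubar S,\ubar T} - \dim \BO_\psi|_{\ubar S} = \dim \Ext_{\ubar S,0}^{\ubar T}$ and its analogue on the right, together with the observation that the stacky dimensions on the quotient side are shifted by the dimensions of the acting groups and $S_\cE$ in a compatible way. All other verifications are formal consequences of Propositions \ref{prop:ext-space} and \ref{prop:SE}.
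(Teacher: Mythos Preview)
Your proposal is correct and follows essentially the same route as the paper: both arguments rewrite $\widetilde{\BO_{\varphi,c[k]}}|_{\ubar S,\ubar T}$ as $X \times \Ext_{\ubar S,0}^{\ubar T}$ via Proposition~\ref{prop:ext-space}(3), use the transitivity of Proposition~\ref{prop:ext-space}(2) to identify $\Ext_{\ubar S,0}^{\ubar T}$ with $(H_1 \times H_2)/S_\cE$, and deduce the stack isomorphism $[S_\cE \backslash X] \cong [(H_1 \times H_2) \backslash (X \times \Ext_{\ubar S,0}^{\ubar T})]$ under which $\gamma_l$ becomes $\pi_{\psi,c[k]}$ and $p|_{\ubar T} \circ \gamma_r$ becomes $\pi_\varphi \circ \tau_\cE$ (the paper spells this out by an explicit diagram on $H_1 \times H_2 \times X$, you package it as a general induction fact---these are the same computation). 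Two small remarks: where you write ``$p|_{\ubar T} \circ \gamma_r\inv$'' you mean $p|_{\ubar T} \circ \gamma_r$ (it is $\gamma_r^\dagger$, not $\gamma_r$, that is inverted in the definition of $\Ind$); and your worry about shifts dissolves once you note that $(-)^\dagger$ is functorial for compositions, so the identity $\gamma_l \circ \sigma_\cE = \id_X$ forces the shifts to cancel on the nose.
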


\begin{proof}
	To ease notations, let us write
	\begin{itemize}
		\item $H_1 := G_{\psi, \ubar S} \times G_{c[k]}$;
		\item $H_2 := G_{\varphi, \ubar T}$;
		\item $S := S_\cE \subset H_1 \times H_2$, and we identify $S$ with its images in $H_1$ and $H_2$ along the projections;
		\item $X := \BO_\psi|_{\ubar S}$.
	\end{itemize}
	Recall from Proposition \ref{prop:ext-space}(2) that the action of $G_{\psi, \ubar S} \times G_{c[k]} \times G_{\varphi, \ubar T}$ on $\Ext_{\ubar S, 0}^{\ubar T}$ is transitive. Hence $\Ext_{\ubar S, 0}^{\ubar T} = (H_1 \times H_2)/S$. 
	
	Let us first examine the map $\gamma_l$ in the definition of $\Ind_{\ubar S}^{\ubar T}$. By \ref{prop:ext-space}(2), it factors as
	\begin{equation*}
		\BO_\psi|_{\ubar S} \lsurjects \BO_\psi|_{\ubar S} \times \Ext_{\ubar S, 0}^{\ubar T} \lbijects \widetilde{\BO_{\varphi, c[k]}}|_{\ubar S, \ubar T}.
	\end{equation*}
	In the new notations, this is the projection map to $X$:
	\begin{equation*}
		X \lsurjects X \times (H_1 \times H_2)/S.
	\end{equation*}
	It doesn't hurt to swap $X$ and $(H_1 \times H_2)/S$. So we are looking at the projection
	\begin{equation*}
		X \lsurjects (H_1 \times H_2)/S \times X : q_X
	\end{equation*}
	where the $X$ in the target is equipped with the left action of $H_1$, and $(H_1 \times H_2)/S \times X$ is equipped with the diagonal action of $H_1$ on $H_1$ by multiplication and on $X$, and the left action of $H_2$ on $H_2$ by multiplication. Now this projection map fits into the following diagram
	\begin{equation*}
		\begin{tikzcd}
			&&& H_1 \times H_2 \times X 
			\ar[start anchor = 160, end anchor = 170, out=100, in=150, distance={3ex}, "{}_l H_1"']
			\ar[start anchor = 80, end anchor = 120, out=90, in=135, distance={3ex}, "{}_l H_2"']
			\ar[start anchor = 0, end anchor = -20, out=-45, in=-60, distance={3ex}, "\Delta_{123} S"]
			\\[3ex]
			X \ar[start anchor = 120, end anchor = 150, out=100, in=170, distance={3ex}, "H_1"']
			& X \ar[l, "\id"'] \ar[start anchor = 120, end anchor = 150, out=100, in=170, distance={3ex}, "S"']
			& H_1 \times X \ar[l, "pr_X"'] \ar[d, "\text{quotient by }S"'] 
			\ar[start anchor = 140, end anchor = 150, out=100, in=170, distance={3ex}, "\Delta H_1"']
			\ar[start anchor = 100, end anchor = 120, out=45, in=90, distance={3ex}, "{}_r S"']
			& H_1 \times H_2 \times X \ar[l, "pr_{13}"'] \ar[d, "\text{quotient by }S"'] 
			\ar[start anchor = 160, end anchor = 170, out=100, in=170, distance={3ex}, "\Delta_{13} H_1"']
			\ar[start anchor = 80, end anchor = 120, out=90, in=135, distance={3ex}, "{}_l H_2"']
			\ar[start anchor = -30, end anchor = -60, out=-45, in=-80, distance={3ex}, "\Delta_{12,r} S"]
			\ar[u, xshift={2ex}, "\cong"']
			\\
			&& (H_1/S) \times X
			\ar[start anchor = -140, end anchor = -150, out=-100, in=-170, distance={3ex}, "\Delta H_1"] \ar[ul, bend left, "pr_X"]
			& (H_1 \times H_2)/S \times X \ar[l, "pr_{13}"']
			\ar[start anchor = -160, end anchor = -170, out=-100, in=-170, distance={3ex}, "\Delta_{13} H_1"]
			\ar[start anchor = -80, end anchor = -120, out=-90, in=-135, distance={3ex}, "{}_l H_2"]
		\end{tikzcd}
	\end{equation*}
	where the maps are given by
	\begin{equation*}
		\begin{tikzcd}
			&&& (h_1, h_2, h_1\inv \cdot x)
			\\
			x 
			& x \ar[l, mapsto]
			& (h_1,x) \ar[l, mapsto] \ar[d, mapsto] 
			& (h_1, h_2, x) \ar[d, mapsto] \ar[u, mapsto] \ar[l, mapsto]
			\\
			&& {[h_1,x]}
			& {[h_1,h_2,x]} \ar[l, mapsto]
		\end{tikzcd}
	\end{equation*}
	These maps are equivariant with respect to the actions marked in the diagram. Here $\Delta$, $\Delta_{13}$, $\Delta_{12}$, and $\Delta_{123}$ indicate diagonal actions on the factors given by the subscripts, and the left subscripts ``$l$'' and ``$r$'' denote the left and right multiplication actions, respectively. All the group actions on each object commute with each other. Also, all the maps except the leftmost map $X \leftarrow X$ are isomorphic to one of the form $Z \surj Z/G$ where $Z$ is a variety with a free $G$ action. Therefore, after passing to quotient stacks, the diagram becomes
	\begin{equation*}
		\begin{tikzcd}[column sep=small]
			&&& {[ ({}_l H_1 \times {}_l H_2) \backslash (H_1 \times H_2 \times X)/ \Delta_{123} S]}
			\\
			{[H_1 \backslash X]} 
			& {[S \backslash X]} \ar[l, "\pi_1"']
			& {[\Delta H_1 \backslash (H_1 \times X) / S]} \ar[l, "\cong"'] \ar[d, "\cong"'] 
			& {[(\Delta_{13} H_1 \times {}_l H_2) \backslash (H_1 \times H_2 \times X) / \Delta_{12} S]} \ar[l, "\cong"'] \ar[d, "\cong"'] \ar[u, "\cong"]
			\\
			&& {[\Delta H_1 \backslash (H_1/S) \times X]}
			& {[( \Delta_{13} H_1 \times {}_l H_2) \backslash (H_1 \times H_2)/S \times X]} \ar[l, "\cong"']
		\end{tikzcd}
	\end{equation*}
	As a result, the pullback functor $\gamma_l^*$, or equivalently $q_X^*$, is isomorphic to
	\begin{equation*}
		\pi_1^*: 
		D^b(X, H_1) \aro 
		D^b(X,S)
		\cong D^b( (H_1 \times H_2)/S \times X, \Delta_{13} H_1 \times {}_l H_2).
	\end{equation*}
	Note that $\pi_1$ is the same as the map $\pi_{\psi,c[k]}$ in the definition of $\bI_\cE$. Hence we see that
	\begin{equation*}
		\gamma_l^* \cong \pi_{\psi,c[k]}^*.
	\end{equation*}
	
	A similar argument applies to the map $p|_{\ubar T} \circ \gamma_r$. By Proposition \ref{prop:ext-space}(3), it can be rewritten as the composition
	\begin{equation*}
		\widetilde{\BO_{\varphi,c[k]}}|_{\ubar S, \ubar T} \bijects
		\Ext_{\ubar S, 0}^{\ubar T} \times \BO_\varphi|_{\ubar T} \surjects
		\Gr_c(\ker \ubar T_k) \times \BO_\varphi|_{\ubar T} 
		\surjects \BO_\varphi|_{\ubar T}.
	\end{equation*}
	Note that by means of the isomorphism $\tau_\cE$ of Proposition \ref{prop:SE}(1), the space $X = \BO_\psi|_{\ubar S}$ can be identified with $\BO_\varphi|_{\ubar T}$. Hence in the new notations, the above map is the same as
	\begin{equation*}
		(H_1 \times H_2)/S \times X \xrightarrow{q_{H_1} = \text{quotient by } H_1} (H_2/S) \times X \xtwoheadrightarrow{pr_X} X.
	\end{equation*}
	Here $(H_1 \times H_2)/S \times X$ is equipped with the left multiplication of $H_1$ on $H_1$ and the diagonal action of $H_2$ on $H_2$ by left multiplication and on $X$ on the left, $(H_2/S) \times X$ is equipped with the diagonal $H_2$-action, and the last $X$ is equipped with the left $H_2$-action. By the same argument as above (swapping the roles of $H_1$ and $H_2$), after passing to quotient stacks, these maps become
	\begin{equation*}
		[ ({}_l H_1 \times {}_l H_2) \backslash (H_1 \times H_2 \times X)/ \Delta_{123} S]
		\cong [S \backslash X] \xrightarrow{\pi_2} [ H_2 \backslash X].
	\end{equation*}
	Hence we see that $(p|_{\ubar T})_* \circ (\gamma_r^*)\inv$, or equivalently $pr_{X*} \circ (q_{H_1}^*)\inv$ in the new notations, is isomorphic to $\pi_{2*} = \pi_{\varphi*}$. Therefore
	\begin{equation*}
		(p|_{\ubar T})_* \circ (\gamma_r^*)\inv  \cong \pi_{\varphi*}.
	\end{equation*}
	As a result,
	\begin{equation*}
		\Ind_{\ubar S}^{\ubar T} = (p|_{\ubar T})_* \circ (\gamma_r^*)\inv \circ \gamma_l^*
		\cong 
		\pi_{\varphi*} \circ \pi_{\psi,c[k]}^* = \bI_\cE
	\end{equation*}
	as desired.
\end{proof}

Let us put everything together. Fix a choice of $\cE = (\iota,\eps) \in \Ext_{\ubar S,0}^{\ubar T}$. Choose a basis for $V_{\varphi, \floor r}$ and for $V_{\varphi, \ceil r}$. We get a basis for $V_{\psi, \floor r}$ and $V_{\psi, \ceil r}$ via the isomorphisms $\eps_{\floor r}$ and $\eps_{\ceil r}$. Write $n = \varphi( \floor r)$. As in Corollary \ref{cor:zeta} we have maps
\begin{gather*}
	\zeta: [\Delta \GL_n \backslash (\cP_{\lambda_L} \times \cP_{\lambda_R})] \cong [P_{\varphi,r} \backslash \GL_n / P_{\varphi, l}] \cong [G_{\varphi, \ubar T} \backslash \BO_\varphi|_{\ubar T}] \cong [G_\varphi \backslash \BO_\varphi] \injects [G_\varphi \backslash E_\varphi]
	\\
	\zeta': [\Delta \GL_n \backslash (\cP_{\lambda_L'} \times \cP_{\lambda_R'})] \cong [P_{\psi,r} \backslash \GL_n / P_{\psi, l}] \cong [G_{\psi, \ubar S} \backslash \BO_\psi|_{\ubar S}] \cong [G_\psi \backslash \BO_\psi] \injects [G_\psi \backslash E_\psi]
\end{gather*}
As demonstrated in \ref{cls:general_ex}, the parabolics $P_{\varphi, r}$ and $P_{\psi, r}$ contain a common Borel, and $P_{\varphi, r} = P_{\psi, r}$. Hence the intersections
\begin{equation*}
	Q_l := P_{\varphi, l} = P_{\psi, l} \text{ and } Q_r := P_{\varphi, r} \cap P_{\psi, r}
\end{equation*}
are parabolic subgroups of $\GL_n$, and the functor $\bI_\cE \circ \operatorname{Infl}_1^{G_{c[k]}}$ is the same as the pullback-pushforward functor
\begin{equation*}
	\bI_\psi^\varphi: D^b(\GL_n, P_{\psi, r} \times P_{\psi,l}) \aro D^b(\GL_n, P_{\varphi, r} \times P_{\varphi,l})
\end{equation*}
along the projections
\begin{equation*}
	[P_{\psi, r} \backslash \GL_n / P_{\psi,l}] \longleftarrow [Q_r \backslash \GL_n / Q_l] \longrightarrow [P_{\varphi, r} \backslash \GL_n / P_{\varphi,l}].
\end{equation*}

We then have the following diagram
\begin{equation*}
	{
	\begin{tikzcd}[row sep=large, column sep=small, cramped]
		D^b(E_\varphi, G_\varphi) \ar[r, "\text{rest.}"] 
		& D^b(\BO_\varphi, G_\varphi) \ar[r, dash, "\cong"]
		& D^b(\BO_\varphi|_{\ubar T}, G_{\varphi, \ubar T}) \ar[r, dash, "\cong"]
		& D^b(\GL_n, P_{\varphi,r} \times P_{\varphi,l})
		\\
		D^b(E_\psi, G_\psi \times G_{c[k]}) \ar[r, "\text{rest.}"] \ar[u, "\LInd_{\psi, c[k]}^\varphi"']
		& D^b(\BO_\psi, G_\psi \times G_{c[k]}) \ar[r, dash, "\cong"] \ar[u, phantom, "\sharp" description]
		& D^b(\BO_\psi|_{\ubar S}, G_{\psi, \ubar S} \times G_{c[k]}) \ar[r, phantom, "\natural" description] \ar[u, bend left, "\Ind_{\ubar S}^{\ubar T}"] \ar[u, bend right, "\bI_\cE"'] \ar[u, phantom, "\star" description]
		& D^b(\GL_n, Q_r \times Q_l) \ar[u, "\text{pushforward}"]
		\\
		D^b(E_\psi, G_\psi) \ar[r, "\text{rest.}"] \ar[u, "\operatorname{Infl}_1^{G_{c[k]}}"'] \ar[uu, out=150, in=-150, distance=2cm, "\LInd_\psi^\varphi"]
		& D^b(\BO_\psi, G_\psi) \ar[r, dash, "\cong"] \ar[u, "\operatorname{Infl}_1^{G_{c[k]}}"]
		& D^b(\BO_\psi|_{\ubar S}, G_{\psi, \ubar S}) \ar[r, dash, "\cong"] \ar[u, "\operatorname{Infl}_1^{G_{c[k]}}"]
		& D^b(\GL_n, P_{\psi,r} \times P_{\psi,l}) \ar[u, "\text{pullback}"] \ar[uu, out=30, in=-30, distance=2cm, "\bI_\psi^\varphi"']
	\end{tikzcd}
	}
\end{equation*}
Here rest. denotes the restriction functor, and $\LInd_\psi^\varphi$ is defined to be the composition $\LInd_{\psi, c[k]}^\varphi \circ \operatorname{Infl}_1^{G_{c[k]}}$. The composition of the top and bottom rows are the pullback functors $\zeta^*$ and $(\zeta')^*$, respectively. The rectangle $\natural$ commutes by the above discussion; the rectangle $\sharp$ commutes by Lemma \ref{lem:Lind-on-fiber}; the two maps around the oval $\star$ agree by Theorem \ref{thm:compare-functors} above, and the first two squares in the bottom row clearly commutes. As a result, the whole diagram commutes.

As explained in \ref{cls:general_ex}, we may choose $\Lambda_\BR$ and $\Lambda_\BR'$ that satisfy (\ref{eqn:lambdaL-and-lambdaL'-a})-(\ref{eqn:lambdaL-and-lambdaL'-c}). Then $\bI_\psi^\varphi$ is the same as the functor $\bI_{\Lambda_\BR'}^{\Lambda_\BR}$ defined prior to Proposition \ref{prop:adjoint-of-translation-mixed}. Consequently, we have the following corollary.

\begin{corollary}\label{cor:compare-functors}
	Under the above setup, the following diagram commutes
	\begin{equation*}
		\begin{tikzcd}
			D^b(E_\varphi, G_\varphi) \ar[r, "\zeta^*"]
			& D^b(\GL_n, P_{\varphi,r} \times P_{\varphi,l}) \ar[r, dash, "\cong"]
			& D^b(\cP_{\lambda_L} \times \cP_{\lambda_R}, \Delta \GL_n)
			\\
			D^b(E_\psi, G_\psi) \ar[r, "(\zeta')^*"] \ar[u, "\LInd_{\psi}^\varphi"]
			&  D^b(\GL_n, P_{\psi,r} \times P_{\psi,l}) \ar[u, "\bI_\psi^\varphi"'] \ar[r, dash, "\cong"]
			& D^b(\cP_{\lambda_L'} \times \cP_{\lambda_R'}, \Delta \GL_n) \ar[u, "\bI_{\Lambda_\BR'}^{\Lambda_\BR}"']
		\end{tikzcd}.
	\end{equation*}
	Taking adjoint under the perfect pairings (\ref{eqn:LLC-GLn-pairing}) and (\ref{eqn:LLC-Qp-pairing}), we obtain a commutative diagram
	\begin{equation*}
		\begin{tikzcd}
			K\Rep_{\Lambda}(\GL_m(\BQ_p)) \ar[d, "{}^kD"']
			& K\Rep_{\Lambda_\BR}(\GL_n(\BC)) \ar[l]  \ar[d, "T_{\Lambda_\BR}^{\Lambda_\BR'}"]
			\\
			K\Rep_{\Lambda'}(\GL_{m-c}(\BQ_p)) 
			& K\Rep_{\Lambda_\BR'}(\GL_n(\BC)) \ar[l]
		\end{tikzcd}.
	\end{equation*}
	where $\Lambda$, $\Lambda'$ are infinitesimal characters corresponding to $\varphi$, $\psi$, respectively, and ${}^kD$ is ${}^k\sD$ postcomposed with the projection onto the summand $K\Rep_{\Lambda'}(\GL_{m-c}(\BQ_p))$ of $\cR$.
\end{corollary}

\subsection{The left case}\label{subsec:left-to-right}

At the end of this section, we explain how to reduce the ``left case'' $k < \floor r$ to the ``right case'' $k > \ceil r$.

Suppose we have weight functions $\psi_a: \BZ \to \BN$ so that $\psi_0 = \psi_1 + \psi_2$.
\begin{itemize}
	\item Set $\psi_a^*(i) := \psi_a(-i)$;
	\item Identify $V_{\psi_a^*}$ with $(V_{\psi_a})^*$, so that $V_{\varphi^*,-i} = (V_{\varphi,i})^*$ has degree $-i$; 
	
	\item For $T \in \End_\BC(V_{\psi_a})$, write $T^* \in \End_\BC(V_{\psi_a^*})$ for its adjoint. Then $T$ is homogeneous of degree 1 if and only if $T^*$ is so, that is, $T \in E_{\psi_a}$ if and only if $T^* \in E_{\psi_a^*}$;
	
	\item Equip $V_{\psi_a^*}$ and $E_{\psi_a^*}$ with the induced actions of $G_{\psi_a}$; 
%
\end{itemize}
Then for $\{a,b\} = \{1,2\}$, if $\big( V_{\psi_a} \to V_{\psi_0} \to V_{\psi_b}, T\big)$ is an element in $\widetilde{E_{\psi_0,\psi_a}}$ (see \ref{cls:Lind} for the notation), $\big(V_{\psi_b^*} \to V_{\psi_0^*} \to V_{\psi_a^*}, T^*\big)$ is an element of $\widetilde{E_{\psi_0^*,\psi_b^*}}$. Similarly, if $\big(W \subset V_{\psi_0}, T\big)$ is an element of $E_{\psi_0,\psi_a}$, $\big( (V_{\psi_0}/W)^* \subset V_{\psi_0}^* = V_{\psi_0^*}, T^*\big)$ is an element of $E_{\psi_0^*,\psi_b^*}$.

Suppose $k < \floor r$, $c \in \BZ_{>0}$ and $\varphi = c[k] + \psi$ so that both $\varphi$ and $\psi$ satisfy Assumption \ref{assump:r} (in particular $c \le \varphi(k) - \varphi(k-1)$). Then the correspondence diagram for Lusztig induction is
\begin{equation*}
	\begin{tikzcd}
		G_{c[k]} \times G_\psi \ar[d, phantom, "\racts" {rot90, description}] 
		& G_{c[k]} \times G_\psi \times G_\varphi \ar[d, phantom, "\racts" {rot90, description}] \ar[l, two heads] \ar[r, two heads]
		& G_\varphi \ar[d, phantom, "\racts" {rot90, description}] \ar[r, equal]
		& G_\varphi \ar[d, phantom, "\racts" {rot90, description}]\\
		E_{c[k]} \times E_\psi 
		& \widetilde{E_{\varphi,\psi}} \ar[l, two heads, "\beta"'] \ar[r, two heads, "\tilde p"]
		& E_{\varphi,\psi} \ar[r, two heads, "p"]
		& E_\varphi
	\end{tikzcd}.
\end{equation*}
Applying $\BC$-linear dual to the objects involved, we obtain isomorphisms
\begin{align*}
	E_\varphi &\bijects E_{\varphi^*}, & T &\mapsto T^*,\\
	E_\psi &\bijects E_{\psi^*}, & S &\mapsto S^*,\\
	E_{\varphi,\psi} &\bijects E_{\varphi^*, c[k]^*}, & 
	\big(W \subset V_\varphi, T\big) &\mapsto \big((V_\varphi/W)^* \subset V_{\varphi^*}, T^*\big)\\
	\widetilde{E_{\varphi,\psi}} &\bijects \widetilde{E_{\varphi^*, c[k]^*}}, & 
	\big(V_\psi \to V_\varphi \to V_{c[k]}, T\big) &\mapsto \big(V_{c[k]^*} \to V_{\varphi^*} \to V_{\psi^*}, T^*\big)
\end{align*}
These isomorphisms evidently preserve the full rank parts and are equivariant for the various group actions. They form a commutative diagram
\begin{equation*}
	\begin{tikzcd}
		E_{c[k]} \ar[r, phantom, "\times" description] \ar[dr, "\sim"' {near start}] 
		&[-3ex] E_\psi \ar[dl, crossing over, "\sim" {near start}]
		& \widetilde{E_{\varphi,\psi}} \ar[l, two heads] \ar[d, "\sim"] \ar[r, two heads]
		& E_{\varphi,\psi} \ar[d, "\sim"] \ar[r, two heads]
		& E_\varphi \ar[d, "\sim"]\\
		E_{\psi^*} \ar[r, phantom, "\times" description]
		& E_{c[k]^*} 
		& \widetilde{E_{\varphi^*,c[k]^*}} \ar[l, two heads] \ar[r, two heads]
		& E_{\varphi^*,c[k]^*} \ar[r, two heads]
		& E_{\varphi^*}
	\end{tikzcd}
\end{equation*}
where all maps are equivariant with respect to appropriate groups. As a result, Lusztig induction with respect to the top row is isomorphic to the induction with respect to the bottom row via the vertical isomorphisms. 

Finally, we have $\varphi^* = (c[k])^* + \psi^* =\psi^* + c[-k]$, both $\varphi^*$ and $\psi^*$ satisfy Assumption \ref{assump:r} at position $-r$, and we have $-k > -r$. So $\varphi^*$ and $\psi^*$ belong to the situation studied in previous subsections.

\appendix
\section{Translation functors and push-pull functors}\label{sec:trans-pushpull}

Since this appendix only concerns real groups, to ease notation we will use the lowercase letter $\lambda$ to denote a real infinitesimal character, rather than $\Lambda_\BR$. We also drop the subscript $(-)_\BC$ for the complex algebraic objects. For example, we write $G$, $\fg$, and $\fh$ for $G_\BC$, $\fg_\BC$, and $\fh_\BC$.

\subsection{Push-pull functors}\label{subsec:trans_vs_pushpull}

The aim of this subsection is to prove Proposition \ref{prop:adjoint-of-translation} (Proposition \ref{prop:sing_to_sing} below). This should be well-known to experts, but we could not find a proof in the literature.

\subsubsection{The setup}

Since Recall from \ref{cls:LLC/R} the local Langlands correspondence at a dominant regular infinitesimal character $\lambda$ is a perfect pairing
\begin{equation}\label{eqn:LLC/R}
	\langle -,- \rangle_\lambda: \bigoplus_{x \in H^1(\Gamma, \check G_\BC)} K \Rep_{\lambda}(\check  G_{\BR,x}) \times 
	K \Perv(\cY(\lambda), G_{\lambda})
	\aro \BZ,
\end{equation}
where $\cY({\lambda})$ is the product of the partial flag variety $\cP_{\lambda}$ with the set $\cI(\lambda) := \big\{ y \in G^\Gamma- G \mid y^2 = \exp(2\pi i {\lambda}) \big\}$, usually called the set of \textit{strong involutions} attached to $\lambda$. The set $\cI(\lambda)$ is the disjoint union of finitely many closed $G_{\lambda}$-orbits, and hence we have a decomposition
\begin{equation*}
	[G_{\lambda} \backslash \cY(\lambda)] = \bigsqcup_i [K_{\lambda,y_i} \backslash \cP_{\lambda}]
\end{equation*}
where the disjoint union is over a choice of representatives $y_i$ for the $G_{\lambda}$-orbits in $\cI(\lambda)$, and $K_{\lambda,y_i} = G_{\lambda}{}^{\Ad y_i}$. In particular,
\begin{equation*}
	\Perv(\cY(\lambda), G_{\lambda}) = \bigoplus_i \Perv(\cP_{\lambda}, K_{\lambda,y_i}).
\end{equation*}

Now let $\nu \in \fh^*$ be a dominant regular infinitesimal character. Let $\lambda, \mu \in \fh^*$ be dominant and possibly singular infinitesimal characters so that $\lambda, \mu$ and $\nu$ are in the same coset of the character lattice. Let $W_\lambda$, $W_\mu$ denote the stabilizers of $\lambda$ and $\mu$ in $W$. Assume that $\nu$ is regular and that $\lambda$ is at least as singular as $\mu$, i.e. $W_\lambda \supseteq W_\mu \supseteq W_\nu = 1$. We then have perfect pairings $\langle -, - \rangle_\lambda$, $\langle -, - \rangle_\mu$ and $\langle -, - \rangle_\nu$. There are natural maps 
\begin{equation*}
	\begin{tikzcd}[row sep=small, column sep=small]
		\cP_\nu \ar[dr, two heads, "p_\mu"] \ar[dd, two heads, "p_\lambda"'] \\
		& \cP_\mu \ar[dl, two heads, "\bar p_\lambda"]\\
		\cP_\lambda
	\end{tikzcd}
\end{equation*} 
and hence functors
\begin{equation*}
	\begin{tikzcd}
		\Rep_{[\nu]}(\check G_{\BR,x})
		\ar[dd, "T_\nu^\lambda", xshift=8pt] \ar[dr, "T_\nu^\mu", xshift=5pt, yshift=5pt]
		&& \Perv(\cP_\nu, K_{\nu,y_i})
		\ar[dd, "p_{\lambda*}", xshift=8pt] \ar[dr, "p_{\mu*}", xshift=5pt, yshift=5pt] \\
		&\Rep_{[\mu]}(\check G_{\BR,x})
		\ar[dl, "T_\mu^\lambda", xshift=5pt, yshift=-5pt] \ar[ul, "T_\mu^\nu"]
		&& \Perv(\cP_\mu,K_{\mu,i})
		\ar[dl, "\bar p_{\lambda*}", xshift=5pt, yshift=-5pt] \ar[ul, "p_\mu^*"]\\
		\Rep_{[\lambda]}(\check G_{\BR,x})
		\ar[uu, "T_\lambda^\nu"] \ar[ur, "T_\lambda^\mu"]
		&& \Perv(\cP_\lambda, K_{\lambda,i})
		\ar[uu, "p_\lambda^*"] \ar[ur, "\bar p_\lambda^*"]
	\end{tikzcd}
\end{equation*}
Here the subscript $(-)_{[\nu]}$ denotes generalized infinitesimal character $\nu$, similarly for $\mu$ and $\lambda$. Note that the Grothendieck groups of $\Rep_{[\nu]}(\check G_{\BR,x})$ and $\Rep_{\nu}(\check G_{\BR,x})$ are the same. Also, since $\lambda$, $\mu$ and $\nu$ have the same integrality, $K_{\lambda,i} = K_{\mu,i} = K_{\nu,i}$.

\begin{proposition}[Proposition \ref{prop:adjoint-of-translation}]\label{prop:sing_to_sing}
	Consider the Grothendieck groups over $\BC$ (i.e. we apply $- \otimes_\BZ \BC$ to the Grothendieck groups). Along the perfect pairings $\langle -, - \rangle_\lambda$ and $\langle -, - \rangle_\mu$, $T_\lambda^\mu$ (resp. $T_\mu^\lambda$) is adjoint to $\bar p_{\lambda*}$ (resp. $\bar p_\lambda^*$), i.e.
	\begin{equation*}
		\langle T_\lambda^\mu [M], [\cF] \rangle_\mu = \langle [M], \bar p_{\lambda*} [\cF] \rangle_\lambda \quad
		(\text{resp. } \langle T_\mu^\lambda [M], [\cF] \rangle_\lambda = \langle [M], \bar p_\lambda^* [\cF] \rangle_\mu).
	\end{equation*}
\end{proposition}

The statement about $T_\mu^\lambda$ is known: it is enough to check this on simple or standard objects, in which case this follows from \cite[16.6]{ABV}. The goal is to prove the statement about $T_\lambda^\mu$.

\subsubsection{Weyl group actions}\label{subsec: W-actions}

Write $W_{int}$ for the integral Weyl group of $\lambda$, $\mu$, and $\nu$, and write $\ell_{int}$ for the length function on $W_{int}$. Suppose $\nu$ is regular integral dominant. The two Grothendieck groups in (\ref{eqn:LLC/R}) admit actions of $W_{int}$: it acts on $K \Rep_\nu(\check G_{\BR,x})$ by coherent continuation, and on $K \Perv(\cP_\nu, K_{\nu,i}) = K \Perv(\cB_\nu, K_{\nu,i})$ by convolution (here $\cB_\nu = G_\nu/ B_\nu$ is the full flag variety of $G_\nu$, where $B_\nu = B \cap G_\nu$ is a Borel). Under the perfect pairing (\ref{eqn:LLC/R}), we have
\begin{equation}\label{eqn:W-action-adjoint}
	\langle w \cdot [M], [\cF] \rangle = \langle [M], (-1)^{\ell_{int}(w)} w \cdot [\cF] \rangle
\end{equation}
\cite[14.9.(b)]{Vogan:IC4}, \cite[17.16]{ABV}. Let us describe these actions more precisely. On the representation side, any representation $M$ determines a coherent family $\Theta_M: \nu + X^* \to K \Rep_\nu(\check G_{\BR,x})$ with $\Theta_M(\nu) = [M]$, where $X^* \subset \fh^*$ denotes the lattice of characters of a torus in $\check G$. Then $w \in W_{int}$ acts on $[M]$ by
\begin{equation*}
	w \cdot [M] = w \cdot \Theta_M(\nu) := \Theta_M( w\inv \nu)
\end{equation*}
\cite[7.2.28]{Vogan:book}. On the parameter side, 
we have a convolution product
\begin{equation}\label{eq: convolution}
	D^b(G_\nu/ B_\nu, K_{\nu,i}) \times D^b(G_\nu/ B_\nu, B_\nu) \aro D^b(G_\nu/ B_\nu, K_{\nu,i}),
\end{equation}
\begin{equation*}
	(\cF, \cG) \mapsto \cF \star \cG
\end{equation*}
see \cite[\S 7.2]{Achar:book}. For $w \in W_{int}$, let $\cM_w$ for the $!$-direct image to $G_\nu/B_\nu$ of the perverse sheaf $\underline{\BC}_{B_\nu w B_\nu/B_\nu}[\ell_{int}(w)]$ on the Schubert cell $B_\nu w B_\nu/B_\nu$. Then $w$ acts on any $[\cF] \in K D^b(G_\nu/ B_\nu, K_{\nu,i})$ by convolution with $\cM_{w\inv}$:
\begin{equation}\label{eq: convolve with std}
	w \cdot [\cF] := [\cF \star \cM_{w\inv}].
\end{equation}
Let $C_w$ denote the Kazhdan-Lusztig basis elements in the Hecke algebra $\cH_{int}$ of $W_{int}$ \cite{KL:Hecke}. Then the element $C_w|_{q=1}$ in the group algebra $\cH_{int}|_{q=1} = \BZ[W_{int}]$ acts by convolution with $\cL_{w\inv}$, where $\cL_{w\inv}$ is the unique simple $B_\nu$-equivariant perverse sheaf supported on $\overline{B_\nu w\inv B_\nu/B_\nu}$.

\begin{lemma}\label{lem:conv-with-Cw}
	Let $w_\lambda \in W_\lambda$ be the longest element. We have an equality
	\begin{equation*}
		(-1)^{\ell_{int}(w_\lambda)} p_\lambda^* p_{\lambda*} = - \star \cL_{w_\lambda} = C_{w_\lambda}|_{q=1} \cdot -
	\end{equation*}
	as operators on $K \Perv( \cB_\nu)$.
\end{lemma}

\begin{proof}
	The proof is almost identical to the case where $w_\lambda$ is a simple reflection \cite[Lemma 7.2.8]{Achar:book}, and we omit it.
\end{proof}

\subsubsection{Reduction to $\mu$ regular}

\begin{lemma}\label{lem:red-to-reg}
	Suppose Proposition \ref{prop:sing_to_sing} holds in the cases where $\mu$ is regular. Then it holds for general $\mu$.
\end{lemma}

\begin{proof}	
	We first need the following equations
	\begin{align*}
		p_{\lambda*} &= \bar p_{\lambda*} p_{\mu*},\\
		T_\lambda^\nu &= T_\mu^\nu T_\lambda^\mu.
	\end{align*}
	The first one is because $p_\lambda = \bar p_\lambda \circ p_\mu$. For the second one, recall first that $T_\nu^\lambda = T_\mu^\lambda T_\nu^\mu$, since 
	these are translations within the same closed Weyl chamber all going deeper into walls. 
	Then by using the adjunction between translation functors, we obtain
	\begin{align*}
		\Hom_{\check \fg}(-,T_\lambda^\nu-)
		&= \Hom_{\check \fg}(T_\nu^\lambda -,-)
		= \Hom_{\check \fg}( T_\mu^\lambda T_\nu^\mu -,-)\\
		&= \Hom_{\check \fg}(T_\nu^\mu -, T_\lambda^\mu -)
		= \Hom_{\check \fg}(-, T_\mu^\nu T_\lambda^\nu -)
	\end{align*}
	which proves $T_\lambda^\nu = T_\mu^\nu T_\lambda^\mu$.
	
	By assumption of the lemma, we have the following equalities
	\begin{align}
		\langle T_\lambda^\nu [M], [\cF] \rangle  &= \langle [M], p_{\lambda*} [\cF] \rangle \label{eqn:lambda-nu}\\
		\langle T_\mu^\nu [M], [\cF] \rangle &= \langle [M], p_{\mu*} [\cF] \rangle.
	\end{align}
	Therefore
	\begin{align*}
		\langle [M], \bar p_{\lambda*} p_{\mu*} \cF \rangle_\lambda 
		&= \langle [M], p_{\lambda*} [\cF] \rangle_\lambda \\
		&= \langle T_\lambda^\nu [M], [\cF] \rangle_\nu \\
		&= \langle T_\mu^\nu T_\lambda^\mu [M], [\cF] \rangle_\nu\\
		&= \langle T_\lambda^\mu [M], p_{\mu*} [\cF] \rangle_\mu.
	\end{align*}
	Hence we have
	\begin{equation}\label{eqn:lambda-mu}
		\langle T_\lambda^\mu [M], [\cF'] \rangle_\mu = \langle [M], \bar p_{\lambda*} [\cF'] \rangle_\lambda
	\end{equation} 
	provided that $[\cF'] \in K \Perv(\cP_\mu, K_{\mu,i})$ is in the span of objects of the form $[p_{\mu*} \cF]$, $\cF \in \Perv(\cP_\nu, K_{\nu,i})$. 
	
	\begin{claim}\label{clm:push-surj}
		Up to a constant, any costandard object in $K \Perv(\cP_\mu, K_{\mu,i})$ is of the form $[p_{\mu*} \cF]$
	\end{claim}
	
	Since our $K$-groups are now over $\BC$, these scaled costandard objects in $K \Perv(\cP_\mu, K_{\mu,i})$ form a basis, and hence (\ref{eqn:lambda-mu}) holds for any $[M]$ and $[\cF]$. This completes the proof of the lemma.
\end{proof}

\begin{proof}[Proof of Claim \ref{clm:push-surj}]	
	We give two proofs of the claim. 
	
	The first one goes as follows. The claim is equivalent to saying that $p_{\mu*}$ is surjective on the Grothendieck group. By assumption of lemma, $p_{\mu*}$ is adjoint to $T_\mu^\nu$, and hence $p_{\mu*}$ is surjective if and only if $T_\mu^\nu$ is injective. This is true if we restrict to the Grothendieck group of category $\cO$, see \cite[Lemma 2.5(1)]{Backelin:Koszul}. There exists an embedding of the Grothendieck group Harish-Chandra modules into a direct sum of several copies of Grothendieck groups of category $\cO$ \cite[Lemma 4.11]{BMSZI}. This embedding intertwines tensor products with finite dimensional representations and taking isotypic part of generalized infinitesimal characters. Hence $T_\mu^\nu$ is injective on Harish-Chandra modules as well.
	
	We now provide a geometric proof. To ease notations, we temporarily drop the subscript $(-)_\mu$ in various objects, and write $K_{\mu,i}$ simply as $K$. Any costandard object is of the form $i_{Q*} \tau[\dim Q]$ where $i_Q: Q \inj \cP$ is a $K$-orbit and $\tau$ is a $K$-equivariant local system on $Q$. Let $\tilde Q = p_\mu\inv(Q)$, and define maps
	\begin{equation*}
		\begin{tikzcd}
			\tilde Q \ar[d, "q_\mu"'] \ar[r, "i_{\tilde Q}"]
			& \cB \ar[d, "p_\mu"]\\
			Q \ar[r, "i_Q"] 
			& \cP
		\end{tikzcd}.
	\end{equation*}
	We would like to show that $q_{\mu*} q_\mu^* \tau = \tau \otimes_\BC H^{*}(P/ B)$ where $K$ acts trivially on $H^*(P/ B)$. Assuming this for the moment, then we have
	\begin{align*}
		(i_{Q*} \tau) \otimes_\BC H^{*}(P/ B)
		&= i_{Q*}(\tau \otimes_\BC H^{*}(P/ B))\\
		&= i_{Q*} q_{\mu*} q_\mu^* \tau = p_{\mu*} i_{\tilde Q*} (q_\mu^*\tau),
	\end{align*}
	and therefore in the Grothendieck group we have
	\begin{align*}
		n \cdot [ i_{Q*} \tau] = [p_{\mu*} i_{\tilde Q*}(q_\mu^* \tau)]
	\end{align*}
	where $n = \sum_{i \in \BZ} (-1)^i \dim H^i(P/ B) \neq 0$ (note that $n>0$ since $P/ B$ has no odd cohomology). This proves the claim.
	
	It remains to show $q_{\mu*} q_\mu^* \tau = \tau \otimes_\BC H^*(P/ B)$. The map $\tilde Q \to Q$ can be described as the map on induced spaces 
	\begin{equation*}
		q_\mu: K \times_{ K_z} p_\mu\inv(z) \aro K \times_{K_z} \{z\}
	\end{equation*}
	induced by $a_\mu: p_\mu\inv(z) \to \{z\}$, where $z \in Q$ is any point and $K_z$ is the stabilizer of $z$ in $K$. Hence, writing $\tau_z$ for the stalk of $\tau$ at $z$, we have $q_\mu^* \tau = K \times_{K_z} a_\mu^* \tau_z = K \times_{K_z} (\tau_z \otimes \underline{\BC}_{p_\mu\inv(z)})$. By smooth base change, $q_{\mu*} q_\mu^* \tau = K \times_{K_z} a_{\mu*} (\tau_z \otimes \underline{\BC}_{p_\mu\inv(z)}) = K \times_{K_z} (\tau_z \otimes a_{\mu*}\underline{\BC}_{p_\mu\inv(z)}) = K \times_{K_z} (\tau_z \otimes H^*(p_\mu\inv(z)))$. Finally, notice that the $K_z$-action on $H^*(p_\mu\inv(z))$ is trivial. Indeed, the $K_z$-action factors through the inclusion map $K_z = K \cap P_z \subset P_z$ where $P_z \subset G$ is the parabolic corresponding to the point $z$. Since $P_z$ is connected, there is no nontrivial $P_z$-equivariant perverse sheaf on a point. In particular, the action $P_z \acts H^*(p_\mu\inv(z))$ is trivial, whence so is the action of $K_z$. Consequently
	\begin{equation*}
		p_{\mu*} p_\mu^* \tau = \Big( K \times_{K_z} \tau_z \Big) \otimes H^*(p_\mu\inv(z)) = \tau \otimes H^*(p_\mu\inv(z)).
	\end{equation*}
	Since $p_\mu\inv(z) \cong P/ B$ as varieties, this is equal to $\tau \otimes_\BC H^*(P/ B)$, as desired.
\end{proof}

\subsubsection{Proof in the regular case}

We start the proof of Proposition \ref{prop:sing_to_sing}.
By Lemma \ref{lem:red-to-reg}, it suffices to consider the case where $\mu = \nu$ is regular. 

\begin{lemma}
	Proposition \ref{prop:sing_to_sing} holds for regular $\mu=\nu$ if the following equation holds
	\begin{equation}\label{eqn:trans-vs-pushpull}
		\langle T_\lambda^\nu T_\nu^\lambda [M], [\cF] \rangle_\nu
		= \langle [M], p_\lambda^* p_{\lambda*} [\cF] \rangle_\nu
	\end{equation}
	for any $[M]$ and $[\cF]$.
\end{lemma}

\begin{proof}
	From the assumption we have 
	\begin{align*}
		\langle T_\lambda^\nu T_\nu^\lambda [M], [\cF] \rangle
		&= \langle [M], p_\lambda^* p_{\lambda*} [\cF] \rangle\\
		&= \langle T_\nu^\lambda [M], p_{\lambda*} [\cF] \rangle
	\end{align*}
	where the second equation is because $T_\nu^\lambda$ and $p_\lambda^*$ are adjoint. Hence
	\begin{equation}\label{eqn:lambda-nu-b}
		\langle T_\lambda^\nu [M'], [\cF] \rangle = \langle [M'], p_{\lambda*} [\cF] \rangle
	\end{equation}
	whenever $[M']$ is in the span of classes of the form $[T_\nu^\lambda M]$. However, every simple $M'$ is of this form: every simple module at a singular infinitesimal character $\lambda$ is the translation of a unique simple module at the regular infinitesimal character $\nu$. Since simple objects form a basis for the Grothendieck group. Hence (\ref{eqn:lambda-nu-b}) holds for all $[M']$, which proves \ref{prop:sing_to_sing} for regular $\mu = \nu$.
\end{proof}

Now we want to write both $T_\lambda^\nu T_\nu^\lambda$ and $p_\lambda^* p_{\lambda*}$ in terms of the Weyl group actions described in \textsection \ref{subsec: W-actions}. Recall that $W_\lambda$ denotes the stabilizer of $\lambda$ in $W$, and $w_\lambda \in W_\lambda$ is the longest element. By Lemma \ref{lem:conv-with-Cw}, $p_\lambda^* p_{\lambda*}$ agrees with the action of $(-1)^{\ell_{int}(w_\lambda)} C_{w_\lambda}|_{q=1}$. 

\begin{lemma}
	We have an equality
	\begin{equation*}
		T_\lambda^\nu T_\nu^\lambda = \sum_{w \in W_\lambda} w
	\end{equation*}
	as operators on $K \Rep_\nu(\check G_{\BR,x})$.
\end{lemma}

\begin{proof}
	This is \cite[Proposition 7.3.2]{Vogan:book}.
\end{proof}

As a result, (\ref{eqn:trans-vs-pushpull}) holds if we have
\begin{equation*}
	\langle \sum_{w \in W_\lambda} w \cdot [M] , [\cF] \rangle = \langle [M], (-1)^{\ell_{int}(w_\lambda)} C_{w_\lambda}|_{q=1} \cdot [\cF] \rangle.
\end{equation*}
To establish this equation, recall that 
\begin{equation*}
	C_{w_\lambda}|_{q=1}  
	= \sum_{w \in W_\lambda} (-1)^{\ell_{int}(w_\lambda) -\ell(w)} w
\end{equation*}
or equivalently
\begin{equation*}
	(-1)^{\ell_{int}(w_\lambda)} C_{w_\lambda}|_{q=1} 
	= \sum_{w \in W_\lambda} (-1)^{\ell_{int}(w)} w
\end{equation*}
in the group algebra $\BZ[W_{int}]$ (this follows from, for example, the BGG resolution). Therefore, using (\ref{eqn:W-action-adjoint}),
\begin{align*}
	\langle \sum_{w \in W_\lambda} w \cdot [M] , [\cF] \rangle
	&= \sum_{w \in W_\lambda} \langle w \cdot [M], [\cF] \rangle\\
	&= \sum_{w \in W_\lambda} \langle [M], (-1)^{\ell_{int}(w)} w \cdot [\cF] \rangle\\
	&= \langle [M], \sum_{w \in W_\lambda} (-1)^{\ell_{int}(w)} w \cdot [\cF] \rangle\\
	&= \langle [M], (-1)^{\ell_{int}(w_\lambda)} C_{w_\lambda}|_{q=1} \cdot [\cF] \rangle,
\end{align*}
as required. This completes the proof of Proposition \ref{prop:sing_to_sing}.

\subsection{Factorization of translation functors}\label{sec:factor}

We now restrict our attention to $G_\BR = \GL_n(\BC)$, $G_\BC = \GL_n(\BC) \times \GL_n(\BC)$. We are interested in the factorization of certain translation functors applied to virtual representations of $G_\BR$. We employ the language of coherent families, following \cite[Chapter 7]{Vogan:book}.

We identify the Cartan $\fh_\BC$ of $\fg_\BC$ with $\BC^{2n}$, and write $e_1,\ldots, e_n, f_1,\ldots,f_n$ for the coordinate functions. We identify the Weyl group $W$ with the symmetric group $S_n \times S_n$ by letting $(w,1) \cdot e_i := e_{w(i)}$ and $(1,w) \cdot f_i := f_{w(i)}$. We write $W_\lambda$ for the stabilizer of $\lambda \in \fh_\BC^*$ in $W$, and write $X^* \subset \fh_\BC^*$ for the character lattice.

\begin{lemma}\label{lem:translation_by_coherent_family}
	Suppose $\lambda \in \fh_\BC^*$ is integral and
	\begin{equation*}
		\mu = \lambda - (e_{j_1} + \cdots + e_{j_r}), \quad 1 \le j_1 < \cdots < j_r \le n.
	\end{equation*}
	For any coherent family $\Theta$ over $X^*$,
	\begin{equation*}
		T^{\mu}_{\lambda} (\Theta(\lambda)) = \sum_{s \in W_{\lambda}/W_{\lambda} \cap W_{\mu}} \Theta(s(\mu)).
	\end{equation*}
\end{lemma}

\begin{proof}
	Let $V$ be 
	the irreducible representation of $G_\BC$ of extremal weight $\nu_0 := e_{j_1} + \cdots + e_{j_r}$, viewed as a representation of $G_\BR$ by restriction. 
	Note the set of weights $wt(V)$ in $V$ coincides with the Weyl group orbit $W \cdot \nu_0$. Hence all weights of $V$ have multiplicity one. By the definition of coherent family, we have
	\begin{equation*}
	V^{\vee} \otimes \Theta(\lambda) = \sum_{\nu' \in wt(V^\vee)}  \Theta(\lambda+ \nu') = \sum_{\nu \in wt(V)}  \Theta(\lambda - \nu).
	\end{equation*}
	Then 
	\begin{equation*}
	T^{\mu}_{\lambda} (\Theta(\lambda)) = \sum_{\substack{\nu \in wt(V): \\ \lambda - \nu \in W \cdot \mu }}  \Theta(\lambda - \nu).
	\end{equation*}
	It is not hard to see that for any $\nu \in wt(V)$, $\lambda - \nu \in W \cdot \mu$ if and only if $\nu \in W_{\lambda} \cdot \nu_{0}$. Moreover, if $s_1(\nu_0) = s_2(\nu_0)$ for $s_1, s_2 \in W_{\lambda}$, then $s_2^{-1}s_1 \in W_{\mu} \cap W_{\lambda}$. Therefore,
	\begin{equation*}
	T^{\mu}_{\lambda} (\Theta(\lambda)) = \sum_{s \in W_{\lambda}/W_{\lambda} \cap W_{\mu}} \Theta(\lambda - s(\nu_0)) = \sum_{s \in W_{\lambda}/W_{\lambda} \cap W_{\mu}} \Theta(s(\lambda - \nu_0)) = \sum_{s \in W_{\lambda}/W_{\lambda} \cap W_{\mu}} \Theta(s(\mu))
	\end{equation*}
	which is the equality we want.
\end{proof}

\begin{lemma} 
	Let $F_0$ be a facet of $\fh_\BC^{*}$ and $F_1, F_2$ be two facets in the closure of $F_0$. Suppose $\lambda_i \in F_i$ are integral. For any coherent family $\Theta$ over $X^*$, we have
	\begin{equation}\label{eqn:trans-srs}
		T^{\lambda_2}_{\lambda_0} \circ T^{\lambda_0}_{\lambda_1}(\Theta(\lambda_1))= |W_{\lambda_1} \cap W_{\lambda_2}/W_{\lambda_0}| \sum_{s \in W_{\lambda_1}/W_{\lambda_1} \cap W_{\lambda_2}} \Theta(s(\lambda_2))
	\end{equation}
\end{lemma}

\begin{proof}
	By a slight modification of \cite[Proposition 7.2.22(b)]{Vogan:book}, 
	\begin{equation*}
		T^{\lambda_0}_{\lambda_1}(\Theta(\lambda_1))
		= \sum_{s \in W_{\lambda_1}/W_{\lambda_0}} \Theta(s \lambda_0).
	\end{equation*}
	Moreover, by Lemma 7.3.1 and Proposition 7.2.22(a) of \textit{loc. cit.},
	\begin{equation*}
		T_{\lambda_0}^{\lambda_2}(\Theta(s \lambda_0))
		= T_{s\lambda_0}^{s\lambda_2}(\Theta( s \lambda_0)) 
		= \Theta(s \lambda_2).
	\end{equation*}
	Therefore
	\begin{equation*}
		T^{\lambda_2}_{\lambda_0} \circ T^{\lambda_0}_{\lambda_1}(\Theta(\lambda_1))
		= \sum_{s \in W_{\lambda_1}/W_{\lambda_0}} \Theta(s \lambda_2).
	\end{equation*}
	Since $\Theta(s \lambda_2) = \Theta(st\lambda_2)$ for any $t \in W_{\lambda_1} \cap W_{\lambda_2}$, the right hand side is equal to the right side of (\ref{eqn:trans-srs}). 
\end{proof}

\begin{proposition}\label{prop:2nd_decomposition}
	Suppose $\lambda = (\lambda_L, \lambda_R) \in \fh_\BC^*$ is integral dominant. Suppose $\lambda_{L,j} = \cdots = \lambda_{L,j+r-1} > \lambda_{L,j+r}$. Write 
	\begin{equation*}
		\lambda' = \lambda - (e_j + \cdots + e_{j+r-1}),\quad
		\lambda'' = \lambda + (e_1 + \cdots + e_{j-1}).
	\end{equation*}
	Then
	\begin{equation*}
		T^{\lambda'}_{\lambda} = T^{\lambda'}_{\lambda''} \circ T^{\lambda''}_{\lambda}.
	\end{equation*}
\end{proposition}

Note that $\lambda$, $\lambda'$ and $\lambda''$ are all dominant, and we have $W_{\lambda''} = W_\lambda \cap W_{\lambda'}$. 

\begin{proof}
	Let $\Theta$ be any coherent family over $X^*$. By Lemma~\ref{lem:translation_by_coherent_family},
	\begin{equation*}
		T^{\lambda'}_{\lambda} (\Theta(\lambda)) = \sum_{s \in W_{\lambda}/W_{\lambda} \cap W_{\lambda'}} \Theta(s(\lambda')).
	\end{equation*}
	Apply the preceeding lemma to $\lambda_0 = \lambda''$, $\lambda_1 = \lambda$ and $\lambda_2 = \lambda'$, we obtain
	\begin{equation*}
		\sum_{s \in W_{\lambda}/W_{\lambda} \cap W_{\lambda'}} \Theta(s(\lambda'))
		= \frac1{|W_\lambda \cap W_{\lambda'}/ W_{\lambda''}|} (T_{\lambda''}^{\lambda'} \circ T_{\lambda}^{\lambda''})(\Theta(\lambda)).
	\end{equation*}
	But by construction $W_{\lambda''} = W_\lambda \cap W_{\lambda'}$, so $|W_\lambda \cap W_{\lambda'}/ W_{\lambda''}| = 1$. This completes the proof.
\end{proof}


\printbibliography

\end{document}